\def\C{{\mathbb C}}
\def\P{{\mathbb P}}
\def\R{{\mathbb R}}
\def\N{{\mathbb N}}
\def\T{{\mathbb T}}
\def\ZZ{\mathcal{Z}}
\def\bx{{\boldsymbol{x}}}
\def\by{{\boldsymbol{y}}}
\def\bz{{\boldsymbol{z}}}
\def\bv{{\boldsymbol{v}}}
\def\bq{{\boldsymbol{q}}}
\def\br{{\boldsymbol{r}}}
\def\KK{{\mathcal K}}
\def\NN{{\mathcal N}}
\def\PP{{\mathcal P}}
\newcommand{\dist}[3][]{{\rm d\!l}[\ifthenelse{\equal{#1}{}}{}{#1;}#2,#3]}
\newcommand{\eff}[3][]{{\rm osc}\ifthenelse{\equal{#1}{}}{}{_{#1}}(#2;#3)}
\newcommand{\norm}[3][]{#1\|#2#1\|_{#3}}
\def\set#1#2{\big\{#1\,:\,#2\big\}}
\def\eps{\varepsilon}
\def\normL2#1#2{\|#1\|_{L^2(#2)}}
\newtheorem{algorithm}{Algorithm}
\newtheorem{assumption}{Assumption}
\begin{document}

\begin{abstract}
We use the $H$-matrix technology to compute the approximate square root of a 
covariance matrix in linear cost. This allows us to generate normal and 
log-normal random fields
on general point sets with optimal cost. We derive rigorous error 
estimates which show convergence of the method. Our approach requires only mild assumptions on the covariance function and on the point set. 
Therefore, it might be also a nice alternative to the circulant embedding approach which applies only to regular 
grids and stationary covariance functions.
\end{abstract}
\title{Fast random field generation with $H$-matrices}

%\titlerunning{Short form of title}        % if too long for running head

\author{Michael Feischl \and Frances Y. Kuo \and Ian H. Sloan}

%\authorrunning{Short form of author list} % if too long for running head

\institute{M. Feischl, F.Y. Kuo, I.H. Sloan \at
              School of Mathematics and Statistics, UNSW Sydney, NSW 2052 \\
              Tel.: +61-2-93857076\\
              \email{m.feischl@unsw.edu.au, f.kuo@unsw.edu.au, i.sloan@unsw.edu.au}                  
}

\date{Received: date / Accepted: date}
\maketitle

\section{Introduction}
Generating samples of random fields is a common bottleneck in simulation and modeling of real life phenomena as, e.g., structural vibrations~\cite{ap2}, groundwater flow~\cite{flow},
and composite material behavior~\cite{ap1}.
 A standard approach is to truncate the Karhunen-Lo\`eve expansion of the random  field.
This can, particularly for rough fields with short correlation length, be very expensive, as many summands of the expansion have to be evaluated to compute a decent approximation.
Often, it suffices to evaluate the random field only on some particular (quadrature) nodes. If the random field $\ZZ(\bx,\omega)$ is Gaussian with given covariance function $\varrho(\cdot,\cdot)$, it is well-known that 
the evaluation at the quadrature nodes $\bx_1,\ldots,\bx_N$ can be done by computing the square-root of the corresponding covariance matrix 
$\boldsymbol{C}=(\varrho(\bx_i,\bx_j))_{i,j\in\{1,\ldots,N\}}\in\R^{N\times N}$, i.e.,
\begin{align*}
 \ZZ(\bx_i,\omega)=\big(\boldsymbol{C}^{1/2}\bz(\omega)\big)_i\quad\text{for all }i\in\{1,\ldots,N\},
\end{align*}
where $\bz(\omega)\in \R^N$ is a vector of i.i.d. standard normal random numbers.
Since each evaluation requires a matrix-vector multiplication with $\boldsymbol{C}^{1/2}$, a direct approach requires $\mathcal{O}(N^2)$ operations for the multiplication plus $\mathcal{O}(N^3)$ operations
for 
computing the square-root itself  and thus is
prohibitively expensive. An efficient method first proposed in~\cite{circ1,circ2} is \emph{circulant embedding}, which employs fast FFT techniques to realize the factorization and the 
matrix-vector multiplication in $\mathcal{O}(N\log(N))$
operations.
This approach, however, works solely for stationary covariance functions $\varrho(\bx,\by)=\rho(|\bx-\by|)$ and
regular grids of quadrature nodes. Since non-stationary covariance functions are of great interest for the modeling of natural structures (e.g., porous rock, wood,\ldots), and since finite element methods often
use irregular grids,
we propose a new method which removes both restrictions. 

The idea is to approximate the covariance matrix $\boldsymbol{C}$ by an $H^2$-matrix, as described in, e.g,~\cite{h2mat}, and to use an iterative method to compute an approximation $\ZZ_{k,p}(\bz)$ 
($k$ and $p$ are parameters of the methods, see below) to $\boldsymbol{C}^{1/2}\bz$ for any $\bz\in\R^N$. We therefore obtain the approximation to 
the random field by feeding the algorithms with i.i.d. standard normal
random vectors $\bz(\omega)\in\R^N$, i.e.,
\begin{align*}
 \ZZ(\bx_i,\omega)\approx \ZZ_{k,p}(\bz(\omega))_i\quad\text{for all }i\in\{1,\ldots,N\}.
\end{align*}%
This is feasible since matrix-vector multiplication with $H^2$-matrices can be done in $\mathcal{O}(N)$ operations. The only assumption on the covariance function of the random field
is that it is asymptotically smooth. We propose two iterative algorithms, each with individual advantages for smooth or rough random fields.
This algorithms might also be of interest for the approximation of random fields with covariance kernels of random solutions of certain stochastic operator equations, as considered in~\cite{schwab}.

The idea to use $H$-matrices for random field approximation has already been used indirectly in~\cite{h2KL,harbrecht}, 
where the authors efficiently compute eigenfunctions of the covariance operator by use of $H$-matrix techniques.

%The remainder of this work is organized as follows: Section~\ref{sec:modprob} introduces the problem and states the assumptions on the random field.
%Section~\ref{sec:sample} explains the new methods to generate random fields and derives error estimates and a final
%Section~\ref{sec:numerics} presents some numerical experiments.

\subsection{Notation}
Throughout the text, $\alpha\lesssim\beta$ denotes $\alpha\leq C\beta$ for some generic constant $C>0$ and $\alpha\simeq\beta$ means $\alpha\lesssim\beta$ and $\beta\lesssim\alpha$. The notation $|\cdot|$ has several unambiguous meanings: for vectors, it
denotes the euclidean norm,  while for sets, $|\cdot|$ is the natural measure, which is the Lebesgue measure (volume, area) for continuous sets and the counting measure (cardinality) for finite sets.
The notation $\norm{\cdot}{2}$ is used for the spectral matrix norm and $|\bz|_p:=(\sum_{j=1}^N|\bz_i|^p)^{1/p}$ for all $\bz\in\R^N$ denotes the $\ell_p$-norm.
By $\PP^k$ we denote the set of polynomials of maximal degree $k$. For brevity, we write $|\cdot|:=|\cdot|_2$.
We denote the maximal and minimal eigenvalues of a positive definite and symmetric matrix $\boldsymbol{M}\in\R^{N\times N}$ by
\begin{align*} 
\lambda_{\rm max}(\boldsymbol{M}):=\sup_{\bz\in\R^N\setminus\{0\}}\frac{|\boldsymbol{M}\bz|}{|\bz|}
\quad\text{and}\quad \lambda_{\rm min}(\boldsymbol{M}):=\inf_{\bz\in\R^N\setminus\{0\}}\frac{(\boldsymbol{M}\bz)^T\bz}{|\bz|^2}.
\end{align*}
We denote the $k$-th component of a vector $\bv\in\R^N$  by $\bv_k$, whereas sequences of vectors are denoted by $\bv^1,\bv^2,\ldots$.
\section{Model Problem}\label{sec:modprob}
Let $(\Omega,\Sigma  ,\P)$ be a probability space and let $D\subseteq \R^d$, $d\in\N$ be a Lipschitz domain. We consider a random
field  which is normal or log-normal,
\begin{align*}
 \ZZ(\bx,\omega)\quad\text{or}\quad \exp(\ZZ(\bx,\omega))\quad\text{for all }\omega\in\Omega,\,\bx\in D
\end{align*}
for some zero-mean Gaussian random field $\ZZ(\cdot,\cdot)$ (note that the assumption on the mean is purely for brevity of presentation). The covariance 
function $\varrho\colon D\times D\to \R$ of $\ZZ(\cdot,\cdot)$ is assumed asymptotically smooth: that is, $\varrho\in C^\infty\big(\set{(\bx,\by)\in D\times D}{\bx\neq \by}\big)$ and
there
exist constants $c_1,c_2>0$ such that
\begin{align}\label{eq:as}
|\partial_\bx^\alpha \partial_\by^\beta \varrho(\bx,\by)|\leq 
c_1(c_2|\bx-\by|)^{-|\alpha|_1-|\beta|_1}|\alpha+\beta|_1!\quad\text{for all }\bx\neq \by\in 
D,
\end{align}
for all multi-indices $\alpha,\beta\in\N_0^d$ with $|\alpha|_1+|\beta|_1\geq 1$. 
(The expert reader will notice that the original definition of asymptotically smooth 
includes a singularity order. As our
covariance functions are always finite in value, we do not consider this.)
The goal of this work is to derive an efficient method which evaluates the random field at certain (quadrature) points $\NN\subseteq D$, where $\NN=\{\bx_1,\ldots,\bx_N\}$ is a finite set, i.e., we aim to approximate
\begin{align*}
 \Big(\ZZ(\bx,\omega)\Big)_{\bx\in\NN}\in\R^N \quad\text{or}\quad  \Big(\exp(\ZZ(\bx,\omega))\Big)_{\bx\in\NN}\in\R^N
\end{align*}
for given $\omega\in\Omega$.
\subsection{Examples of valid covariance functions}
The condition above includes the important class of isotropic  stationary 
covariance functions of Mat\'ern form, e.g.,
\begin{align}\label{eq:matern}
 \varrho(\bx,\by)= \sigma^2\frac{2^{1-\mu}}{\Gamma(\mu)}\Big(\sqrt{2\mu}\frac{|\bx-\by|_p}{\lambda}\Big)^\mu K_\mu\Big(\sqrt{2\mu}\frac{|\bx-\by|_p}{\lambda}\Big),
\end{align}
where $\Gamma(\cdot)$ is the gamma function, $K_\mu$ is the modified Bessel function of second kind, and $\lambda,\sigma>0$, $\mu\in (0,\infty]$, $p\in\N$ are parameters. 
For $\mu=1/2$, the above function takes the form
\begin{align*}
 \varrho(\bx,\by)=\sigma^2\exp\Big(-\frac{|\bx-\by|_p}{\lambda}\Big)
\end{align*}
and the limit case $\mu=\infty$ satisfies
\begin{align*}
 \varrho(\bx,\by)=\sigma^2\exp\Big(-\frac{|\bx-\by|_p^{2}}{2\lambda^2}\Big).
\end{align*}

Also much more general non-stationary, non-isotropic covariance functions, e.g., 
\begin{align}\label{eq:nonstat}
 \varrho(\bx,\by):=\sigma^2\frac{{\rm det}(\boldsymbol{\Sigma}_\bx)^{1/4}{\rm 
det}(\boldsymbol{\Sigma}_\by)^{1/4}}{\sqrt{2}{\rm det}(\boldsymbol{\Sigma}_\bx+\boldsymbol{\Sigma}_\by)^{1/2}} \exp\Big( 
-(\bx-\by)^T\frac{(\boldsymbol{\Sigma}_\bx+\boldsymbol{\Sigma}_\by)^{-1}}{2}(\bx-\by)\Big).
\end{align}
satisfy the assumptions.
Here, $\boldsymbol{\Sigma}_{(\cdot)}\colon D\to \R^{d\times d}$ is a smooth mapping into the  
symmetric positive definite matrices and $\sigma>0$ is a parameter. This 
covariance function was first suggested in~\cite{cov}
to model spatially dependent anisotropies in a material. 

\begin{lemma}\label{lem:cov}
 The covariance functions from~\eqref{eq:matern} 
satisfy~\eqref{eq:as}. Assume the mapping $\bx\mapsto \boldsymbol{\Sigma}_\bx$ satisfies (for any matrix norm $\norm{\cdot}{}$)
\begin{align}\label{eq:boundderiv}
 \sup_{\alpha\in \N^d}\sup_{\bx\in D}\norm{\partial_\bx^\alpha \boldsymbol{\Sigma}_\bx}{}<\infty.
\end{align}
Then, the covariance function from~\eqref{eq:nonstat} is asymptotically smooth~\eqref{eq:as}.
\end{lemma}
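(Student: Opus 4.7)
My plan is to treat the two statements separately.

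For the Mat\'ern family I would reduce to a one-dimensional estimate on $g(t):=\tfrac{2^{1-\mu}}{\Gamma(\mu)}t^\mu K_\mu(t)$, noting that $\varrho(\bx,\by)=\sigma^2 g(t)$ with $t=\sqrt{2\mu}|\bx-\by|_p/\lambda$. The target bound is $|g^{(k)}(t)|\le C_1(C_2 t)^{-k}k!$ for $t>0$ and $k\ge 1$. For $\mu=1/2$ this is immediate ($g=e^{-t}$), for $\mu=\infty$ (Gaussian) it follows from Cram\'er-type bounds on Hermite polynomials, and for general $\mu>0$ I would differentiate under the integral in $K_\mu(t)=\int_0^\infty e^{-t\cosh s}\cosh(\mu s)\,ds$, using $\sup_{u\ge 0} u^k e^{-tu}=(k/t)^k e^{-k}\lesssim t^{-k}k!$. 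From there, Fa\`a di Bruno applied to $\varrho=\sigma^2 g\circ t(\cdot,\cdot)$, combined with the standard scaling $|\partial_\bx^\alpha\partial_\by^\beta t|\lesssim|\bx-\by|_p^{1-|\alpha+\beta|_1}$ valid for $|\alpha+\beta|_1\ge 1$, recovers \eqref{eq:as}: each $k$-part partition contributes $t^{-k}k!\cdot|\bx-\by|_p^{k-|\alpha+\beta|_1}$, so the $t^{-k}$ and $|\bx-\by|_p^{k}$ cancel and the bound collapses to $|\bx-\by|_p^{-|\alpha+\beta|_1}|\alpha+\beta|_1!$ (times constants), with the norm equivalence $|\cdot|_p\simeq|\cdot|_2$ absorbed into $c_1,c_2$.

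For the non-stationary kernel I would factor $\varrho(\bx,\by)=A(\bx,\by)\,e^{-Q(\bx,\by)}$ with
\[
A(\bx,\by)=\sigma^2\frac{\det(\boldsymbol{\Sigma}_\bx)^{1/4}\det(\boldsymbol{\Sigma}_\by)^{1/4}}{\sqrt 2\,\det(\boldsymbol{\Sigma}_\bx+\boldsymbol{\Sigma}_\by)^{1/2}},\qquad Q(\bx,\by)=\tfrac12(\bx-\by)^\top(\boldsymbol{\Sigma}_\bx+\boldsymbol{\Sigma}_\by)^{-1}(\bx-\by).
\]
Since $\boldsymbol{\Sigma}_{(\cdot)}$ is continuous and pointwise symmetric positive definite on the bounded Lipschitz domain $D$, its eigenvalues are bounded above and uniformly below by some $\lambda_0>0$. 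Combined with \eqref{eq:boundderiv}, this lets me expand $(\boldsymbol{\Sigma}_\bx+\boldsymbol{\Sigma}_\by)^{-1}$ as a Neumann series around any reference point and differentiate term by term, yielding a uniform bound $|\partial_\bx^\alpha\partial_\by^\beta(\boldsymbol{\Sigma}_\bx+\boldsymbol{\Sigma}_\by)^{-1}|\le c_1 c_2^{|\alpha+\beta|_1}|\alpha+\beta|_1!$ on $D\times D$. Jacobi's formula together with the analyticity of $\log\det$ on the SPD cone gives the analogous bound for $\det(\cdot)^{\pm 1/2}$. Leibniz for the product $A\cdot e^{-Q}$ and Fa\`a di Bruno for the exponential then transfer these Gevrey-1 bounds to $\varrho$ itself. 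Since $\varrho$ is smooth up to the diagonal here, the factor $(c_2|\bx-\by|)^{-|\alpha+\beta|_1}$ in \eqref{eq:as} is automatic on any bounded $D$.

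The main obstacle is the combinatorial control of Fa\`a di Bruno in the non-stationary case, where $(\boldsymbol{\Sigma}_\bx+\boldsymbol{\Sigma}_\by)^{-1}$ depends non-polynomially on its arguments. The Neumann-series route above is the cleanest way I see to obtain the required factorial bounds, and it yields an explicit constant $c_2$ of size roughly $\lambda_0^{-1}$ times the constant in \eqref{eq:boundderiv}. For the Mat\'ern case the analogous hurdle is the uniform $k!$ growth bound on $K_\mu^{(k)}$ for non-integer $\mu$, which the integral representation handles cleanly.
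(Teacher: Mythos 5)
Your route is viable and genuinely different from the paper's. For the Mat\'ern family the paper does not work with one-dimensional bounds on the radial profile at all: it extends $\varrho$ to complex arguments by replacing $|\bx-\by|_p$ with $d(\bx-\by)=(\sum_i(\bx_i-\by_i)^p)^{1/p}$, shows holomorphy in each variable on a complex cone (Lemmas~\ref{lem:holo} and~\ref{lem:man}), and obtains the factorial bounds in one shot from Cauchy's integral formula with radius $\simeq|\bx-\by|$, extending from the positive orthant by the sign-flip symmetry; your approach instead proves a Gevrey-1 bound on $g(t)=t^\mu K_\mu(t)$ and composes with the distance via Fa\`a di Bruno. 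What the paper's device buys is precisely the step you label ``standard scaling'': homogeneity gives $|\partial_\bz^\gamma|\bz|_p|\simeq|\bz|_p^{1-|\gamma|_1}$, but the factorial-with-geometric-constant growth in $|\gamma|_1$ is not a scaling fact — it is the analyticity of the $\ell_p$-distance away from the origin, which is exactly what the Cauchy estimate on the cone supplies (and which is delicate for odd $p$ on coordinate hyperplanes, a point both arguments gloss over). Two further details you should supply: in the integral representation you must split the exponential, e.g.\ bound $\cosh^k(s)e^{-t\cosh(s)/2}\leq(2/t)^kk!$ and keep $e^{-t\cosh(s)/2}$ to retain convergence, then absorb the $t^\mu$ prefactor via boundedness of $z^\mu K_\mu(z)$; and the Fa\`a di Bruno collapse needs the partition-sum estimate $\sum_{P\in\Pi(n)}|P|!\prod_{S\in P}|S|!\leq C^nn!$, which is the generating-function bound the paper proves inside Lemma~\ref{lem:circ} and which you assert implicitly.

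For the non-stationary kernel the two arguments are closer in substance: the paper threads the quantity $(c_2|\bx-\by|)^{-|\alpha+\beta|_1}$ through closure lemmas (products, Lemma~\ref{lem:prod}; compositions $\exp$, $g^{\pm1/q}$, Lemma~\ref{lem:circ}; Cramer's rule for the inverse), whereas you establish uniform Gevrey-1 bounds on $\overline D\times\overline D$ for $(\boldsymbol{\Sigma}_\bx+\boldsymbol{\Sigma}_\by)^{-1}$ (Neumann series, using the uniform spectral lower bound) and for $\det(\cdot)^{\pm1/2}$, and then observe that on a bounded domain such uniform bounds imply~\eqref{eq:as} with $c_2\simeq({\rm diam}(D))^{-1}$. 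That observation is correct, and the implicit use of boundedness of $D$ is no worse than the paper's own use of $\det(\boldsymbol{\Sigma}_\bx)\geq c_0>0$ via compactness of $\overline D$; your version is arguably more transparent, at the cost of not producing a diagonal-singular estimate that would survive on unbounded domains. Overall: correct in outline, different machinery; the one step you must genuinely argue rather than cite is the factorial derivative bound for the $\ell_p$-distance.
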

We postpone the proof of the lemma to Appendix~\ref{app:lemma1}.

\section{Sampling the random field}\label{sec:sample}
By 
definition,
$\ZZ(\bx,\cdot)$, $\bx\in\NN$ is a Gaussian random field with covariance matrix 
$\boldsymbol{C}\in \R^{N\times N}$, $N=|\NN|$, and $\boldsymbol{C}_{ij}=\varrho(\bx_i,\bx_j)$, where we write
$\NN:=\{\bx_1,\ldots,\bx_N\}$. 
The main goal of this section is to establish a new way to 
efficiently approximate $\boldsymbol{C}^{1/2}\bz$ for given $\bz\in\R^N$.
Roughly, the strategy is to approximate $\boldsymbol{C}$ by an $H^2$-matrix and to benefit from 
the fast matrix-vector multiplication provided by it. This allows us to efficiently approximate $\boldsymbol{A}\bz$ (without actually factorizing the matrix $\boldsymbol{C}$).

\subsection{$H^2$-matrix approximation of the covariance matrix}\label{sec:h2}
Given the finite set of evaluation points $\NN:=\{\bx_1,\ldots,\bx_N\}\subset 
D$, we approximate the covariance matrix $\boldsymbol{C}\in \R^{N\times N}$, 
$\boldsymbol{C}_{ij}:=\varrho(\bx_i,\bx_j)$
by an $H^2$-matrix $\boldsymbol{C}_p$ via interpolation of order $p\in\N$.

In the following, we recall the definition of $H^2$-matrices  and the 
approximation process as laid out in, e.g.,~\cite{h2mat}. The rough idea is to partition the index set of the 
covariance matrix into \emph{far-field} blocks, which can be approximated
efficiently by interpolation of the covariance function, and \emph{near-field} blocks, 
which are stored exactly.

\subsubsection{Block partitioning}
For each subset $X\subseteq \NN$, we denote by $B_X\subseteq \R^d$, the 
smallest axis-parallel box such that $X \subseteq B_X$.
We build a binary tree of clusters in the following way. Let $X_{\rm 
root}:=\NN= \{\bx_1,\ldots,\bx_N\}$ denote the root of the tree which has level 
zero ${\rm level}(X_{\rm root})=0$ by definition.
For each node of the tree $X$ with $|X| > C_{\rm leaf}$ for some cut-off 
constant $C_{\rm leaf}{\geq 2}$
(usually $C_{\rm leaf}\approx 20$), we define two sons of $X$ as follows: Split $B_X$ in half along its longest edge into $B_0\cup 
B_1= B_X$.
Define ${\rm sons}(X):=\{X_0,X_1\}$ with $X_0:=X\cap B_0$ and 
$X_1:= X\setminus X_0$ and set ${\rm level}(X_i)={\rm level}(X)+1$ for $i=0,1$. For a node $X$ with $|X|\leq C_{\rm leaf}$, we define ${\rm sons}(X):=\emptyset$.
This procedure generates a binary tree denoted by $\T_{\rm cl}$ (where ${\rm cl}$ stands for \emph{cluster}) and guarantees that its leaves satisfy $|X|\leq C_{\rm leaf}$.

For a parameter $\eta>0$, we consider the {\rm admissibility condition} for 
axis parallel boxes $B,B^\prime\subseteq \R^d$
\begin{align}\label{eq:adm}
 \max\{{\rm diam}(B),{\rm diam}(B^\prime)\}\leq \eta \,{\rm dist}(B,B^\prime),
\end{align}
where the euclidean distance between the bounding boxes is defined by
\begin{align*}
 {\rm dist}(B,B^\prime):=\inf_{\bx\in B,\by\in B^\prime}|\bx-\by|.
\end{align*}
The condition~\eqref{eq:adm} will be used to build the block-cluster tree $\T\subseteq 
\T_{\rm cl}\times \T_{\rm cl}$ as follows. The root of $\T$ is $(X_{\rm root},X_{\rm root})$.
For each node $(X,Y)\in\T$ of the tree, define ${\rm sons}(X,Y)$, the set of sons,  
as:
\begin{align*}
\left\{\;\qquad\parbox{0.8\textwidth}{\begin{itemize}
 \item[\textbf{if}] $B_X$ and $B_Y$ 
satisfy~\eqref{eq:adm} or if ${\rm sons}(X)=\emptyset={\rm sons}(Y)$ \textbf{set} ${\rm sons}(X,Y)=\emptyset$
 \item[\textbf{else if}]${\rm sons}(Y)\neq 
\emptyset$ and ${\rm sons}(X)=\emptyset$ \textbf{set}  ${\rm sons}(X,Y)=\{X\}\times {\rm sons}(Y)$
  \item[\textbf{else if}] ${\rm sons}(X)\neq 
\emptyset$ and ${\rm sons}(Y)=\emptyset$ \textbf{set} ${\rm sons}(X,Y)={\rm sons}(X)\times \{Y\}$
   \item[\textbf{else}]${\rm 
sons}(X)\neq \emptyset$ and ${\rm sons}(Y)\neq\emptyset$ \textbf{set} ${\rm sons}(X,Y)={\rm sons}(X)\times{\rm sons}(Y)$
\end{itemize}}\right.
\end{align*}
We also define the level as ${\rm level}(X_{\rm root}, X_{\rm root})=0$ and ${\rm 
level}(X, Y)={\rm level}(X^\prime, Y^\prime)+1$ for $(X, Y)\in {\rm sons}(X^\prime, 
Y^\prime)$.
Further, we define 
\begin{align*}
\T_{\rm far}:=\set{(X,Y)\in\T}{{\rm sons}(X,Y)=\emptyset\text{ and } 
B_X,B_Y\text{ satisfy }~\eqref{eq:adm}}
\end{align*}
as well as
\begin{align*}
\T_{\rm near}:=\set{(X,Y)\in\T}{{\rm sons}(X,Y)=\emptyset\text{ and } 
B_X,B_Y\text{ do not satisfy }~\eqref{eq:adm}}.
\end{align*}
Note that by definition of the block-cluster tree $\T$, the set $\T_{\rm near}\cup\T_{\rm far}$ contains all the leaves of $\T$. Moreover, we see that for each $(X,Y)\in \T\setminus(\T_{\rm near}\cup\T_{\rm far})$,
there holds
\begin{align*}
 X\times Y=\bigcup_{(X^\prime,Y^\prime)\in{\rm sons}(X,Y)}X^\prime\times Y^\prime
\end{align*}
Therefore, $\T_{\rm near}\cup \T_{\rm far}$ is a partition of $\NN\times \NN$
in the sense that each pair of points $(\bx_i,\bx_j)\in \NN\times 
\NN$ for $1\leq i,j\leq N$ is contained in exactly one $(X,Y)\in \T_{\rm 
near}\cup \T_{\rm far}$.

\subsubsection{Interpolation}
The blocks $(X,Y)\in\T_{\rm far}$ satisfy~\eqref{eq:adm} and hence interpolation of the kernel function is 
highly accurate. This allows us to store the matrix very efficiently.
Let $I(X):=\set{i\in\N}{\bx_i\in X}$ denote the index set of $X$. 
The basic idea now is to replace $\boldsymbol{C}|_{I(X)\times I(Y)}$ by a low-rank 
approximation $\boldsymbol{V}^X \boldsymbol{M}^{XY} (\boldsymbol{V}^Y)^T$ with $\boldsymbol{V}^X\in \R^{|X|\times p^d}$, $\boldsymbol{M}^{XY}\in 
\R^{p^d\times p^d}$, and $\boldsymbol{V}^Y\in \R^{|Y|\times p^d}$, where $p$ is 
the interpolation order. The three matrices are defined by {Chebychev} interpolation 
of the covariance function. To that end, let $\{q_1^X,\ldots,q_{p^d}^X\}$ denote 
transformed, tensorial {Chebychev} nodes in $B_X$
with the corresponding Lagrange basis functions $L_1^X,\ldots,L_{p^d}^X\colon 
B_X\to \R$.
Given $(X,Y)\in \T_{\rm far}$, we may approximate
\begin{align*}
 \varrho(\bx,\by)\approx c_p^{XY}(\bx,\by):=\sum_{n,m=1}^{p^d}\varrho(q_n^X,q_m^Y) 
L_n^X(\bx)L_m^Y(\by)\quad\text{for all }\bx\in X,\by\in Y.
\end{align*}
For $i,j\in\{1,\ldots,N\}$ and $n,m\in\{1,\ldots,p^d\}$, this leads to
\begin{align*}
 \boldsymbol{V}^X_{in}:=L^X_n(\bx_i),\quad  \boldsymbol{V}^Y_{jm}:=L^Y_m(\bx_j),\text{ and}\quad  \boldsymbol{M}^{XY}_{nm}:= 
\varrho(q_n^X,q_m^Y)
\end{align*}
and hence
\begin{align*}
 \boldsymbol{C}|_{I(X)\times I(Y)}\approx \boldsymbol{V}^X \boldsymbol{M}^{XY} (\boldsymbol{V}^Y)^T.
\end{align*}
The admissibility condition~\eqref{eq:adm} guarantees that the approximation 
error converges to zero exponentially in $p$, as we prove in Proposition~\ref{prop:h2} below.
Further note that the {Chebychev} interpolation described above is exact on polynomials of degree $p$. Thus, for $X\in \T_{\rm cl}$ and 
$\bx_i\in X^\prime\in {\rm sons}(X)$, there holds with the transfer matrices $\boldsymbol{T}^{X^\prime X}:= (L^X_n(q_m^{X^\prime}) )_{mn}\in 
\R^{p^d\times p^d}$
\begin{align*}
 \boldsymbol{V}^{X}_{in}:= L^X_n(\bx_i)= \sum_{m=1}^{p^d} 
L^X_n(q_m^{X^\prime})L^{X^\prime}_m(\bx_i)=\sum_{m=1}^{p^d} L^X_n(q_m^{X^\prime}) 
\boldsymbol{V}^{X^\prime}_{im}=(\boldsymbol{V}^{X^\prime}\boldsymbol{T}^{X^\prime X})_{in}.
\end{align*}
Thus, it suffices to store $\boldsymbol{V}^X$ only for the leaves of $\T_{\rm cl}$ together with the 
transfer matrices $\boldsymbol{T}^{X^\prime X}$. This enables very efficient storage and arithmetics
for $H^2$ matrices.

The capabilities of $H^2$-matrices which we employ in this work are summarized 
below in Proposition~\ref{prop:h2}.
To that end, we assume that the points $\NN$ are approximately uniformly 
distributed, in the following sense.

\begin{assumption}[quasi-uniform distribution]\label{ass:approxu}
 We say that $\NN$ is quasi-uniformly distributed if there exists a constant $C_{\rm u}>0$ such that
{ \begin{align*}
  C_{\rm u}^{-1}N^{-1/d}\leq \min_{\bx,\bx^\prime\in \NN}|\bx-\bx^\prime|\leq 
  \sup_{\bx\in D}\min_{\bx^\prime\in\NN}|\bx-\bx^\prime|\leq C_{\rm u}N^{-1/d}.
 \end{align*}}
%
%subset
%$ Q_{\bx}\subseteq D$ with ${\bx}\in Q_{\bx}$ such that $\bigcup_{{\bx}\in\NN}Q_{\bx} = D$, $Q_{\bx}\cap Q_{\bx^\prime}$ 
%has measure zero for all ${\bx}\neq {\bx}^\prime\in \NN$, and
%\begin{align}\label{eq:approxu}
%\begin{split}
%C_{\rm u}^{-1} |D|&\leq|Q_{\bx}|N\leq C_{\rm u} |D|,\\
%C_{\rm u}^{-1}|Q_{\bx}|^{1/d}&\leq {\rm diam}(Q_{\bx})\leq C_{\rm u}|Q_{\bx}|^{1/d}
%\end{split}
%\end{align}
%for some constant $C_{\rm u}>0$ independent of $N$.
\end{assumption}

\begin{proposition}\label{prop:h2}
Suppose we have a covariance matrix $\boldsymbol{C}\in \R^{N\times N}$ and an asymptotically smooth 
kernel $\varrho(\cdot,\cdot)$ and recall Assumption~\ref{ass:approxu} on approximate 
uniform distribution of $\NN$. 
Then, there exists a constant $C_H>0$ such that, for all $p\in\N_0$,
the $H^2$-matrix  $\boldsymbol{C}_p\in \R^{N\times N}$ constructed as above satisfies
\begin{align}\label{eq:h2err}
 \norm{\boldsymbol{C}-\boldsymbol{C}_p}{2}\leq\norm{\boldsymbol{C}-\boldsymbol{C}_p}{F}:=\Big(\sum_{i,j=1}^N|\boldsymbol{C}-\boldsymbol{C}_p|_{ij}^2\Big)^{1/2}\leq C_HN 
(\log(p)+1)^{2d-1}\Big(\frac{\eta}{4c_2}\Big)^{p}.
\end{align}
(The constant $c_2$ is defined in~\eqref{eq:as}.)
The $H^2$-matrix $\boldsymbol{C}_p$ is symmetric and can be stored using less than $C_Hp^{2d} N$ memory units. 
Moreover, given any vector $\bx\in \R^N$, it is possible to compute
$\boldsymbol{C}_px\in \R^N$ in less than $C_H p^{2d} N$ arithmetic operations.
The constant $C_H$ depends only on $C_{\rm leaf}$ and $d$.
The matrix $\boldsymbol{C}_p$ is positive definite if $p$ is sufficiently large such that
\begin{align}\label{eq:ppos}
C_HN (\log(p)+1)^{2d-1}\Big(\frac{\eta}{4c_2}\Big)^{p}<\lambda_{\rm min}(\boldsymbol{C}).
\end{align}
\end{proposition}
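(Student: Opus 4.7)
The proposition bundles four claims---an error bound in Frobenius and spectral norm, symmetry of $\boldsymbol{C}_p$, linear storage and matrix-vector cost, and positive definiteness for large $p$---and I would prove them in that order, with the real work concentrated in the error estimate.

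For the error bound I would exploit the partition property: the blocks in $\T_{\rm near}$ reproduce $\boldsymbol{C}$ exactly, so
\[
\|\boldsymbol{C}-\boldsymbol{C}_p\|_F^2 \;=\; \sum_{(X,Y)\in \T_{\rm far}} \sum_{i\in I(X),\,j\in I(Y)} |\varrho(\bx_i,\bx_j)-c_p^{XY}(\bx_i,\bx_j)|^2,
\]
and the whole problem reduces to a pointwise bound
\[
\sup_{\bx\in B_X,\,\by\in B_Y} |\varrho(\bx,\by)-c_p^{XY}(\bx,\by)| \;\lesssim\; (\log(p)+1)^{2d-1}\, (\eta/(4c_2))^p
\]
for every $(X,Y)\in\T_{\rm far}$, with the implicit constant independent of the block. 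I would obtain this via the standard argument of \cite{h2mat}: write the tensorial Chebychev interpolation error as a telescoping sum of $2d$ univariate interpolation errors, and estimate each by the classical Chebychev bound for functions analytic on a Bernstein ellipse. The admissibility condition~\eqref{eq:adm}, after rescaling each bounding box to $[-1,1]^d$, guarantees that $\varrho$ extends analytically to an ellipse large enough that the asymptotic-smoothness bound~\eqref{eq:as} yields precisely the decay rate $(\eta/(4c_2))^p$, while the factor $(\log(p)+1)^{2d-1}$ collects the Lebesgue constants of the one-dimensional Chebychev interpolations. Summing the squared pointwise estimate over the disjoint far-field index pairs and using $\sum_{(X,Y)\in\T_{\rm far}} |I(X)|\,|I(Y)|\leq N^2$ produces the claimed Frobenius bound, and $\|\cdot\|_2\leq\|\cdot\|_F$ closes the first inequality.

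Symmetry and complexity are then standard $H^2$-matrix accounting. For symmetry, the construction gives $\boldsymbol{M}^{XY}_{nm}=\varrho(q_n^X,q_m^Y)=\boldsymbol{M}^{YX}_{mn}$, so the approximate $(X,Y)$- and $(Y,X)$-blocks are transposes of each other and the near-field blocks inherit symmetry from $\boldsymbol{C}$. For storage and cost, the nested basis described in the excerpt keeps $\boldsymbol{V}^X$ only at cluster leaves, a $p^d\!\times\! p^d$ transfer matrix at each internal cluster, and a $p^d\!\times\! p^d$ coupling matrix $\boldsymbol{M}^{XY}$ for each far-field block. Under Assumption~\ref{ass:approxu} the cluster tree has $O(N)$ nodes, and a bounded-sparsity argument (a uniform bound on the number of admissible partners of each cluster, depending only on $\eta$ and $d$) yields $|\T_{\rm far}|+|\T_{\rm near}|=O(N)$ with $O(1)$ entries per near-field block, hence $O(p^{2d}N)$ units of storage. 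The upward/coupling/downward sweep of the $H^2$ matrix-vector product then runs in the same complexity. Positive definiteness follows from symmetry of $\boldsymbol{C}_p$ and Weyl's inequality: $\lambda_{\rm min}(\boldsymbol{C}_p)\geq \lambda_{\rm min}(\boldsymbol{C}) - \|\boldsymbol{C}-\boldsymbol{C}_p\|_2$, which is strictly positive exactly under~\eqref{eq:ppos} by the first step.

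The hardest ingredient is the per-block Chebychev estimate: one must extract the decay factor $\eta/(4c_2)$ with the exact constants coming from~\eqref{eq:as} and~\eqref{eq:adm}, and obtain the logarithmic Lebesgue-constant prefactor with the correct exponent $2d-1$ (rather than the naive $2d$). The sparsity bound on $\T$, on which the linear complexity depends, is the other nontrivial piece and relies essentially on the quasi-uniform distribution of $\NN$.
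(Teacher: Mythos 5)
Your proposal is correct and follows essentially the same route as the paper's proof: the near field is exact, the Frobenius error is the sum over far-field blocks of a per-entry tensorial Chebychev interpolation error, the admissibility condition~\eqref{eq:adm} together with the asymptotic smoothness~\eqref{eq:as} extracts the factor $(\log(p)+1)^{2d-1}(\eta/(4c_2))^{p}$, summation over at most $N^2$ entries gives the factor $N$, a bounded-sparsity argument (which in the paper rests on Assumption~\ref{ass:approxu} through Lemmas~\ref{lem:bb} and~\ref{lem:distH2} and the annulus/volume comparison) plus the standard $H^2$ accounting from~\cite{h2mat} gives the $C_Hp^{2d}N$ storage and matrix-vector cost, and positive definiteness follows from the eigenvalue perturbation bound $\lambda_{\rm min}(\boldsymbol{C}_p)\geq\lambda_{\rm min}(\boldsymbol{C})-\norm{\boldsymbol{C}-\boldsymbol{C}_p}{2}$ exactly as you state. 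The one place where you deviate is the univariate ingredient: the paper's Lemma~\ref{lem:ceb} telescopes the tensorial error into $2d$ one-dimensional errors and bounds each by the derivative-based estimate $\frac{2^{-p}}{(p+1)!}\norm{\partial^{(p+1)}f}{L^\infty}$, which is then fed directly by the factorial derivative bounds in~\eqref{eq:as}, whereas you propose the analyticity/Bernstein-ellipse form of the Chebychev error. Both yield the claimed rate, but the derivative route reaches the base $\eta/(4c_2)$ with no extra work, while the ellipse route requires first constructing the holomorphic extension that~\eqref{eq:as} implicitly provides (radius proportional to $c_2\,{\rm dist}(B_X,B_Y)$, with the modulus blowing up at the boundary) and then choosing the ellipse parameter carefully, e.g.\ shrinking it by a factor $p/(p+1)$, so as not to degrade the exponential base; this is a manageable but nontrivial amount of extra bookkeeping that the paper avoids.
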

We postpone the proof of the lemma to Appendix~\ref{app:proph2}.
\subsection{Computing the square-root (Method 1)}
Since $\boldsymbol{C}$ is positive definite in our case, a standard method is to compute the 
Cholesky factorization $\boldsymbol{L}\boldsymbol{L}^T=\boldsymbol{C}$. This can be done using $H^2$-matrices
in {\emph{almost}} linear cost (analyzed in~\cite{hmatrices} for $H$-matrices, but the method transfers to $H^2$-matrices). However, to the authors'
best knowledge, there is no complete error analysis available, and due to 
the complicated structure of the algorithm, 
the worst-case error estimate may be overly pessimistic.
Therefore, we propose an iterative algorithm based on a variant of the Lanczos iteration. Note that polynomial or rational approximations of
the square root (as pursued in, e.g.,~\cite{rational}) are doomed to fail since smooth random fields result in very badly conditioned covariance matrices $\boldsymbol{C}$ (see also the numerical experiments below).
This implies that a polynomial approximation of the square root over the spectrum of $\boldsymbol{C}$ is very costly, whereas a rational approximation requires the inverse of $\boldsymbol{C}$ which is hard to compute due to
the bad condition number.

The idea behind the algorithm below is as follows.  Given a positive
definite symmetric matrix $\boldsymbol{M}\in\R^{N\times N}$ and a vector
$\bz\in\R^N$, the aim is to compute efficiently an approximation to
$\boldsymbol{M}^{1/2}\bz$. For arbitrary $k\leq N$ define the order-$k$ Krylov subspace of $\boldsymbol{M}$ and $\bz$ as
\begin{align}\label{eq:bigZ}
\KK_k:={\rm span}\{\bz,\boldsymbol{M}\bz,\boldsymbol{M}^2\bz, \dots,\boldsymbol{M}^{k-1}\bz).
\end{align}		
Assuming $\KK_k$ is $k$-dimensional, consider the orthogonal matrix $\boldsymbol{Q}\in\R^{N\times k}$
whose columns are the orthonormal basis vectors of the Krylov subspace, i.e., $\boldsymbol{Q}^T \boldsymbol{Q} =\boldsymbol{I}_k$ and ${\rm range}(\boldsymbol{Q})=\KK_k$.
Now define $\boldsymbol{U}\in\R^{k\times k}$ by
\[
\boldsymbol{U}:=\boldsymbol{Q}^T \boldsymbol{M} \boldsymbol{Q}.
\]
If $k=N$ then $\boldsymbol{Q}\boldsymbol{Q}^T=\boldsymbol{I}_N$ and $\boldsymbol{Q}\boldsymbol{U}\boldsymbol{Q}^T = \boldsymbol{M}$, from
which it follows that
\begin{equation}\label{eq:ideal}
\boldsymbol{M}^{1/2}\bz =\boldsymbol{Q}\boldsymbol{U}^{1/2}\boldsymbol{Q}^T \bz.
\end{equation}
The algorithm relies on explicit matrix multiplication to construct
$\boldsymbol{U}$ and then a direct factorization of $\boldsymbol{U}$, thus for large $N$ it
is feasible only when $k\ll N$, in which case \eqref{eq:ideal} does not hold
exactly. However, as we show later it may hold to a good enough
approximation. The following Lanczos type algorithm builds up progressively the columns of
$\boldsymbol{Q}$ without fully computing $\KK_k$ first.

\begin{remark}\label{rem:qr}
In the following, we make frequent use of the $QR$-factorisation of matrices and therefore recall the most important facts: For a matrix $\boldsymbol{A}\in\R^{n\times k}$
with $k\leq n\in \N$, there
exists a
$QR$-factorization $\boldsymbol{A}=\boldsymbol{Q}\boldsymbol{R}$ such that $\boldsymbol{Q}\in\R^{n\times k}$ and $\boldsymbol{R}\in\R^{k\times k}$. 
The columns of $\boldsymbol{Q}$ are orthonormal and for $1\leq j\leq {\rm rank}(\boldsymbol{A})$, 
the first $j$
columns of $\boldsymbol{Q}$ span the same linear space as the first $j$ columns of $\boldsymbol{A}$. Moreover, $\boldsymbol{R}$ is upper triangular. If we restrict to positive diagonal entries of $\boldsymbol{R}$,
the factorization is unique if ${\rm rank}(\boldsymbol{A})=k$.
\end{remark}

\begin{algorithm}\label{alg1}
	\textbf{ Input:} positive definite symmetric matrix $\boldsymbol{M}\in\R^{N\times N}$, vector $\bz\in\R^N$, and
	maximal number of iterations $k\in\N$.
	\begin{enumerate}
		\item Compute Krylov subspace: Set $\boldsymbol{Q}_1:=\bz/|\bz|\in\R^{N\times 1}$ and $k_0=k$. For $j=2,\ldots,k$ do:
		\begin{enumerate}
			\item Compute $\widetilde{\bq}:= \boldsymbol{M}\bq^{j-1}\in\R^N$, where $\bq^{j-1}$ is the $(j-1)$-th column of $\boldsymbol{Q}_{j-1}\in\R^{N\times (j-1)}$.
			\item Compute $QR$-factorization $\boldsymbol{Q}_j\in\R^{N\times j}$ (with orthonormal columns), $\boldsymbol{R}_j\in\R^{j\times j}$ (upper triangular) such that $\boldsymbol{Q}_j\boldsymbol{R}_j=(\boldsymbol{Q}_{j-1},\widetilde{\bq})\in\R^{N\times j}$.
			\item If $(\boldsymbol{R}_{j})_{jj}=0$, set $k_0=j-1$ and goto Step 2. %Else, compute error estimator $|\bq_{j}\cdot \widetilde{\bq}|$. If sufficiently small, set $k=j$ and goto (2).
		\end{enumerate}
		\item Compute $\boldsymbol{U}_{k_0}:= \boldsymbol{Q}_{k_0}^T \boldsymbol{M} \boldsymbol{Q}_{k_0}\in \R^{{k_0}\times {k_0}}$.
		\item Compute $\boldsymbol{U}_{k_0}^{1/2}$ directly.
		\item Return $\by=\boldsymbol{Q}_{k_0} \boldsymbol{U}_{k_0}^{1/2} \boldsymbol{Q}_{k_0}^T \bz$.
	\end{enumerate}
	\textbf{ Output:} Approximation $\by\approx \boldsymbol{M}^{1/2}\bz$ and number of steps $k_0$.
\end{algorithm}

\begin{remark}
	Obviously, the orthogonal basis $\bq^1,\ldots,\bq^k$ could also be generated by Gram-Schmidt orthogonalization. However, numerical experiments show that this
	is not stable with respect to roundoff errors. Moreover, also the classical Lanczos algorithm seems to be prone to rounding errors, especially for ill-conditioned matrices.
	Therefore, we propose to use the $QR$-factorization as above. 
\end{remark}
\begin{remark}
As proved in Lemma~\ref{lem:qq} below (and as is easily verified), a generic $QR$-algorithm produces $\boldsymbol{Q}_j$ which coincides with the first $j$ columns of $\boldsymbol{Q}$ up to signs.
For simplicity, we assume in the following that the $QR$-algorithm ensures that the diagonal entries of $\boldsymbol{R}_j$ are always non-negative.
This guarantees that the first $j$ columns of $\boldsymbol{Q}_{j+1}$ coincide with $\boldsymbol{Q}_j$.
Thus, it suffices
to store only the new column $\bq^j$.
\end{remark}

\begin{theorem}\label{thm:sampleerror2}
	Let $0<\eta<4c_2$ and let $p$ be sufficiently large such that $\boldsymbol{C}_p$ constructed from $\boldsymbol{C}$ as in Section~\ref{sec:h2} is positive definite (condition~\eqref{eq:ppos} is sufficient),
	and suppose Assumption~\ref{ass:approxu} holds.
	Given $\bz\in\R^{N}$, call Algorithm~\ref{alg1} with $\boldsymbol{M}=\boldsymbol{C}_p$, $\bz$, and a maximal number of iterations $k\in\N$.
	The output of Algorithm~\ref{alg1} contains the approximation $\ZZ_{k,p}(\bz):=\by\in \R^{N}$ to $\boldsymbol{C}^{1/2}\bz$ and the step number $k_0\leq k$.
	\begin{itemize}
	 \item[(i)] There holds with Kronecker's delta $\delta_{i,j}$
	 \begin{align*}
	  \frac{|\boldsymbol{C}^{1/2} \bz -\ZZ_{k,p}(\bz)|}{|\bz|}
	  \leq  \delta_{k_0,k} {\sqrt{2\norm{\boldsymbol{M}}{2}}}\frac{4r^{{2}}}{r-1}r^{-k}+\frac{2C_{\rm H}N (\log(p)+1)^{2d-1}\Big(\frac{\eta}{4c_2}\Big)^{p}}{\max\{\lambda_{\rm min}(\boldsymbol{C}),\lambda_{\rm min}(\boldsymbol{C}_p)\}^{1/2}},
	 \end{align*} 
	 where $C_{\rm H}$, $\eta$, $c_2$, and $p$ are as in Proposition~\ref{prop:h2}, and
	  \begin{align*}
 r:= \frac{\lambda_{\rm max}(\boldsymbol{C}_p)+\lambda_{\rm min}(\boldsymbol{C}_p)}{\lambda_{\rm max}(\boldsymbol{C}_p)-\lambda_{\rm min}(\boldsymbol{C}_p)}>1.
\end{align*}
	 \item[(ii)] Let $\lambda_{\rm max}(\boldsymbol{C}_p)=\lambda_1> \lambda_2> \ldots> \lambda_M>0$ denote the distinct eigenvalues of $\boldsymbol{C}_p$ for some $M\leq N$ and assume 
	\begin{align*}
	 |\lambda_i-\lambda_j|\leq \lambda_{\rm max}(\boldsymbol{C}_p)C_\kappa \kappa^{\min\{i,j\}} \quad\text{for all }1\leq i,j\leq M
	\end{align*}
	for some $C_\kappa>0$ and $0<\kappa<1$, then
	\begin{align*}
	\frac{|\boldsymbol{C}^{1/2} \bz -\ZZ_{k,p}(\bz)|}{|\bz|}\leq \delta_{k_0,k}3\sqrt{\lambda_{\rm max}(\boldsymbol{C}_p)C_\kappa}\;\kappa^{k/4}+
	3\sqrt{2C_{\rm H}N} (\log(p)+1)^{d-1/2}\Big(\frac{\eta}{4c_2}\Big)^{p/2}.
	\end{align*}
	      \end{itemize}
	The algorithm completes in $\mathcal{O}(k^3p^{2d}N)$ arithmetic operations and uses less than $\mathcal{O}(kN)$ storage.
\end{theorem}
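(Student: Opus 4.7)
The plan is to split
\[
\boldsymbol{C}^{1/2}\bz - \ZZ_{k,p}(\bz) = (\boldsymbol{C}^{1/2}-\boldsymbol{C}_p^{1/2})\bz + \bigl(\boldsymbol{C}_p^{1/2}\bz - \ZZ_{k,p}(\bz)\bigr)
\]
and treat the two terms independently. For the $H^2$-approximation error $(\boldsymbol{C}^{1/2}-\boldsymbol{C}_p^{1/2})\bz$, I would read
\[
\boldsymbol{C}-\boldsymbol{C}_p = \boldsymbol{C}^{1/2}(\boldsymbol{C}^{1/2}-\boldsymbol{C}_p^{1/2}) + (\boldsymbol{C}^{1/2}-\boldsymbol{C}_p^{1/2})\boldsymbol{C}_p^{1/2}
\]
as a Sylvester equation for the unknown $\boldsymbol{C}^{1/2}-\boldsymbol{C}_p^{1/2}$; since both coefficient matrices are symmetric positive definite, the standard Sylvester bound gives $\|\boldsymbol{C}^{1/2}-\boldsymbol{C}_p^{1/2}\|_2\lesssim\|\boldsymbol{C}-\boldsymbol{C}_p\|_2/\max\{\lambda_{\rm min}(\boldsymbol{C}),\lambda_{\rm min}(\boldsymbol{C}_p)\}^{1/2}$, and Proposition~\ref{prop:h2} (combined with $\|\cdot\|_2\le\|\cdot\|_F$) plugs in the bound on $\|\boldsymbol{C}-\boldsymbol{C}_p\|_2$, producing the second summand in both error estimates.

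For the Krylov error I would invoke the classical \emph{exactness property}: since $\bz=|\bz|\,\boldsymbol{Q}_{k_0}\be_1$ and $\boldsymbol{U}_{k_0}=\boldsymbol{Q}_{k_0}^T\boldsymbol{C}_p\boldsymbol{Q}_{k_0}$ is upper Hessenberg by construction of Algorithm~\ref{alg1}, an induction on the degree shows $q(\boldsymbol{C}_p)\bz=\boldsymbol{Q}_{k_0}q(\boldsymbol{U}_{k_0})\boldsymbol{Q}_{k_0}^T\bz$ for every $q\in\PP^{k_0-1}$. Subtracting this identity from the analogous formula for $\sqrt{\cdot}$ and using $\sigma(\boldsymbol{U}_{k_0})\subseteq[\lambda_{\rm min}(\boldsymbol{C}_p),\lambda_{\rm max}(\boldsymbol{C}_p)]$ (Courant--Fischer applied to the compression) yields
\[
\frac{|\boldsymbol{C}_p^{1/2}\bz-\ZZ_{k,p}(\bz)|}{|\bz|}\le 2\inf_{q\in\PP^{k_0-1}}\max_{\lambda\in[\lambda_{\rm min}(\boldsymbol{C}_p),\lambda_{\rm max}(\boldsymbol{C}_p)]}|\sqrt{\lambda}-q(\lambda)|.
\]
In the early-termination case $k_0<k$, the triggering condition $(\boldsymbol{R}_j)_{jj}=0$ forces $\boldsymbol{C}_p\bq^{j-1}\in\KK_{k_0}$; iterating shows that $\KK_{k_0}$ is $\boldsymbol{C}_p$-invariant, hence $\boldsymbol{C}_p^{1/2}\bz$ already lies in $\KK_{k_0}$ and the Krylov term vanishes exactly, which is the source of the Kronecker factor $\delta_{k_0,k}$.

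For statement (i) I would estimate the polynomial-approximation infimum using a shifted Chebyshev expansion of $\sqrt{\lambda}$: mapping $[\lambda_{\rm min}(\boldsymbol{C}_p),\lambda_{\rm max}(\boldsymbol{C}_p)]$ linearly onto $[-1,1]$ sends the singularity of $\sqrt{\cdot}$ at $0$ to the point $-r$, and the truncated Chebyshev series (equivalently, the Bernstein-ellipse estimate) produces a bound of the form $\sqrt{2\|\boldsymbol{M}\|_2}\cdot(4r^{2}/(r-1))\cdot r^{-k}$. For statement (ii), the hypothesis $|\lambda_i-\lambda_j|\le\lambda_{\rm max}(\boldsymbol{C}_p)C_\kappa\kappa^{\min(i,j)}$ squeezes all distinct eigenvalues into an interval of length $O(\kappa)$ around $\lambda_{\rm max}(\boldsymbol{C}_p)$, so I would take $q$ to be the polynomial interpolating $\sqrt{\cdot}$ at a carefully chosen subset of the largest eigenvalues and control the Newton-form remainder $\prod(\lambda-\lambda_i)$ via the spacing bound, eventually extracting the claimed rate $\kappa^{k/4}$. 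The main obstacle here is choosing the interpolation nodes so as to balance the growth of the divided differences (the derivatives of $\sqrt{\cdot}$ blow up as one approaches $0$) against the geometric shrinkage of the product, in such a way that the clean exponent $k/4$ survives.

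The complexity and storage assertions are direct accounting on Algorithm~\ref{alg1}: the at most $k$ Krylov iterations each cost $\mathcal{O}(p^{2d}N)$ for the $\boldsymbol{C}_p$-multiplication (Proposition~\ref{prop:h2}) plus $\mathcal{O}(jN)$ for the incremental $QR$-update in Step~1(b); assembling $\boldsymbol{U}_{k_0}$ in Step~2 costs another $\mathcal{O}(kp^{2d}N+k^2N)$; the direct factorization of the dense matrix $\boldsymbol{U}_{k_0}\in\R^{k_0\times k_0}$ in Step~3 is $\mathcal{O}(k^3)$; and the final matrix--vector chain $\boldsymbol{Q}_{k_0}\boldsymbol{U}_{k_0}^{1/2}\boldsymbol{Q}_{k_0}^T\bz$ in Step~4 costs $\mathcal{O}(Nk+k^2)$. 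Summing and absorbing everything into $k^3p^{2d}N$ delivers the arithmetic bound, while storing $\boldsymbol{Q}_{k_0}\in\R^{N\times k_0}$ dominates the memory at $\mathcal{O}(Nk)$.
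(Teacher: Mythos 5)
Your treatment of part~(i) and of the cost/storage accounting is sound and essentially the paper's own route: triangle inequality in $\boldsymbol{C}^{1/2}\bz-\boldsymbol{C}_p^{1/2}\bz$ plus the Krylov error, the Lipschitz-type perturbation bound for the matrix square root (your Sylvester-equation reading is exactly the standard proof of the paper's~\eqref{eq:sqstandard}, cf.\ Lemma~\ref{lem:sqrtm}), Krylov exactness on polynomials of degree $k-1$ together with the Bernstein-ellipse estimate for $\sqrt{\cdot}$ on $[\lambda_{\rm min}(\boldsymbol{C}_p),\lambda_{\rm max}(\boldsymbol{C}_p)]$, and the invariant-subspace argument for the early-termination case (the paper's Lemma~\ref{lem:qr1.4}; you should still add the small step showing that invariance gives $\boldsymbol{Q}_{k_0}\boldsymbol{U}_{k_0}^{1/2}\boldsymbol{Q}_{k_0}^T\bz=\boldsymbol{C}_p^{1/2}\bz$, not merely $\boldsymbol{C}_p^{1/2}\bz\in\KK_{k_0}$).

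Part~(ii), however, is not established, for two concrete reasons. First, the $H^2$-error term: your Sylvester bound produces $\norm{\boldsymbol{C}^{1/2}-\boldsymbol{C}_p^{1/2}}{2}\lesssim\norm{\boldsymbol{C}-\boldsymbol{C}_p}{2}/\lambda_{\rm min}^{1/2}$, which is the second summand of~(i), not of~(ii); the second summand of~(ii) is $3\norm{\boldsymbol{C}-\boldsymbol{C}_p}{2}^{1/2}$ (no eigenvalue in the denominator, exponent $p/2$), and it requires the H\"older-$1/2$ perturbation bound~\eqref{eq:sqimproved} of Lemma~\ref{lem:sqrtm}, which your proposal never supplies; the two bounds are not interchangeable when $\lambda_{\rm min}(\boldsymbol{C}_p)$ is tiny, which is precisely the regime~(ii) targets. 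Second, the Krylov term: your master inequality bounds the error by the \emph{uniform} best approximation of $\sqrt{\cdot}$ on the whole interval $[\lambda_{\rm min},\lambda_{\rm max}]$, and this cannot yield a rate $\kappa^{k/4}$ independent of $\lambda_{\rm min}$ (uniform polynomial approximation of $\sqrt{x}$ near $x=0$ decays only algebraically); interpolating instead at the clustered eigenvalues of $\boldsymbol{C}_p$ controls the error only at those eigenvalues, whereas the term $\norm{p(\boldsymbol{U}_k)-\boldsymbol{U}_k^{1/2}}{2}$ must be controlled at the Ritz values of $\boldsymbol{U}_k$, which need not coincide with them. You flag exactly this node-selection/divided-difference balancing as an unresolved obstacle, and it is the heart of the matter. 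The paper circumvents it by a different mechanism: it bounds the coupling residual $\norm{\boldsymbol{M}\boldsymbol{Q}_j-\boldsymbol{Q}_j\boldsymbol{Q}_j^T\boldsymbol{M}\boldsymbol{Q}_j}{2}=|(\bq^{j+1})^T\boldsymbol{M}\bq^j|$, estimates it a priori through the geometric decay of the diagonal of $\boldsymbol{R}$ in the $QR$-factorization of the scaled Krylov matrix (Lemmas~\ref{lem:vandermondeplusqr} and~\ref{lem:qr2}, where the eigenvalue-clustering hypothesis enters via a Newton-remainder argument at the eigenvalues of $\boldsymbol{M}$ only), converts residual to square-root error with~\eqref{eq:sqimproved} in a $2\times2$ block splitting (Proposition~\ref{prop:alg2}), and removes the resulting $\min_{1\le i\le j}$ by the known monotonicity of the Lanczos error. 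Without these ingredients your outline for~(ii) does not close.
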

\begin{remark}
 The theorem covers two regimes of covariance matrices. Whereas case~(i) is the classical Lanczos convergence analysis for well-conditioned matrices, case~(ii) 
 considers ill-conditioned matrices with rapidly decaying
 eigenvalues. The numerical examples in Section~\ref{sec:numerics} suggest that the error estimates might be more or less sharp, since Algorithm~\ref{alg1} performs remarkably well for
 smooth random fields (with rapidly decaying eigenvalues) and very rough random fields (with well-conditioned covariance matrices).
 Note that $k_0<k$ (hence $\delta_{k_0,k}=0$) implies that the condition in the if-clause~1(c) is true. This however is an exotic case, meaning that {$\bz$ lies some non-trivial invariant subspace} of
 $\boldsymbol{C}_p$ with fewer
 than $k$ dimensions.
 In this situation the algorithm computes $\boldsymbol{C}_p^{1/2}\bz$ exactly and only the $H$-matrix approximation error remains.
 We note that {by use of~\eqref{eq:sqstandard} instead of~\eqref{eq:sqimproved} in the proof below,}
 it is possible to replace $(k+1)/4$ by $(k+1)/2$ and $p/2$ by $p$ in the exponents in~(ii) at the price of including the square-root of the minimal eigenvalue in the denominator as in~(i).
 
\end{remark}

\begin{proof}[Proof of Theorem~\ref{thm:sampleerror2}]
The cost estimate is proved as follows. The Krylov subspace loop of Algorithm~\ref{alg1} completes at most $k$ iterations. In each iteration, we have one $H^2$-matrix-vector multiplication which needs
	$\mathcal{O}(p^{2d}N)$ operations. Moreover, the $QR$-factorization needs $\mathcal{O}(Nk^2)$ arithmetic operations. After the matrix $\boldsymbol{Q}_k$ is set up,
	we have $k$ $H^2$-matrix-vector multiplications to compute $\boldsymbol{M}\boldsymbol{Q}_k$ and $k^2$ scalar products to compute $\boldsymbol{U}_{k_0}$. In total, this needs $\mathcal{O}(N(k+k^2))$ arithmetic operations.
	The computation of $\boldsymbol{U}_{k_0}^{1/2}$ can be done in $\mathcal{O}(k^3)$ operations (see, e.g.,~\cite{sqrtm} for the algorithm and the corresponding analysis). Finally, to compute $\by$, we have $k$ scalar products, 
	a matrix vector multiplication with a $(k\times k)$ matrix and a matrix-matrix multiplication of $(N\times k)$ and $(k\times k)$ matrices, all of which can be done in $\mathcal{O}(Nk^2)$ arithmetic operations.
	
	To see~(i), we employ the triangle inequality 
	\begin{align}\label{eq:split}
	\begin{split}
	\frac{|\boldsymbol{C}^{1/2} \bz -\ZZ_{k,p}(\bz)|}{|\bz|}&\leq \frac{|\boldsymbol{C}_p^{1/2} \bz -\ZZ_{k,p}(\bz)|}{|\bz|}+\frac{|\boldsymbol{C}^{1/2}\bz-\boldsymbol{C}_p^{1/2} \bz|}{|\bz|}\\
	&\leq \frac{|\boldsymbol{C}_p^{1/2} \bz -\ZZ_{k,p}(\bz)|}{|\bz|}
	+\norm{\boldsymbol{C}_p^{1/2}-\boldsymbol{C}^{1/2}}{2}.
	\end{split}
	\end{align}
	For the first term on the right-hand side, Lemma~\ref{lem:qr1.5} below proves
	\begin{align*}
	 \frac{|\boldsymbol{C}_p^{1/2} \bz -\ZZ_{k,p}(\bz)|}{|\bz|}&
	  \leq \delta_{k_0,k}
	  {\sqrt{2\norm{\boldsymbol{M}}{2}}}\frac{4r^{{2}}}{r-1}r^{-k}. 
	\end{align*}
	As shown in~\eqref{eq:sqstandard} of Lemma~\ref{lem:sqrtm} below, the second term on the right-hand side of~\eqref{eq:split} is bounded by
	\begin{align}\label{eq:triangle}
	\norm{\boldsymbol{C}_p^{1/2}-\boldsymbol{C}^{1/2}}{2}\leq 2\max\{\lambda_{\rm min}(\boldsymbol{C}),\lambda_{\rm min}(\boldsymbol{C}_p)\}^{-1/2}\norm{\boldsymbol{C}_p-\boldsymbol{C}}{2}.
	\end{align}
	Hence,~(i) follows from Proposition~\ref{prop:h2}. 
	For~(ii), we note that the combination of both estimates in Proposition~\ref{prop:alg2} below
	shows for $\boldsymbol{U}_j:=\boldsymbol{Q}_j^T\boldsymbol{M}\boldsymbol{Q}_j$
	\begin{align*}
	\min_{1\leq j\leq k}\frac{|\boldsymbol{C}_p^{1/2} \bz -\boldsymbol{Q}_j(\boldsymbol{U}_j^{1/2})\boldsymbol{Q}_j^T\bz|}{|\bz|}\leq \delta_{k_0,k} 
	3\sqrt{\lambda_{\rm max}(\boldsymbol{C}_p)C_\kappa}\;\kappa^{k/4}.	
	\end{align*}%
	We may eliminate the minimum in the error estimate since Algorithm~\ref{alg1} is essentially (up to roundoff errors) of Lanczos type, and for this algorithm,~\cite[Example~5.1]{monotone} shows that the approximation error
	$|\boldsymbol{C}_p^{1/2} \bz -\boldsymbol{Q}_j(\boldsymbol{U}_j^{1/2})\boldsymbol{Q}_j^T\bz|$ decreases monotonically in $j$. 
	Since $\boldsymbol{Q}_{k_0}(\boldsymbol{U}_{k_0}^{1/2})\boldsymbol{Q}_{k_0}^T\bz=\ZZ_{k,p}(\bz)$, the remainder of the proof then follows as for~(i) but we use~\eqref{eq:sqimproved}
	instead of~\eqref{eq:sqstandard} of Lemma~\ref{lem:sqrtm} below.
\end{proof}

\subsection{Computing the square-root (Method 2)}
The main drawback of Algorithm~\ref{alg1} is the additional storage requirements due to the necessity to store the matrix $\boldsymbol{Q}_k$. 
For this reason, we here follow a different approach, proposing a second algorithm that improves this situation.

The matrix sign function is defined for all square matrices $\widetilde{\boldsymbol{M}}$ with no pure imaginary eigenvalues as
\begin{align*}
 {\rm sgn}(\widetilde{\boldsymbol{M}}):=\widetilde{\boldsymbol{M}}(\widetilde{\boldsymbol{M}}^2)^{-1/2}.
\end{align*}
The sign function ${\rm sgn}(\widetilde{\boldsymbol{M}})$ can be computed using the Schultz iteration via
\begin{align}\label{eq:schultz}
 \boldsymbol{M}_{k+1} = \frac12 \boldsymbol{M}_k(3\boldsymbol{I}-\boldsymbol{M}_k^2),\quad \boldsymbol{M}_0=\widetilde{\boldsymbol{M}}.
\end{align}
The iterates $\boldsymbol{M}_k$ converge quadratically towards ${\rm sgn}(\widetilde{\boldsymbol{M}})$ if $\norm{\boldsymbol{I}-\widetilde{\boldsymbol{M}}^2}{2}<1$ in any matrix norm  
(see~\cite[Theorem~5.2]{sqit2}).
It is observed in~\cite{sqit1}, that all matrices $\boldsymbol{M}\in \R^{N\times N}$ with only positive real 
eigenvalues satisfy
\begin{align*}
{\rm sgn}\begin{pmatrix} 0 & \boldsymbol{M}\\ \boldsymbol{I} &0 \end{pmatrix} = 
\begin{pmatrix} 0 & \boldsymbol{M}^{1/2}\\ \boldsymbol{M}^{-1/2} & 0 \end{pmatrix},
\end{align*}
where $\boldsymbol{I}\in\R^{N\times N}$ denotes the identity matrix, which opens the possibility to compute $\boldsymbol{M}^{1/2}$ via the sign function of the matrix
By inserting 
\begin{align*}
 \widetilde{\boldsymbol{M}}:=\begin{pmatrix} 0 & \boldsymbol{M}\\ \boldsymbol{I} &0 \end{pmatrix}.
\end{align*}
By inserting this choice of $\widetilde{\boldsymbol{M}}$ into~\eqref{eq:schultz}, we see that all iterates have the form
\begin{align*}
 \boldsymbol{M}_k:=\begin{pmatrix} 0 & \boldsymbol{A}_k\\ \boldsymbol{B}_k & 0 \end{pmatrix}.
\end{align*}
As already observed in~\cite{sqit1}, this leads to the iteration
\begin{align}\label{eq:it2}
 \boldsymbol{A}_{k+1} = \frac12 \boldsymbol{A}_k(3\boldsymbol{I}-\boldsymbol{B}_k\boldsymbol{A}_k),\quad \boldsymbol{B}_{k+1}=\frac12\boldsymbol{B}_k(3\boldsymbol{I}-\boldsymbol{A}_k\boldsymbol{B}_k),
\end{align}
starting with $\boldsymbol{A}_0=\boldsymbol{M}$ and $\boldsymbol{B}_0=\boldsymbol{I}\in \R^{N\times N}$. The iterates $\boldsymbol{A}_k$ converge towards $\boldsymbol{M}^{1/2}$, which is what we aim to compute.
The considerations above lead us to the following recursive form of the Schulz 
algorithm above, which uses only matrix vector multiplication. {The subroutines \texttt{PartA} and \texttt{PartB} compute $\boldsymbol{A}_k\bz$ and $\boldsymbol{B}_k\bz$ respectively.}

\begin{algorithm}\label{alg2}
\textbf{ Input:} positive definite symmetric matrix $\boldsymbol{M}\in\R^{N\times N}$, vector $\bz\in\R^N$, 
maximal number of iterations $k\in\N$, temporary storage vectors $\bz^j\in \R^N$, $j\in\{1,\ldots,k\}$, and scaling factor $0<s<2\norm{\boldsymbol{C}_p}{2}^{-1}$ (the scaling factor ensures convergence of the algorithm).\\
\textbf{ Main:}
\begin{enumerate}
\item Compute $\by=\texttt{PartA}(s\boldsymbol{M},\bz,(\bz^j)_{j=1}^k,k)$.
\item Return $\by/\sqrt{s}$.
\end{enumerate}
\textbf{ Output:} the approximation $\by\approx \boldsymbol{M}^{1/2}\bz$.\\
\textbf{ Subroutines:}\\
$\texttt{PartA}(\boldsymbol{M},\bz,(\bz^j),k)$:
\begin{itemize}
 \item[(i)] If $k=0$, return $\boldsymbol{M}\bz$.
 \item[(ii)] Compute $\bz^k:= \texttt{PartA}(\boldsymbol{M},\bz,(\bz^j)_{j=1}^{k-1},k-1)$ and $\bz^k:= 
\texttt{PartB}(\boldsymbol{M},\bz^k,(\bz^j)_{j=1}^{k-1},k-1)$.
 \item[(iii)] Compute $\bz:=3\bz-\bz^k$.
 \item[(iv)] Return $\frac12\texttt{PartA}(\boldsymbol{M},\bz,(\bz^j)_{j=1}^{k-1},k-1)$.
\end{itemize}
$\texttt{PartB}(\boldsymbol{M},\bz,(\bz^j),k)$:
\begin{itemize} 
 \item[(i)] If $k=0$, return $\bz$.
 \item[(ii)] Compute $\bz^k:= \texttt{PartB}(\boldsymbol{M},\bz,(\bz^j)_{j=1}^{k-1},k-1)$ and $\bz^k:= 
 \texttt{PartA}(\boldsymbol{M},\bz^k,(\bz^j)_{j=1}^{k-1},k-1)$.
 \item[(iii)] Compute $\bz:=3\bz-\bz^k$.
 \item[(iv)] Return $\frac12\texttt{PartB}(\boldsymbol{M},\bz,(\bz^j)_{j=1}^{k-1},k-1)$.
\end{itemize}

\end{algorithm}

\begin{remark}
The extra storage vectors $(\bz^j)_{j=1}^k$ are needed to avoid allocation of a new temporary storage vector in each call of either $\texttt{PartA}$ are $\texttt{PartB}$.
This would result in $\mathcal{O}(3^k)$ additional allocations. By supplying the additional storage vectors, we can exploit the fact that each level of recursion can share a single
storage vector.
\end{remark}

\begin{theorem}\label{thm:sampleerror}
	Suppose Assumption~\ref{ass:approxu} holds and and let $\bz\in\R^{N}$. 
	If $0<\eta<4c_2$ and $p$ is sufficiently large such that $\boldsymbol{C}_p$ constructed from $\boldsymbol{C}$ as in Section~\ref{sec:h2} is positive definite (condition~\eqref{eq:ppos} is sufficient),
	Algorithm~\ref{alg1} called with $\boldsymbol{M}=\boldsymbol{C}_p$ and $0<s<2\norm{\boldsymbol{C}_p}{2}^{-1}$ computes
	the approximation $\ZZ_{k,p}(\bz):=\by\in \R^{N}$ such that 
	 \begin{align*}
	  \frac{|\boldsymbol{C}^{1/2} \bz -\ZZ_{k,p}(\bz)|}{|\bz|}&
	  \leq s^{-1/2}\kappa^{2^k}+\frac{2C_{\rm H}N (\log(p)+1)^{2d-1}\Big(\frac{\eta}{4c_2}\Big)^{p}}{\max\{\lambda_{\rm min}(\boldsymbol{C}),\lambda_{\rm min}(\boldsymbol{C}_p)\}^{1/2}},
	 \end{align*}
	 where $\kappa:=\max\{|1-s\lambda_{\rm max}(\boldsymbol{C}_p)|,|1-s\lambda_{\rm min}(\boldsymbol{C}_p)|\}<1$.
	The algorithm completes in $\mathcal{O}(3^kp^{2d}N)$ arithmetic operations and uses less than $kN$ extra storage.
	The constant $C_{\rm H}$ is defined in Proposition~\ref{prop:h2}.
\end{theorem}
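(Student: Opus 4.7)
The plan is to follow the same triangle inequality splitting used in the proof of Theorem~\ref{thm:sampleerror2}:
\begin{align*}
\frac{|\boldsymbol{C}^{1/2}\bz-\ZZ_{k,p}(\bz)|}{|\bz|}
\leq \frac{|\boldsymbol{C}_p^{1/2}\bz-\ZZ_{k,p}(\bz)|}{|\bz|}
+\norm{\boldsymbol{C}^{1/2}-\boldsymbol{C}_p^{1/2}}{2}.
\end{align*}
The second summand is estimated exactly as in~\eqref{eq:triangle} via Lemma~\ref{lem:sqrtm} combined with the $H^2$-approximation bound of Proposition~\ref{prop:h2}; this yields the second term on the right-hand side of the claimed estimate.

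For the first summand I would first verify by induction on $k$ that the recursive calls \texttt{PartA} and \texttt{PartB} compute exactly $\boldsymbol{A}_k\bz$ and $\boldsymbol{B}_k\bz$, where $\boldsymbol{A}_k,\boldsymbol{B}_k$ are the matrices generated by the coupled iteration~\eqref{eq:it2} with starting values $\boldsymbol{A}_0=s\boldsymbol{C}_p$ and $\boldsymbol{B}_0=\boldsymbol{I}$. Steps~(ii)--(iv) of each subroutine translate the product form $\boldsymbol{A}_k(3\boldsymbol{I}-\boldsymbol{B}_{k-1}\boldsymbol{A}_{k-1})/2$ exactly into three recursive evaluations, which simultaneously explains the $\mathcal{O}(3^k)$ count. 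Multiplying by the $H^2$-matvec cost $\mathcal{O}(p^{2d}N)$ at the base case (Proposition~\ref{prop:h2}) gives the claimed $\mathcal{O}(3^kp^{2d}N)$ complexity, while the fact that each level of recursion needs only one scratch vector $\bz^j$ that may be shared across its internal calls yields the $kN$ storage bound.

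Next, I would analyse the iteration~\eqref{eq:it2} via the equivalent Schulz iteration for the sign function, as recalled around~\eqref{eq:schultz}. Setting
\begin{align*}
\widetilde{\boldsymbol{M}}:=\begin{pmatrix}0 & s\boldsymbol{C}_p\\ \boldsymbol{I} & 0\end{pmatrix},\qquad
\boldsymbol{S}:={\rm sgn}(\widetilde{\boldsymbol{M}})=\begin{pmatrix}0 & (s\boldsymbol{C}_p)^{1/2}\\ (s\boldsymbol{C}_p)^{-1/2} & 0\end{pmatrix},
\end{align*}
one checks $\widetilde{\boldsymbol{M}}^2={\rm diag}(s\boldsymbol{C}_p,s\boldsymbol{C}_p)$, so the hypothesis $0<s<2\norm{\boldsymbol{C}_p}{2}^{-1}$ immediately gives $\norm{\boldsymbol{I}-\widetilde{\boldsymbol{M}}^2}{2}=\norm{\boldsymbol{I}-s\boldsymbol{C}_p}{2}=\kappa<1$. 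An elementary algebraic manipulation of the Schulz step (using the identity $\boldsymbol{I}-\boldsymbol{M}_{k+1}^2=\tfrac14(\boldsymbol{I}-\boldsymbol{M}_k^2)^2(3\boldsymbol{I}+\boldsymbol{I}-\boldsymbol{M}_k^2)$, which I would derive inline) shows quadratic convergence $\norm{\boldsymbol{I}-\boldsymbol{M}_k^2}{2}\leq\kappa^{2^k}$. Since $\boldsymbol{S}^2=\boldsymbol{I}$ and $\boldsymbol{S}$ commutes with $\boldsymbol{M}_k$, the factorisation $\boldsymbol{M}_k-\boldsymbol{S}=(\boldsymbol{M}_k^2-\boldsymbol{I})(\boldsymbol{M}_k+\boldsymbol{S})^{-1}$ together with the spectral bound $\norm{(\boldsymbol{M}_k+\boldsymbol{S})^{-1}}{2}\leq 1$ (eigenvalues of $\boldsymbol{M}_k+\boldsymbol{S}$ match signs with those of $\boldsymbol{S}$ and are bounded away from $0$ by $1$) upgrades this to $\norm{\boldsymbol{M}_k-\boldsymbol{S}}{2}\leq\kappa^{2^k}$. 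Reading off the $(1,2)$-block yields $\norm{\boldsymbol{A}_k-(s\boldsymbol{C}_p)^{1/2}}{2}\leq\kappa^{2^k}$, and rescaling by $s^{-1/2}$ (step~2 of the main routine) gives $|\boldsymbol{C}_p^{1/2}\bz-\ZZ_{k,p}(\bz)|/|\bz|\leq s^{-1/2}\kappa^{2^k}$.

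The main obstacle I expect is the passage from the iterated quantity $\norm{\boldsymbol{I}-\boldsymbol{M}_k^2}{2}$ to the desired error $\norm{\boldsymbol{M}_k-\boldsymbol{S}}{2}$: the matrix $\widetilde{\boldsymbol{M}}$ is non-symmetric, so one cannot directly invoke spectral-norm identities, and some care is needed to justify that $\boldsymbol{M}_k+\boldsymbol{S}$ is uniformly invertible throughout the iteration. Once this bound is in place, combining the two parts of the triangle inequality produces precisely the error estimate stated in the theorem, and the algorithmic cost and storage claims follow directly from the recursion tree analysis mentioned above.
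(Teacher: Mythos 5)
Your overall architecture is the same as the paper's: the triangle-inequality splitting~\eqref{eq:split}, the bound for $\norm{\boldsymbol{C}^{1/2}-\boldsymbol{C}_p^{1/2}}{2}$ via Lemma~\ref{lem:sqrtm} (estimate~\eqref{eq:sqstandard}) combined with Proposition~\ref{prop:h2}, the induction showing that \texttt{PartA}/\texttt{PartB} realize the iteration~\eqref{eq:it2} with $\boldsymbol{A}_0=s\boldsymbol{C}_p$, $\boldsymbol{B}_0=\boldsymbol{I}$, and the recursion-tree count giving $\mathcal{O}(3^kp^{2d}N)$ operations and $kN$ extra storage all coincide with the paper's proof. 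The only substantive difference is the convergence of the iteration itself: the paper delegates this to Lemma~\ref{lem:it}, which rests on the cited result \cite[Theorem~5.2]{sqit2} for the Pad\'e/Schulz sign iteration, whereas you attempt a self-contained argument via the residual identity $\boldsymbol{I}-\boldsymbol{M}_{k+1}^2=\tfrac14(\boldsymbol{I}-\boldsymbol{M}_k^2)^2\big(3\boldsymbol{I}+(\boldsymbol{I}-\boldsymbol{M}_k^2)\big)$ (which is correct) and the factorization $\boldsymbol{M}_k-\boldsymbol{S}=(\boldsymbol{M}_k^2-\boldsymbol{I})(\boldsymbol{M}_k+\boldsymbol{S})^{-1}$.

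The gap sits exactly where you flagged it: the claim $\norm{(\boldsymbol{M}_k+\boldsymbol{S})^{-1}}{2}\leq 1$ is false in general. The matrix $\boldsymbol{M}_k+\boldsymbol{S}$ is not normal, so bounding its eigenvalues away from zero controls only the spectral radius of the inverse, not its spectral norm. Concretely, since $\boldsymbol{A}_k$ and $\boldsymbol{B}_k$ are polynomials in $s\boldsymbol{C}_p$ with $\boldsymbol{A}_k=s\boldsymbol{C}_p\boldsymbol{B}_k$, diagonalizing $s\boldsymbol{C}_p$ decouples everything into $2\times 2$ antidiagonal blocks: for an eigenvalue $\lambda$ of $s\boldsymbol{C}_p$ the block of $\boldsymbol{M}_k+\boldsymbol{S}$ has off-diagonal entries $a_k+\lambda^{1/2}$ and $b_k+\lambda^{-1/2}$, so the inverse block has spectral norm $\max\{(a_k+\lambda^{1/2})^{-1},(b_k+\lambda^{-1/2})^{-1}\}$, which for $\lambda=s\lambda_{\rm min}(\boldsymbol{C}_p)$ small and small $k$ is of order $\lambda^{-1/2}\gg 1$, even though the eigenvalues of the block, $\pm\big((a_k+\lambda^{1/2})(b_k+\lambda^{-1/2})\big)^{1/2}$, do have modulus at least $1$. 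Hence $\norm{\boldsymbol{M}_k-\boldsymbol{S}}{2}\leq\kappa^{2^k}$ does not follow as written. The argument is repairable in an elementary way by staying with the symmetric, commuting blocks: reading off the $(1,2)$ block of your factorization gives $\boldsymbol{A}_k-(s\boldsymbol{C}_p)^{1/2}=(\boldsymbol{A}_k\boldsymbol{B}_k-\boldsymbol{I})\big(\boldsymbol{B}_k+(s\boldsymbol{C}_p)^{-1/2}\big)^{-1}$, and since $\boldsymbol{B}_k$ is positive definite (by induction, because $\norm{\boldsymbol{I}-\boldsymbol{A}_k\boldsymbol{B}_k}{2}<1$ throughout), $\norm{(\boldsymbol{B}_k+(s\boldsymbol{C}_p)^{-1/2})^{-1}}{2}\leq\norm{(s\boldsymbol{C}_p)^{1/2}}{2}<\sqrt{2}$, which yields $\norm{\boldsymbol{A}_k-(s\boldsymbol{C}_p)^{1/2}}{2}\leq\sqrt{2}\,\kappa^{2^k}$; the scalar identity $|a_k-\lambda^{1/2}|=\lambda\,|1-a_kb_k|/(a_k+\lambda^{1/2})$ recovers the constant $1$ of the stated theorem with a little extra care, or one simply invokes the paper's Lemma~\ref{lem:it} at this point. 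With that step repaired, the rest of your proposal matches the paper's proof.
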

\begin{remark}
 In contrast to Algorithm~\ref{alg1} which needs $\mathcal{O}(|\log_{\kappa}(\eps)|N)$ extra storage (at least in case~(ii)), we see that 
 Algorithm~\ref{alg2} requires only $\mathcal{O}(\log|\log(\eps)|N)$ additional storage for an error request of $\eps>0$.
 \end{remark}

\begin{proof}[Proof of Theorem~\ref{thm:sampleerror}]

First, we prove that \texttt{PartA} and \texttt{PartB} from Algorithm~\ref{alg2} correctly compute $\boldsymbol{A}_k\bz$ and $\boldsymbol{B}_k\bz$ from~\eqref{eq:it2}.
This is done by induction on $k$. First, for $k=0$, the output of \texttt{PartA} is obviously $\boldsymbol{M}\bz=\boldsymbol{A}_0\bz$ and the output of \texttt{PartB} is $\bz=\boldsymbol{B}_0\bz$. 
This confirms the case $k=0$. Assume that \texttt{PartA} and \texttt{PartB} work correctly for $k\in\N$. By substitution of $\texttt{PartA}(\boldsymbol{M},\bz,(\bz^j)_{j=1}^{k-1},k-1)=\boldsymbol{A}_{k-1}\bz$
and $\texttt{PartB}(\boldsymbol{M},\bz^k,(\bz^j)_{j=1}^{k-1},k-1)=\boldsymbol{B}_{k-1}\bz^k$ in \texttt{PartA}, the variable $\bz^k$ before step~(iii) is given by
$\bz^k=\boldsymbol{B}_{k-1}\boldsymbol{A}_{k-1}\bz$. Thus, step~(iii)--(iv) correctly compute $\tfrac12\boldsymbol{B}_{k-1}(3\bz-\boldsymbol{B}_{k-1}\boldsymbol{A}_{k-1}\bz)=\boldsymbol{A}_k\bz$.
During the execution of $\texttt{PartA}(\cdot,\cdot,\cdot,k)$, extra storage vector $\bz^k$ is not accessed by other instances of the subroutines (the function calls to $\texttt{PartA}(\cdot,\cdot,\cdot,k-1)$ and
$\texttt{PartB}(\cdot,\cdot,\cdot,k-1)$ access only $(\bz^k)_{j=1}^{k-1}$). This ensures that the correct value of $\bz^k$ is used at each point of the execution.
Analogously, we argue that $\texttt{PartB}$ works correctly and thus conclude the induction.

{For the computational cost estimate, we prove by induction that each subroutine $\texttt{PartA}(\cdot,\cdot,\cdot,k)$ and $\texttt{PartB}(\cdot,\cdot,\cdot,k)$
requires less than 
\begin{align}\label{eq:tmpcost}
 C(3^kp^{2d}N+2N\sum_{j=0}^{k-1}3^j)
\end{align}
operations for some universal constant $C\geq 1$ and all $k\in\N$.
For $k=0$, subroutine $\texttt{PartA}$ performs an $H^2$-matrix-vector multiplication which, according to Proposition~\ref{prop:h2}, 
 costs less than $\mathcal{O}(p^{2d}N)$.
 Subroutine $\texttt{PartB}$ just returns the vector $\bz$. This shows~\eqref{eq:tmpcost} for $k=0$ for both subroutines. Assume that~\eqref{eq:tmpcost} is correct for
 both subroutines for some $k>0$.
 The fact that each subroutine $\texttt{PartA}(\cdot,\cdot,\cdot,k+1)$ and $\texttt{PartB}(\cdot,\cdot,\cdot,k+1)$
 performs one scalar-vector multiplication and one vector addition as well as three calls to $\texttt{PartA}(\cdot,\cdot,\cdot,k)$ or $\texttt{PartB}(\cdot,\cdot,\cdot,k)$ shows that
 the cost of each subroutine  $\texttt{PartA}(\cdot,\cdot,\cdot,k+1)$ and $\texttt{PartB}(\cdot,\cdot,\cdot,k+1)$ is bounded by
 \begin{align*}
  3C(3^kp^{2d}N+2N\sum_{j=0}^{k-1}3^j) + 2N= C(3^{k+1}p^{2d}N+2N\sum_{j=1}^{k}3^j) + 2N\leq C(3^{k+1}p^{2d}N+2N\sum_{j=0}^{k}3^j).
 \end{align*}
This concludes the proof of~\eqref{eq:tmpcost}, which proves the cost estimate since
\begin{align*}
 C(3^kp^{2d}N+2N\sum_{j=0}^{k-1}3^j)\leq C3^kp^{2d}N + C3^kN\leq 2C3^kp^{2d}N.
\end{align*}} 
To see the error estimate, we use~\eqref{eq:split} and note that
	Algorithm~\ref{alg2} is nothing else than a recursive version of the iteration~\eqref{eq:it2}.  The scaling $s<2\norm{\boldsymbol{C}_p}{2}^{-1}$ ensures $\kappa<1$, since $\lambda\in\{\lambda_{\rm min}(\boldsymbol{C}_p),\lambda_{\rm max}(\boldsymbol{C}_p)\}$ satisfies  $1-s\lambda<1$ (since $s,\lambda>0$) as well as
 $s\lambda-1\leq s\norm{\boldsymbol{C}_p}{2}-1< 2-1= 1$. Thus, 
 Lemma~\ref{lem:it} shows
	\begin{align*}
	 \frac{|\boldsymbol{C}_p^{1/2} \bz -\ZZ_{k,p}(\bz)|}{|\bz|}&
	  \leq s^{-1/2}\big(\max\{|1-s\lambda_{\rm max}(\boldsymbol{C}_p)|,|1-s\lambda_{\rm min}(\boldsymbol{C}_p)|\}\big)^{2^k}=s^{-1/2}\kappa^{2^k}.
	 \end{align*}
	We conclude the proof with the aid of~\eqref{eq:triangle} and Proposition~\ref{prop:h2}.
\end{proof}

\section{Numerical experiments}\label{sec:numerics}
All numerical experiments where computed in Matlab, by use of a Matlab-$H^2$-matrix library which can be downloaded under \texttt{software.michaelfeischl.net}.
The authors are well aware that the Matlab implementation prohibits high-end performance. However, we wanted to demonstrate the feasibility of our algorithms and
show the correct convergence rates, for which purpose the Matlab implementation is sufficient.  

For the first example, we consider a covariance function of the form~\eqref{eq:nonstat} with 
\begin{align}\label{eq:excov}
 \boldsymbol{\Sigma}_\bx:=|\bx|^2\boldsymbol{I}\quad\text{and}\quad \boldsymbol{\Sigma}_\by:=|\by|^2\boldsymbol{I}.
\end{align}
We use Algorithm~\ref{alg1} to generate six samples on the unit square $D=[0,1]^2$ of the corresponding normal random field $\ZZ$ shown in Figure~\ref{fig:samplesnonstat}.
Figure~\ref{fig:samplesexp}--\ref{fig:samplesgauss} show samples of the covariance functions from~\eqref{eq:matern} with different parameters.
\begin{figure}
 \includegraphics[ clip, width=0.3\textwidth]{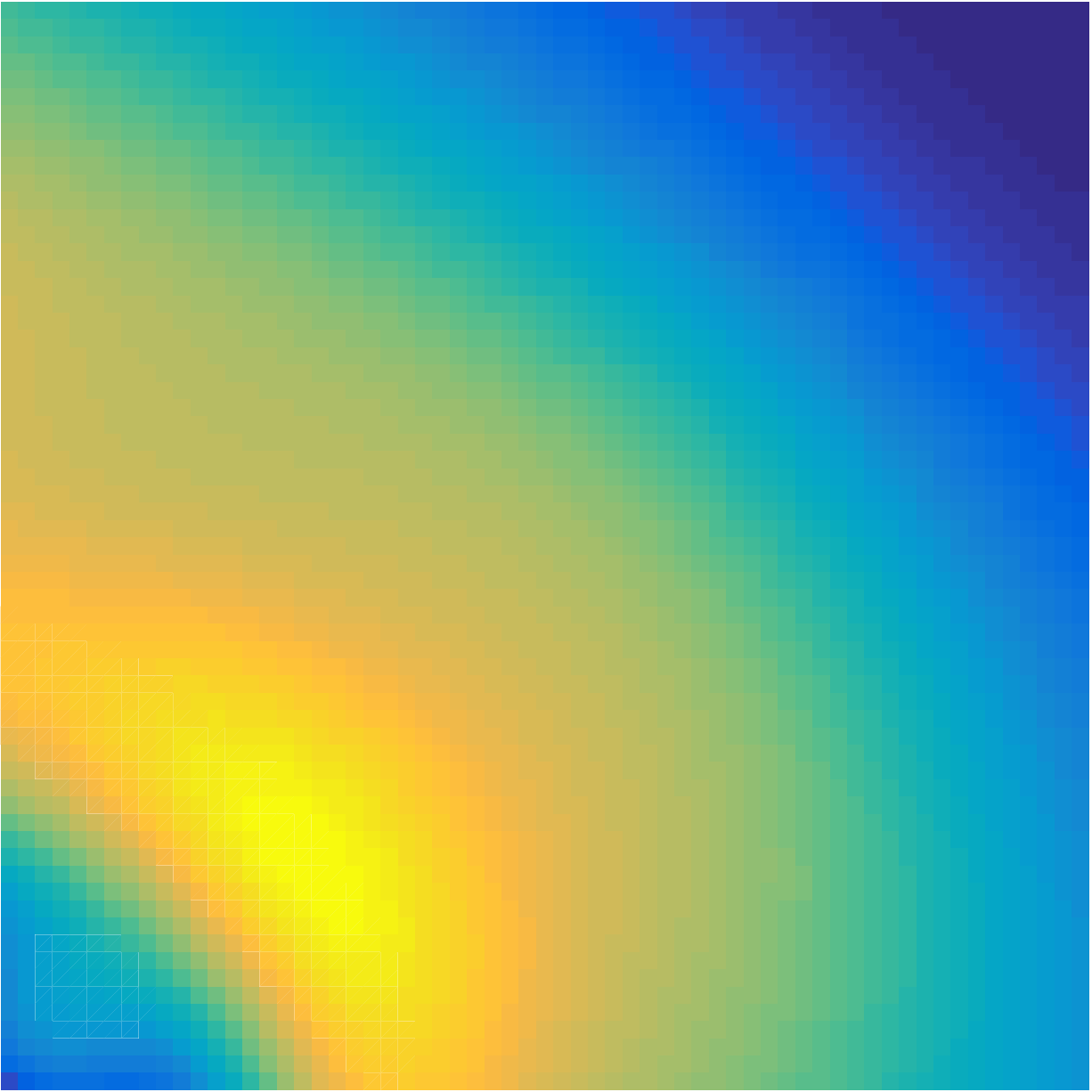}\hspace{5mm}%
\includegraphics[clip, width=0.3\textwidth]{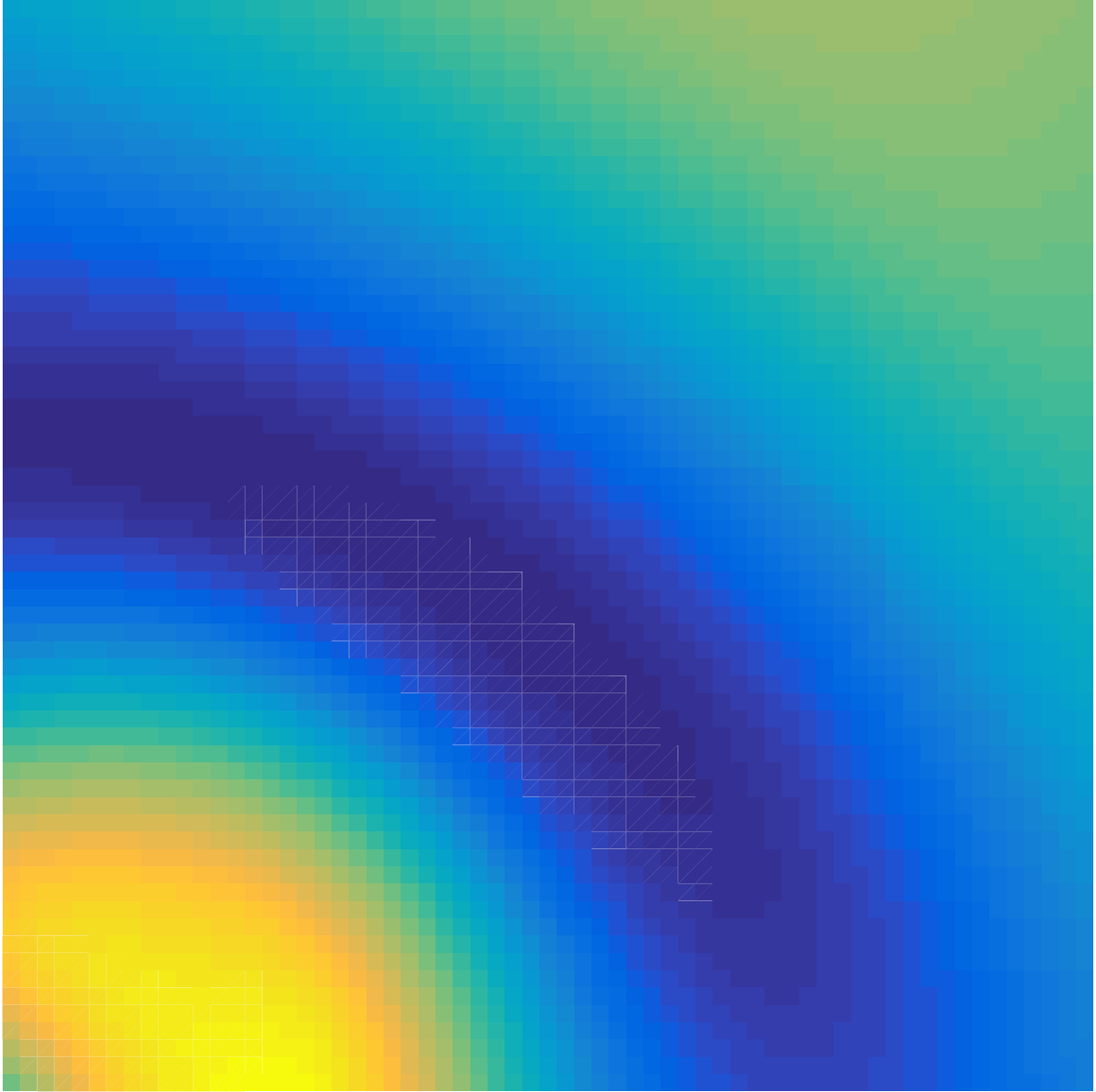}\hspace{5mm}%
\includegraphics[clip, width=0.3\textwidth]{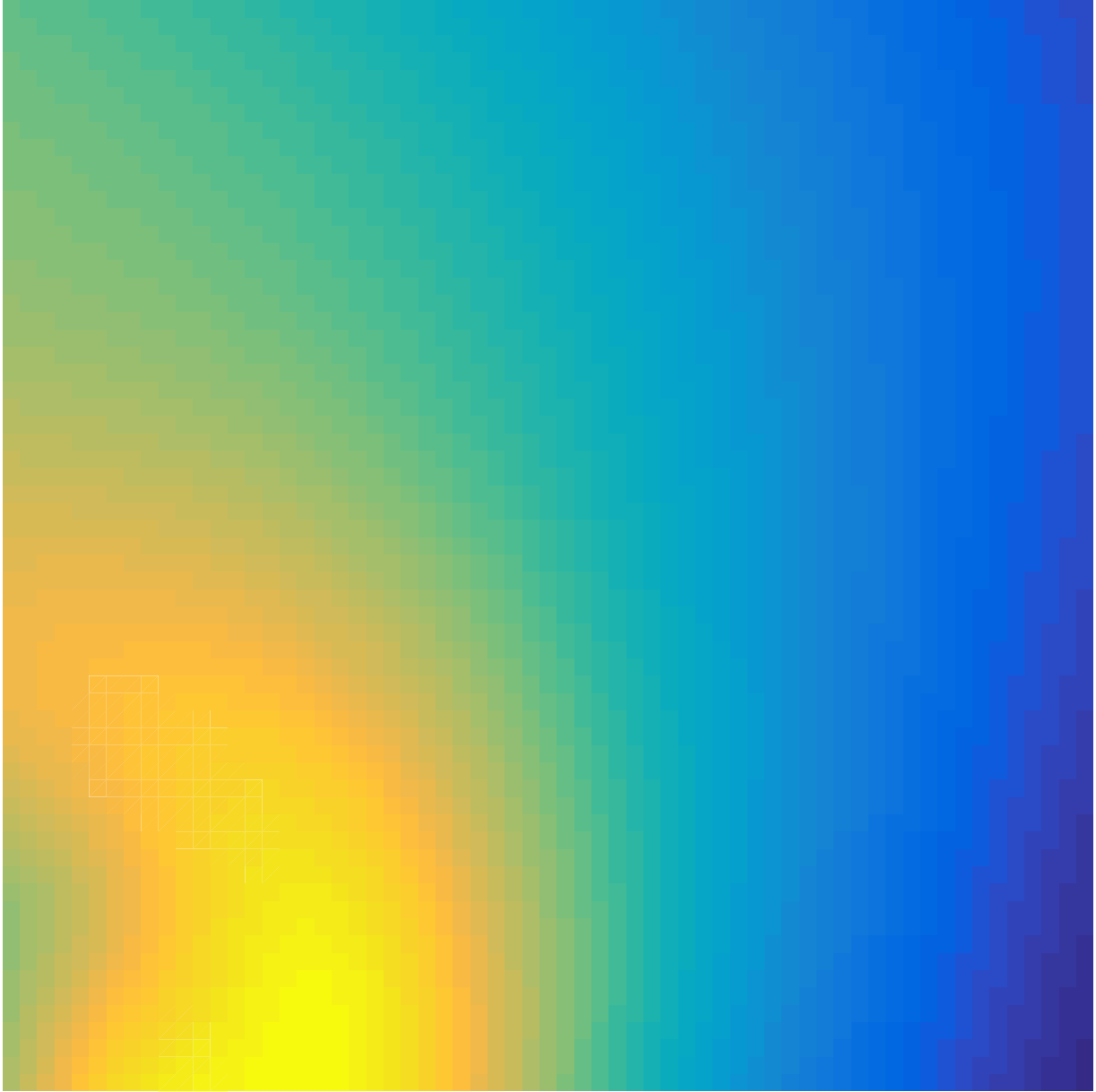}\\\vspace{5mm}%
\includegraphics[clip, width=0.3\textwidth]{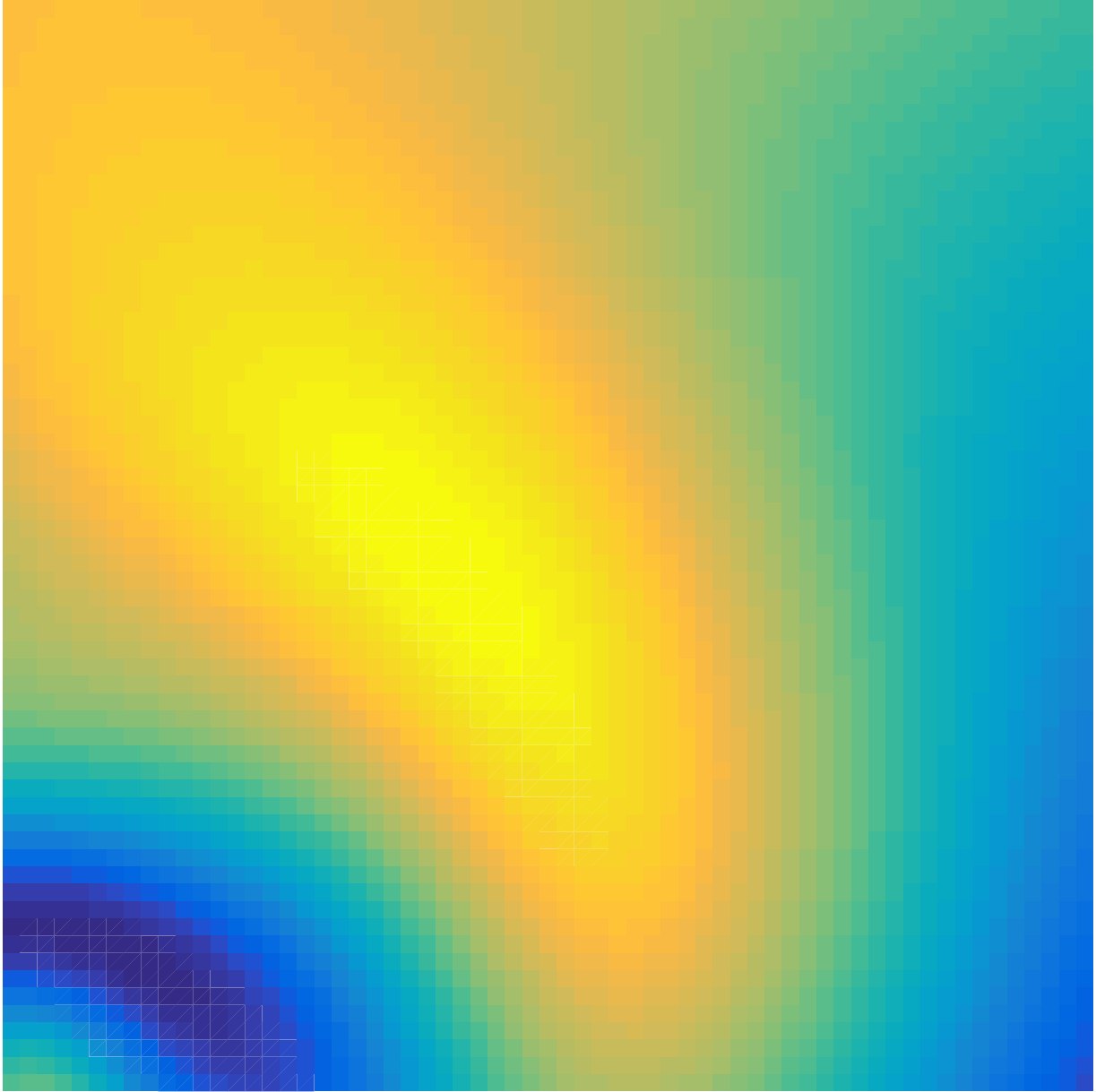}\hspace{5mm}%
\includegraphics[clip, width=0.3\textwidth]{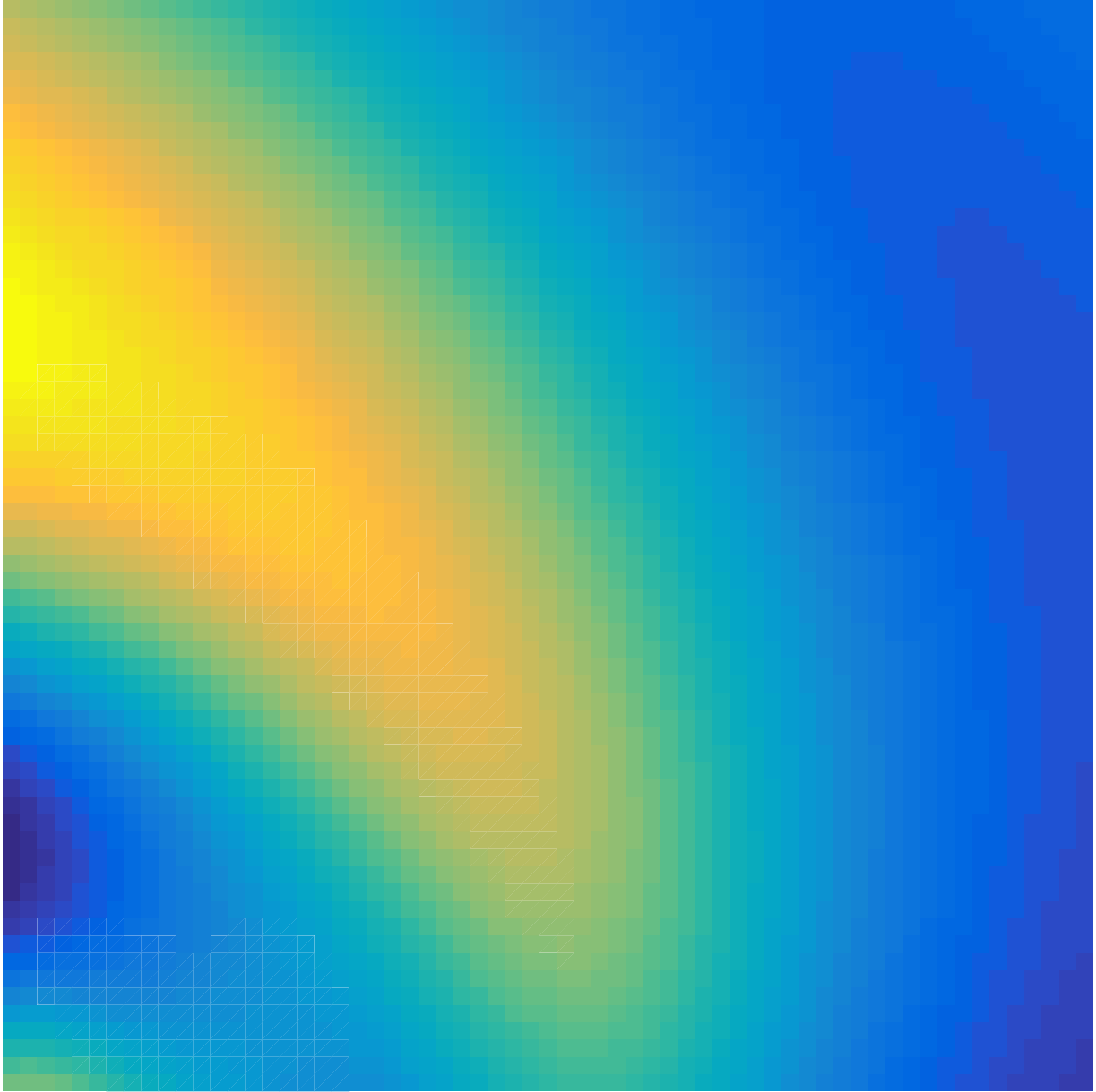}\hspace{5mm}%
\includegraphics[clip, width=0.3\textwidth]{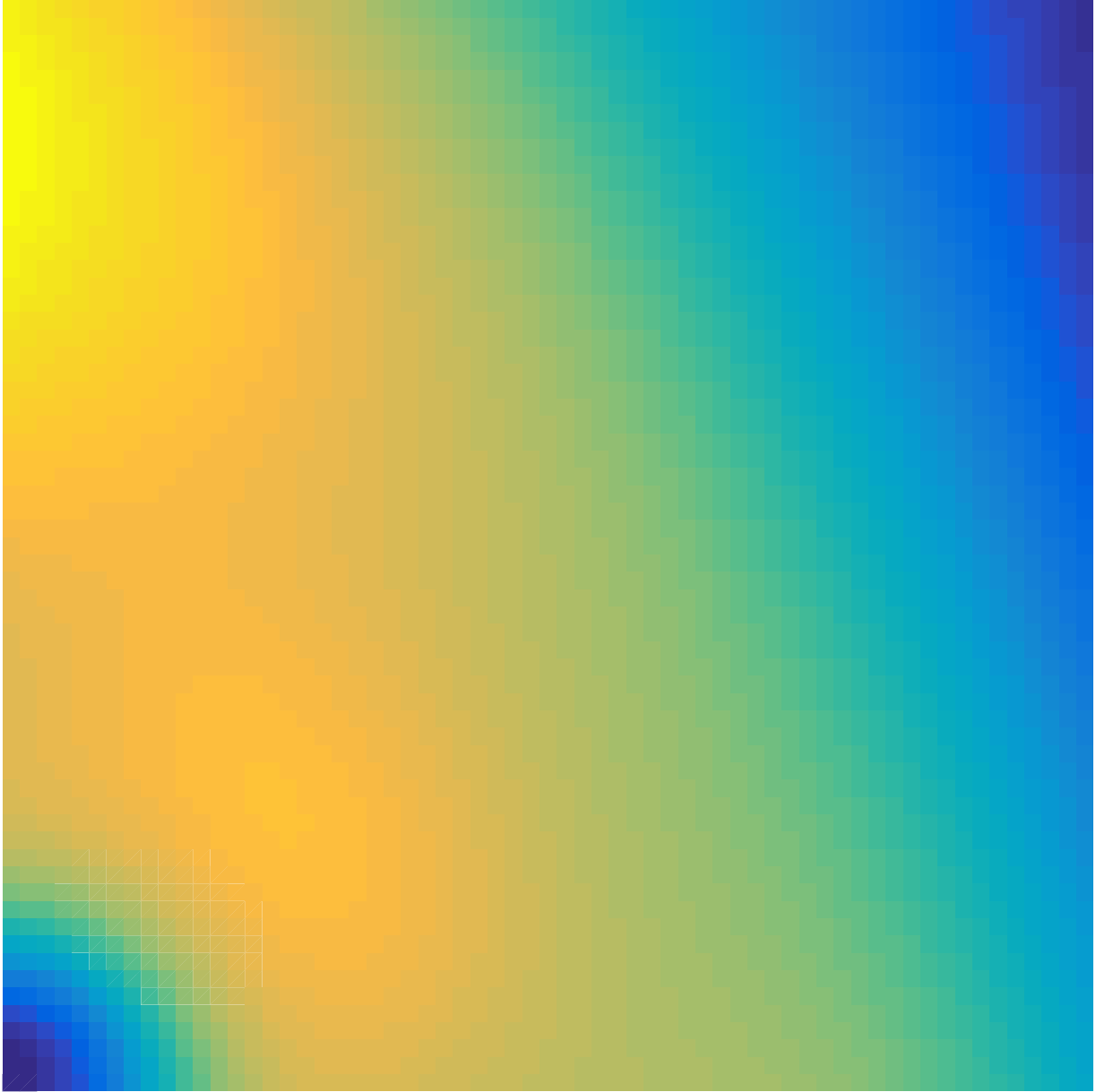}
\caption{Samples of $\ZZ$ with a non-stationary covariance function. We clearly observe the shorter covariance length (more variation) near the bottom left corner.}
\label{fig:samplesnonstat}
\end{figure}
\begin{figure}
\includegraphics[ clip, width=0.3\textwidth]{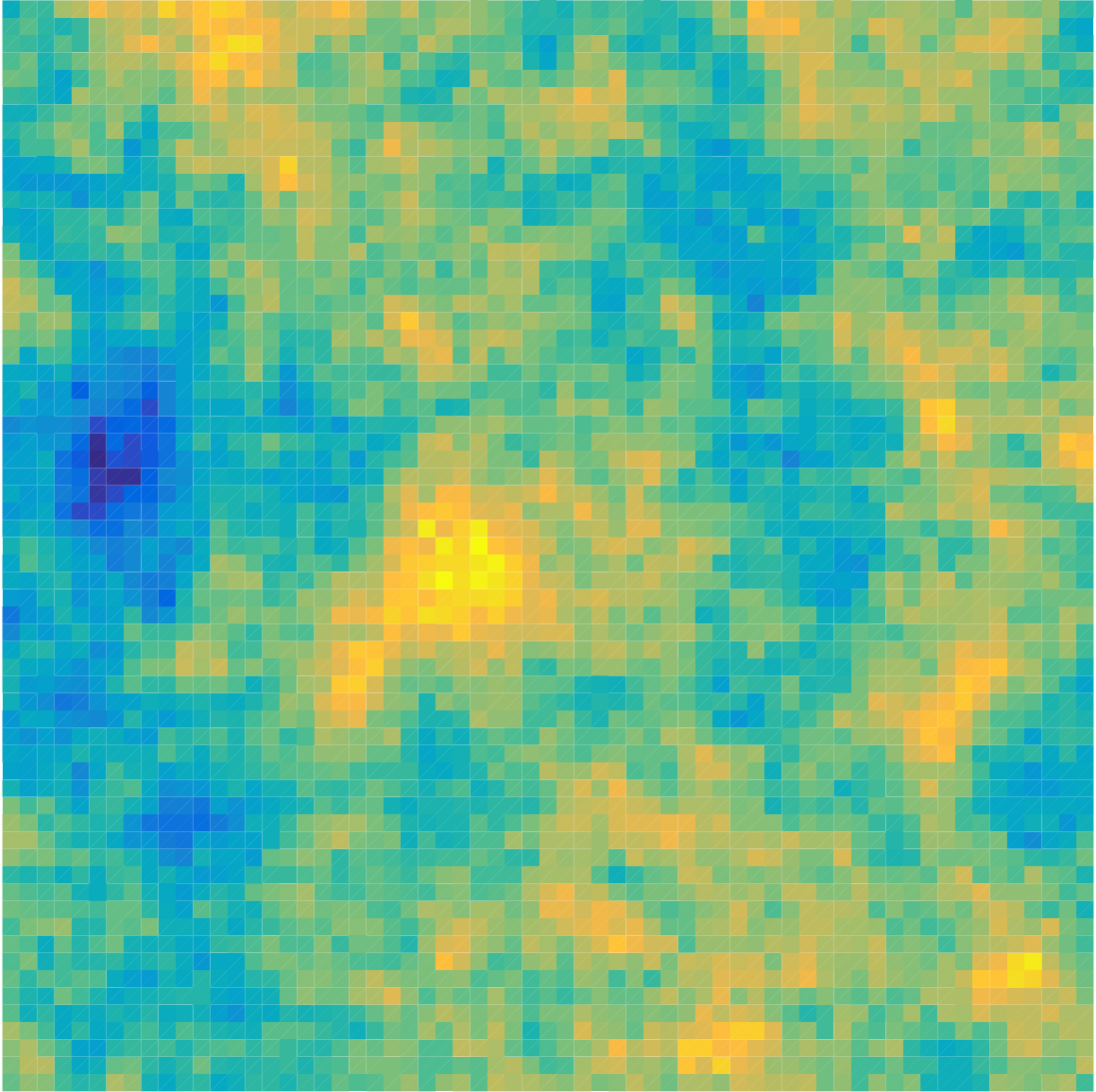}\hspace{5mm}%
\includegraphics[clip, width=0.3\textwidth]{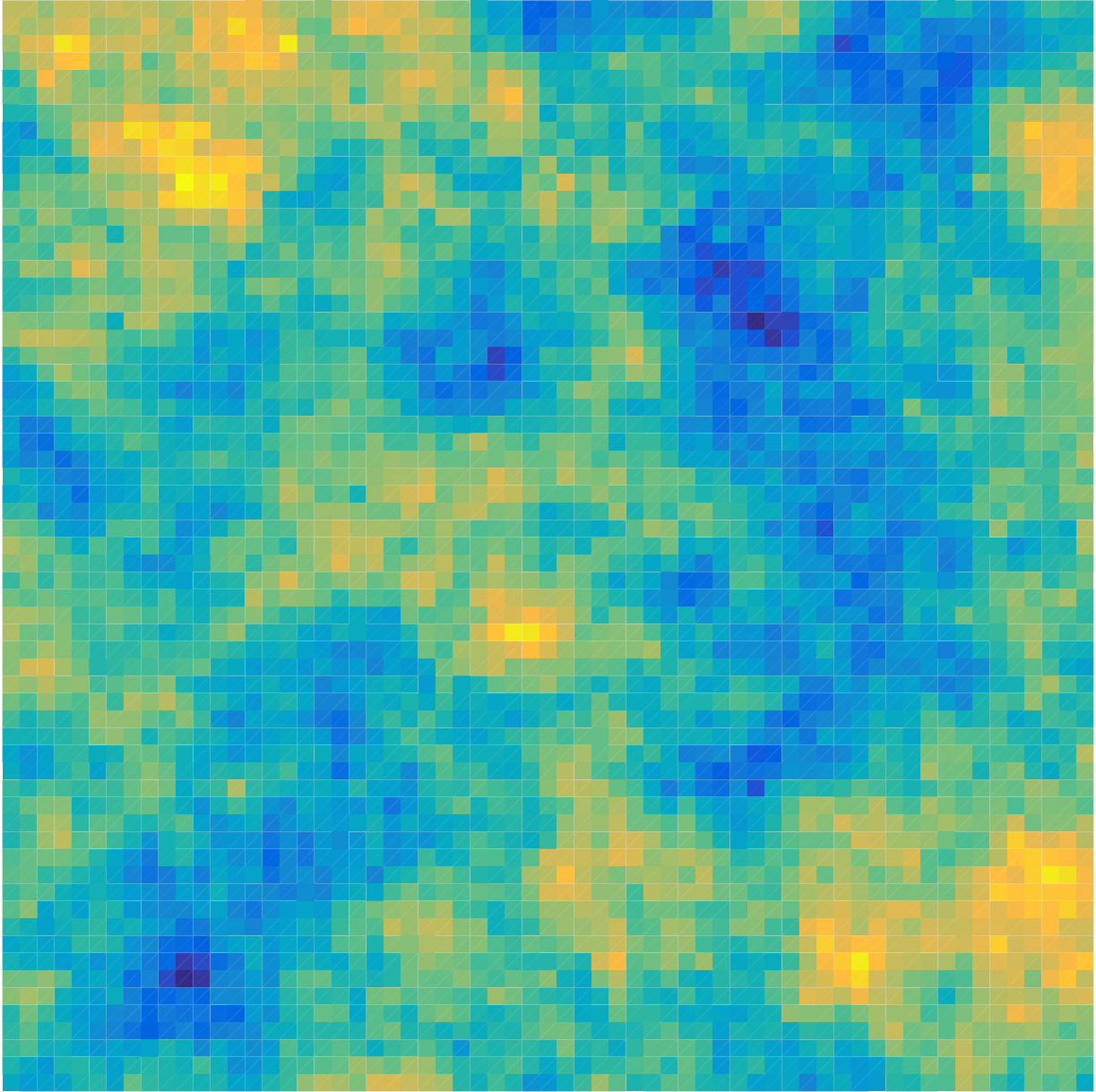}\hspace{5mm}%
\includegraphics[clip, width=0.3\textwidth]{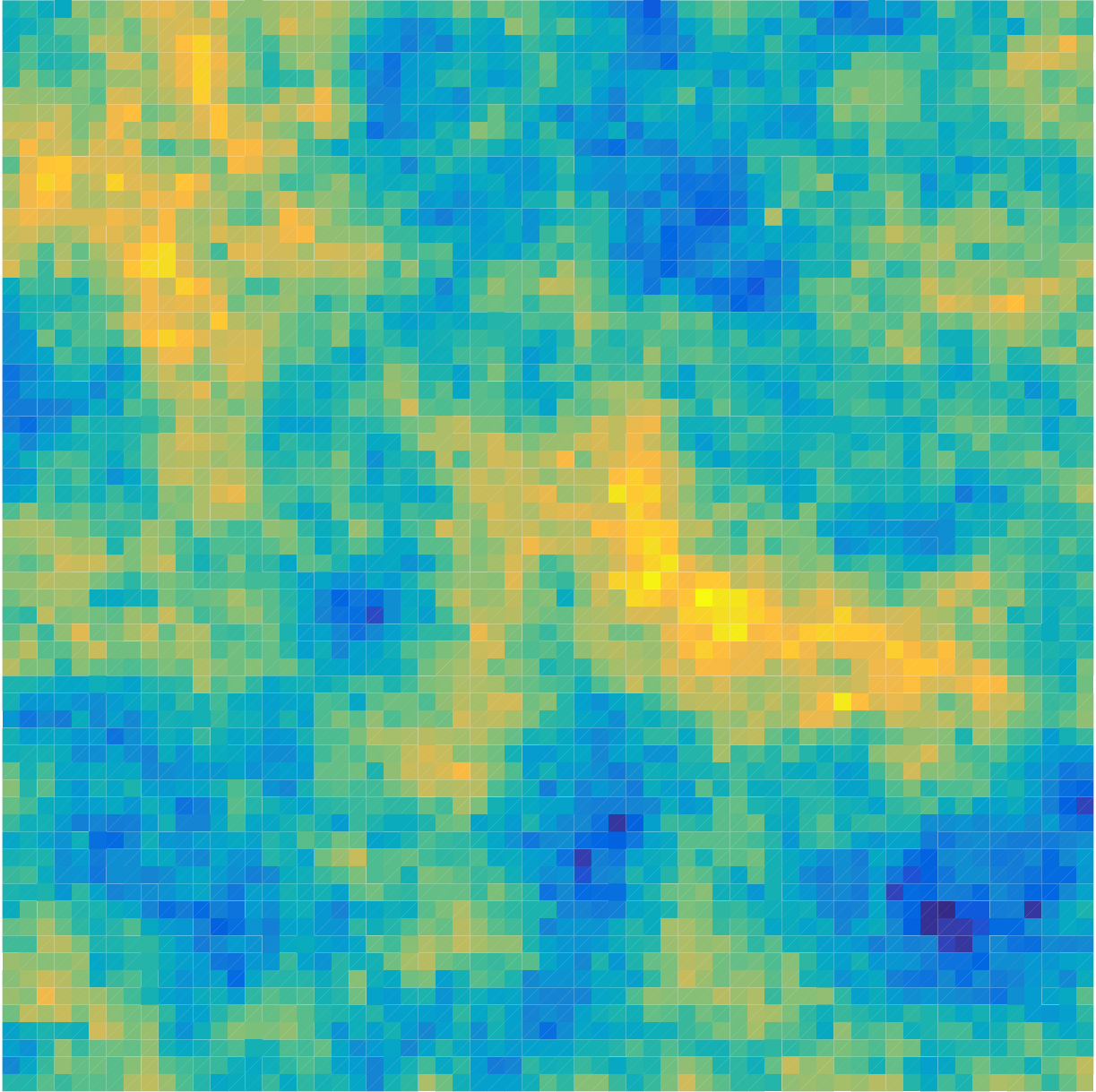}\\\vspace{5mm}%
\includegraphics[clip, width=0.3\textwidth]{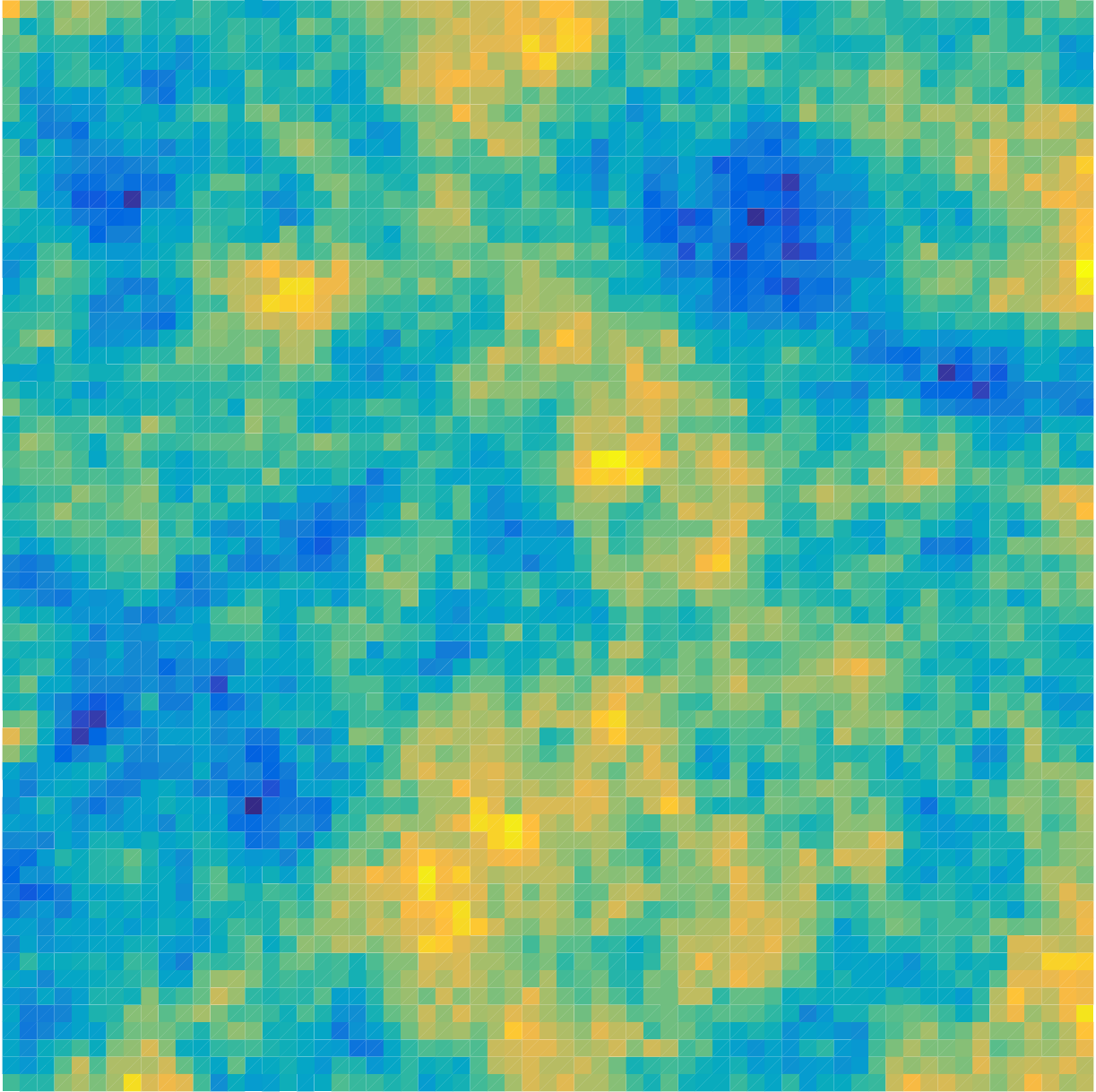}\hspace{5mm}%
\includegraphics[clip, width=0.3\textwidth]{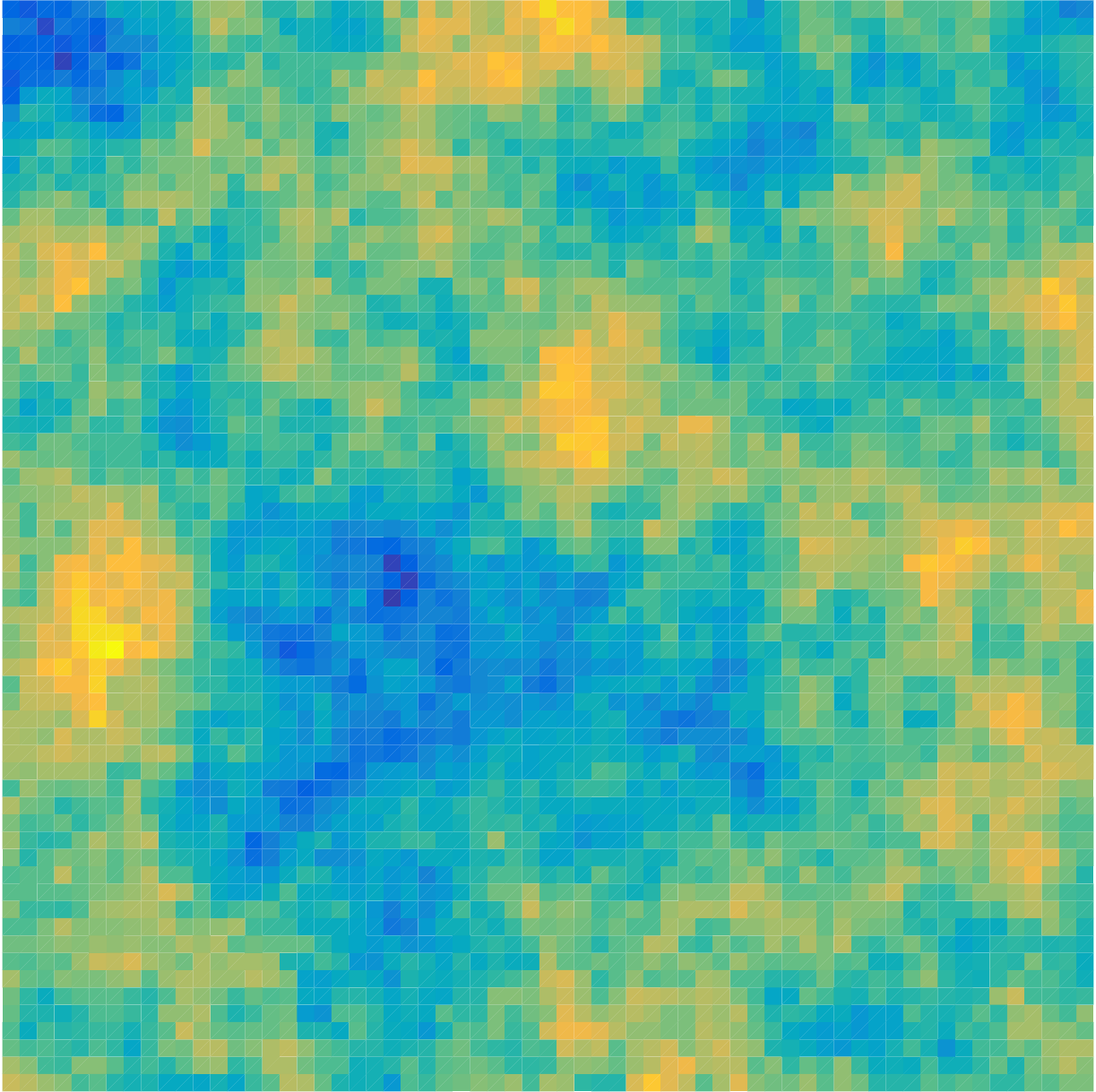}\hspace{5mm}%
\includegraphics[clip, width=0.3\textwidth]{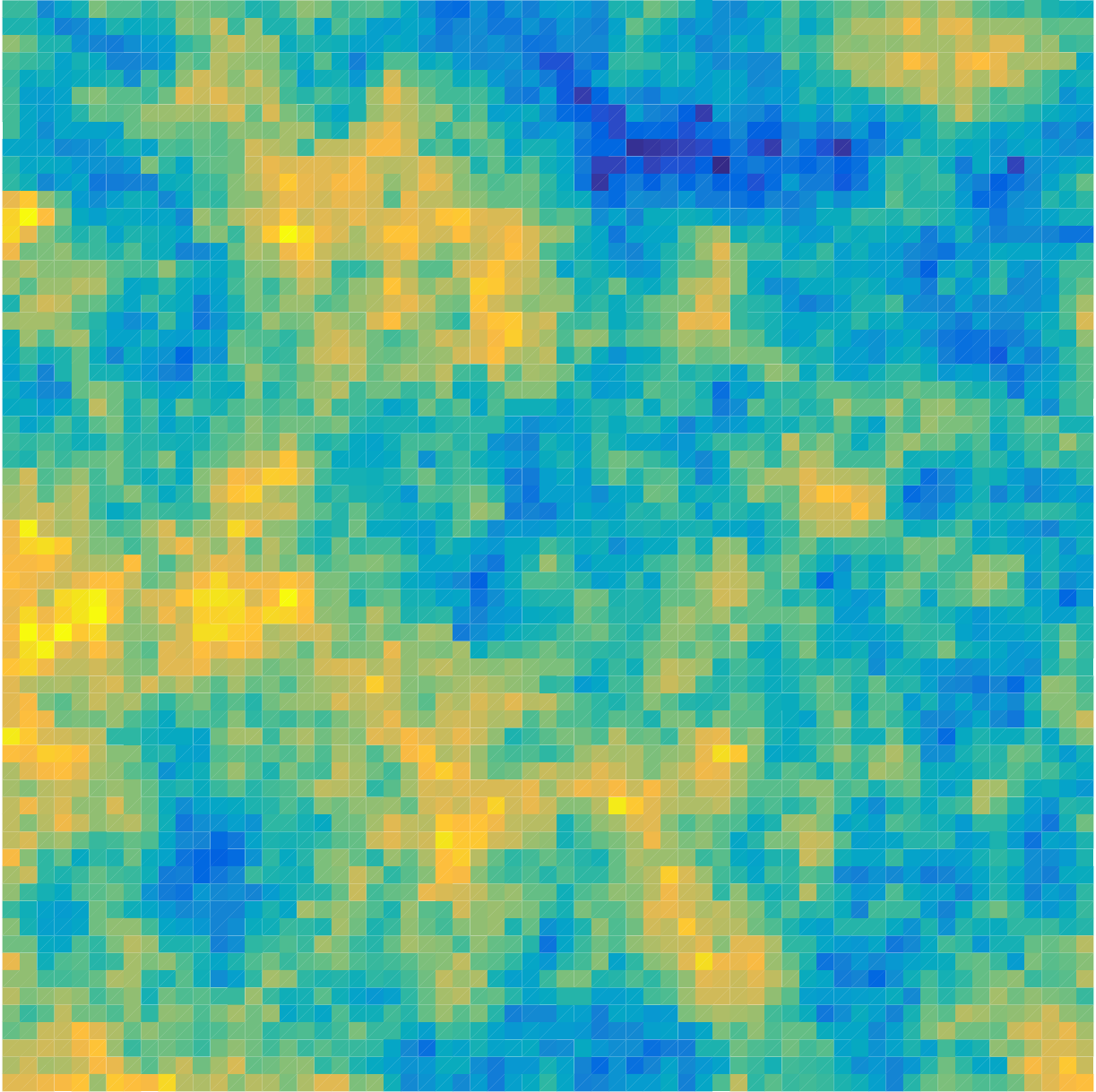}%
\caption{Samples of $\ZZ$ with a stationary covariance function from~\eqref{eq:matern} with $p=2$ and $\mu=1/2$.}
\label{fig:samplesexp}
\end{figure}
\begin{figure}
\includegraphics[ clip, width=0.3\textwidth]{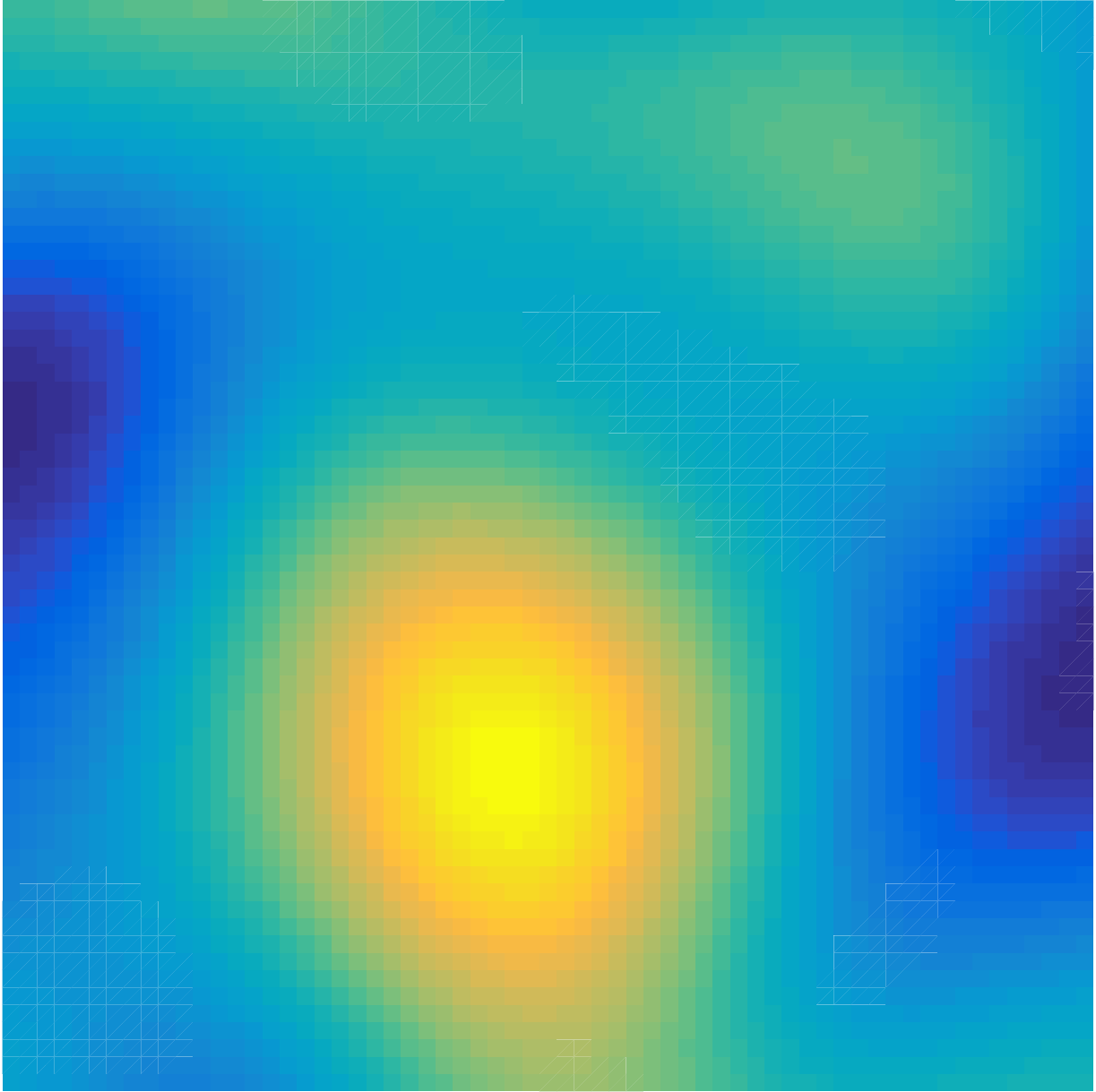}\hspace{5mm}%
\includegraphics[clip, width=0.3\textwidth]{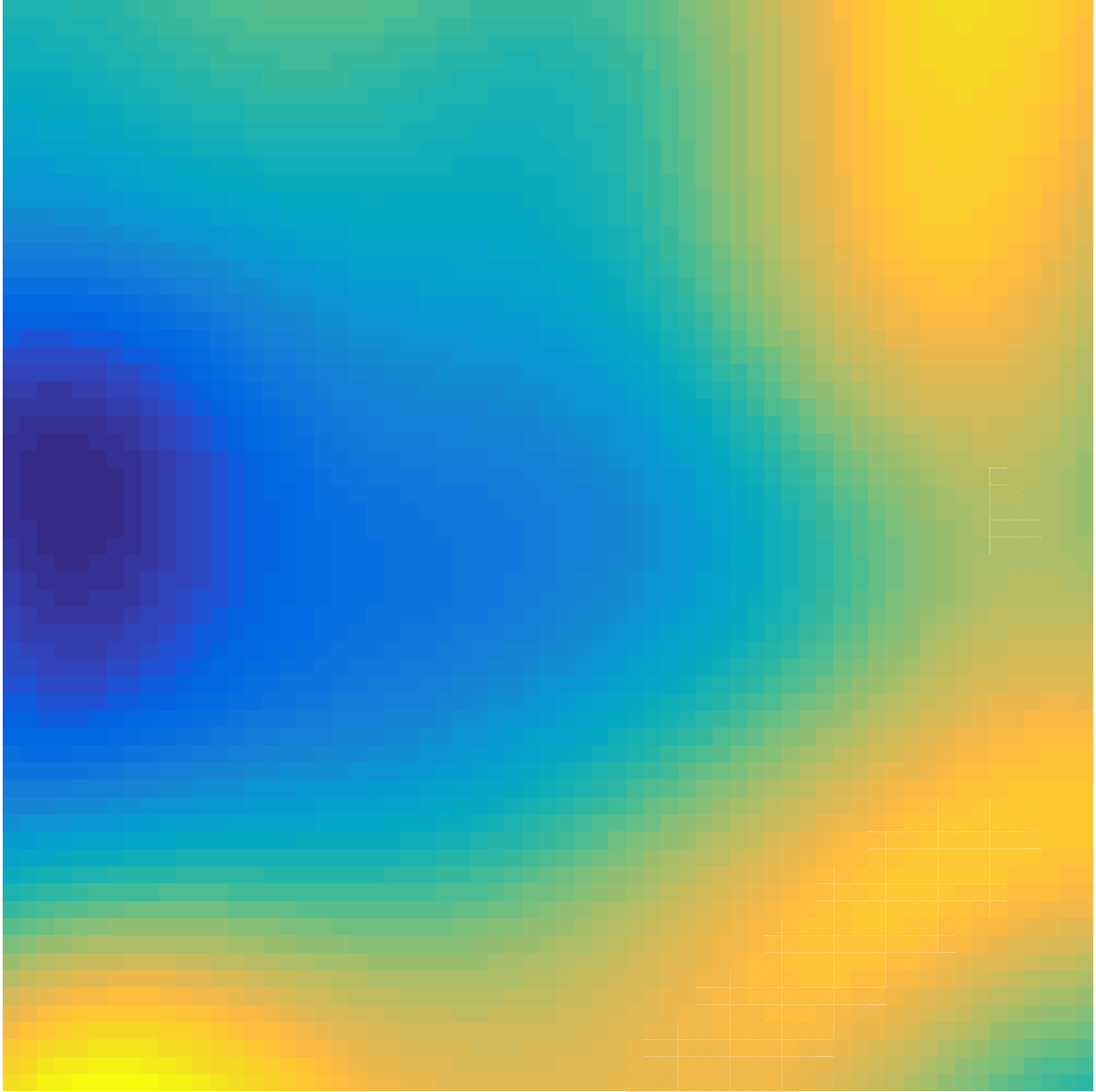}\hspace{5mm}%
\includegraphics[clip, width=0.3\textwidth]{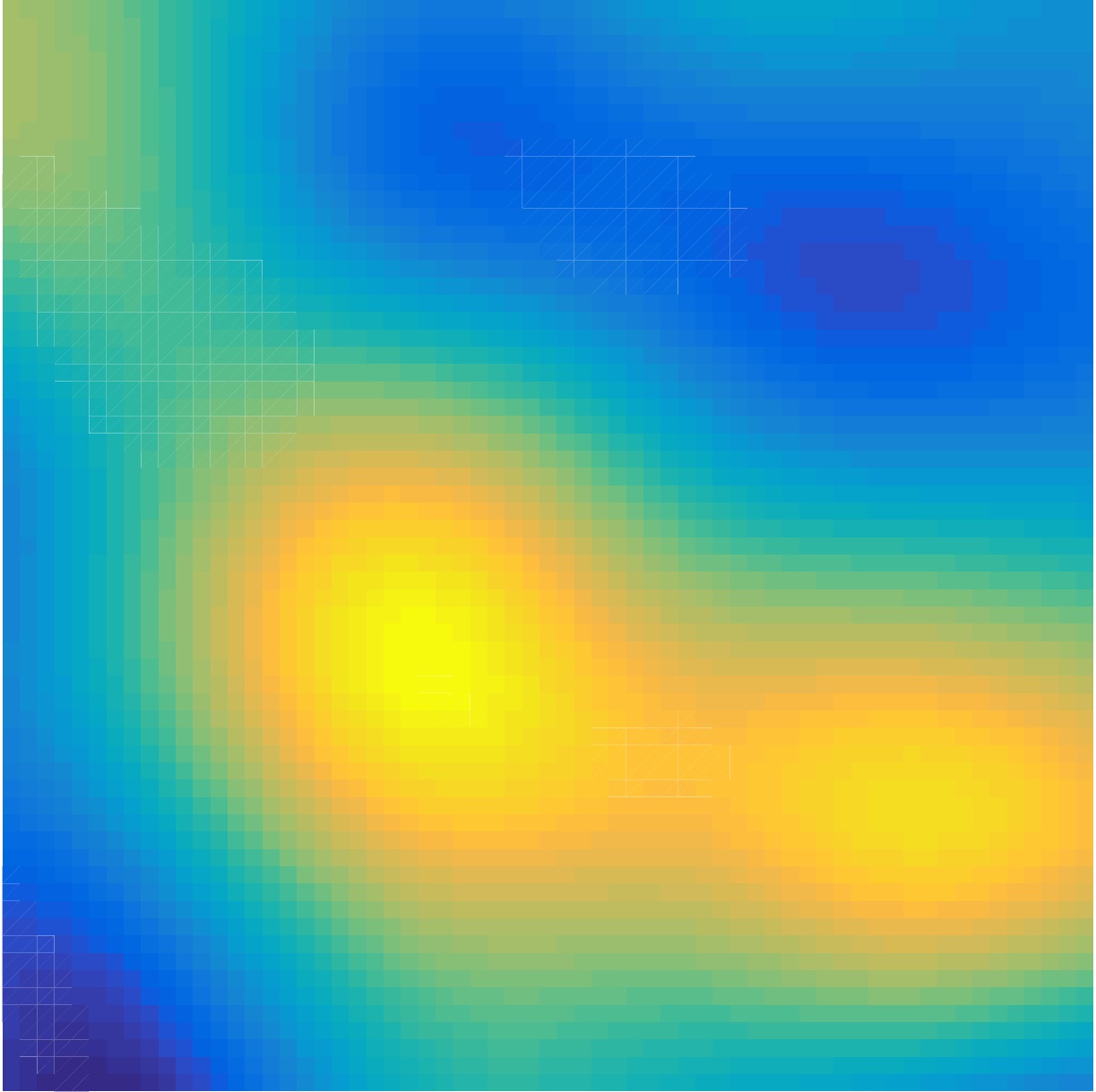}\\\vspace{5mm}%
\includegraphics[clip, width=0.3\textwidth]{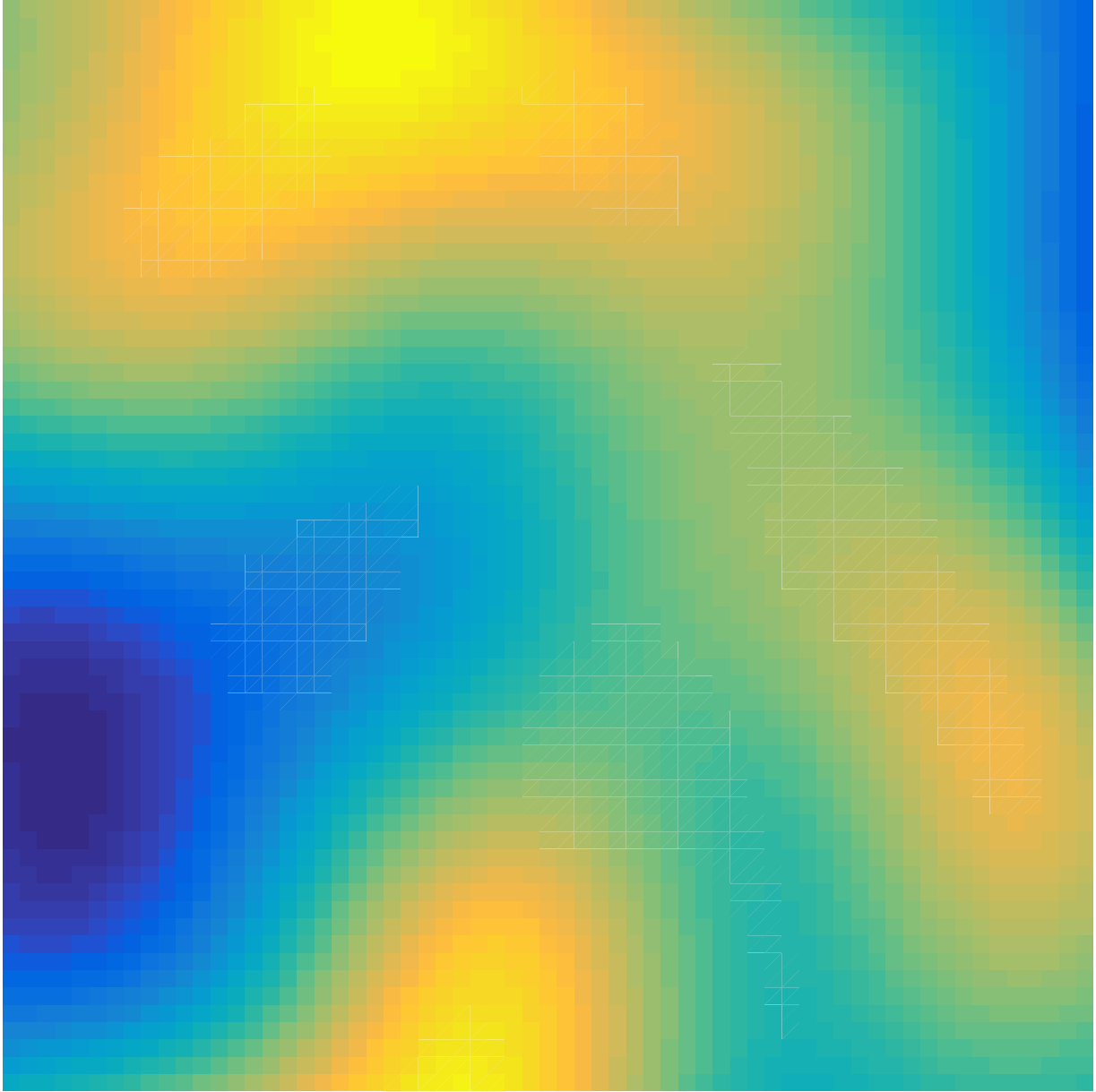}\hspace{5mm}%
\includegraphics[clip, width=0.3\textwidth]{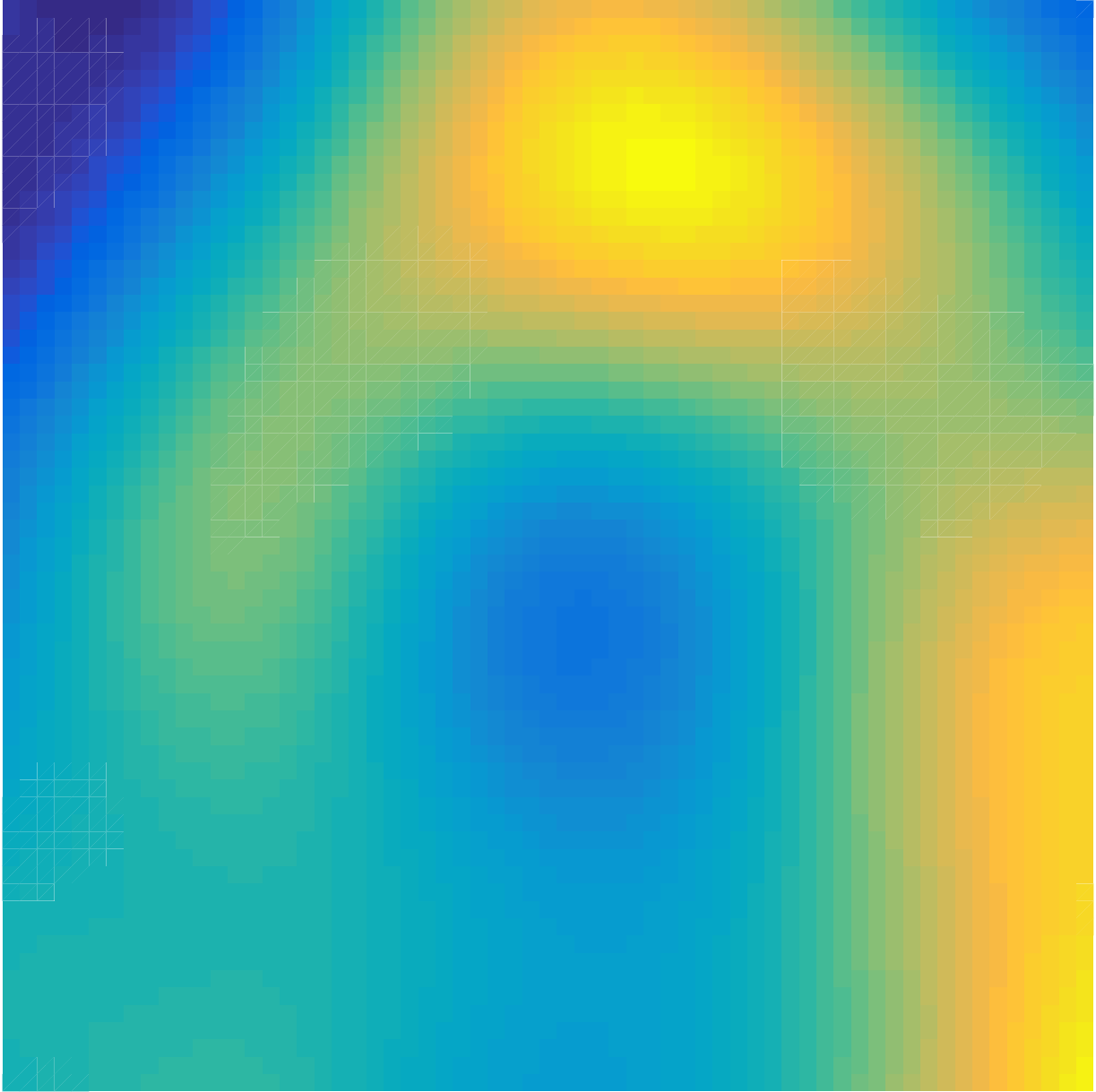}\hspace{5mm}%
\includegraphics[clip, width=0.3\textwidth]{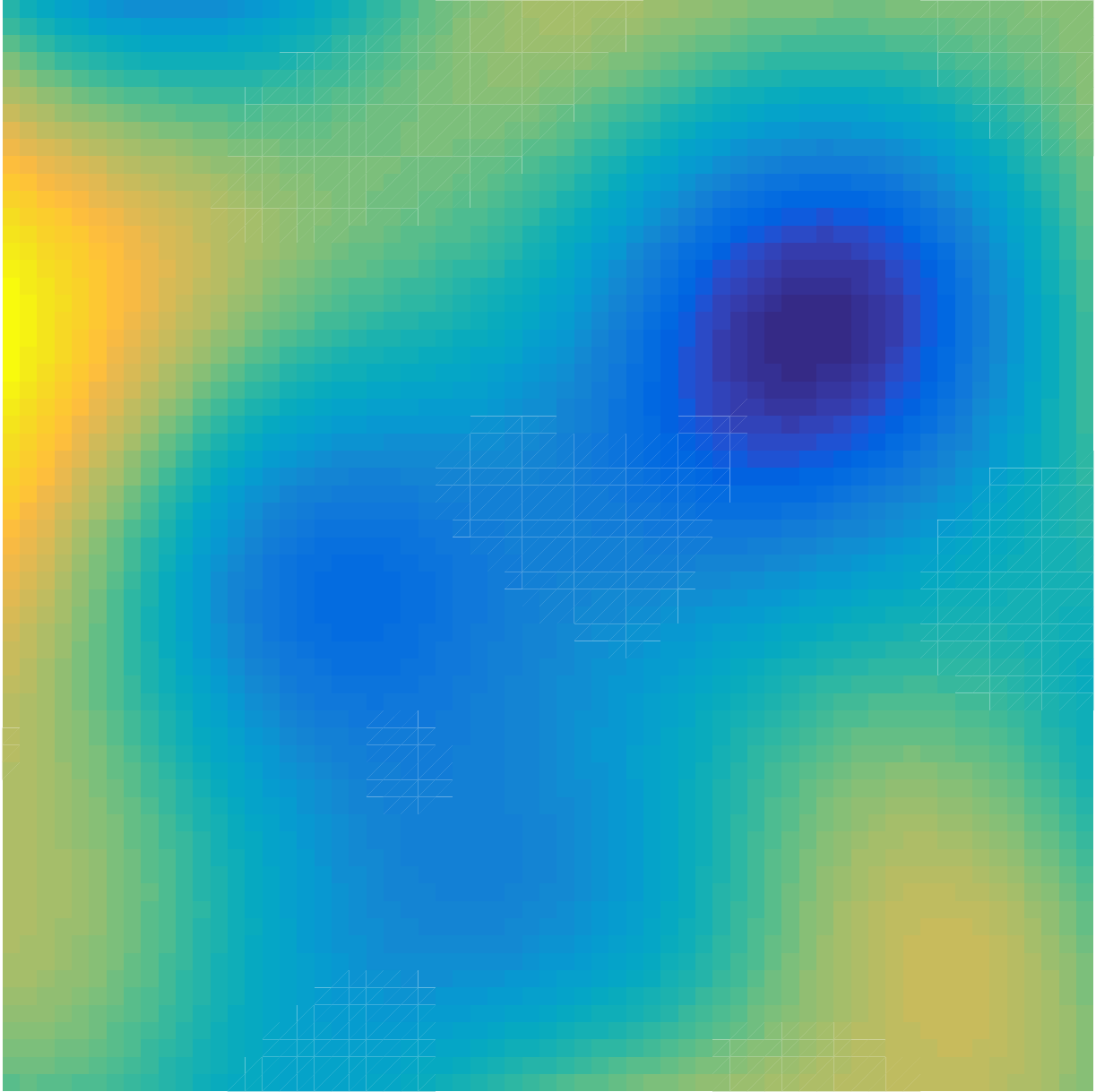}%
\caption{Samples of $\ZZ$ with a stationary covariance function from~\eqref{eq:matern} with $p=2$ and $\mu=\infty$.}
\label{fig:samplesgauss}
\end{figure}

To illustrate the challenging nature of handling these covariance matrices, Table~\ref{tab:cond} shows condition numbers of $\boldsymbol{C}$ for different problem sizes and the Mat\'ern covariance function~\eqref{eq:matern}.
\begin{table}
\begin{tabular}{l|cccccc}
$m=$ &  5 & 6 & 7 & 8 & 9 & 10\\ \hline
 $\lambda=1$ &2.0e+09 & 6.1e+16 & 8.6e+17 & 2.6e+19 & 1.8e+20 & 1.4e+20 \\ 
$\lambda=10^{-1}$ & 3.9e+07 & 5.5e+14 & 1.8e+17 & 8.4e+18 & 4.8e+20 & 4.6e+20 \\ 
$\lambda=10^{-2}$ & 6.5e+06 & 2.6e+12 & 2.7e+17 & 1.2e+19 & 3.3e+19 & 2.8e+20 \\ 
$\lambda=10^{-3}$ & 4.2e+06 & 9.4e+11 & 6.1e+17 & 4.2e+18 & 2.6e+19 & 1.1e+20
\end{tabular}
\caption{Condition numbers of $\boldsymbol{C}$ for the covariance function from~\eqref{eq:matern} with $\NN$ being a Sobol point set with $2^m$ points.}
\label{tab:cond}
\end{table}

For a performance comparison of Algorithm~\ref{alg1} and Algorithm~1\ref{alg2}, we consider the covariance function of the form~\eqref{eq:matern} with $p=2$, $\sigma=1$, and varying $\mu\in\{1/2,\infty\}$,
$\lambda\in\{1,10^{-1},10^{-2},10^{-3}\}$. We compute samples of $\ZZ(\bx,\omega)$ on a Sobol pointset with $2^{10}$ points. The results are plotted in Figure~\ref{fig:compare}
where we see the relative approximation error versus the computation time in seconds.  We observe that with respect to computational time, Algorithm~\ref{alg1} is superior in almost
all cases (particularly for smooth fields). However, keep in mind that according to Theorem~\ref{thm:sampleerror2}, Algorithm~\ref{alg1} needs up to $\mathcal{O}(\log_\kappa(\eps)N)$ extra storage,
while Algorithm~\ref{alg2} uses only $\mathcal{O}(\log(\log(\eps))N)$ extra storage units. (See Theorem~\ref{thm:sampleerror}, {where the quadratic convergence shows that $k\simeq \log(\log(\eps))$ is
sufficient to reach a given accuracy $\eps>0$. However, we have to mention that $k$ iterations of Algorithm~\ref{alg2} require $\mathcal{O}(3^k)$ arithmetic operations.)}

\begin{figure}
\psfrag{algorithm1}{\tiny Alg.~\ref{alg2}}
\psfrag{algorithm2}{\tiny Alg.~\ref{alg1}}
    \centering
     \includegraphics{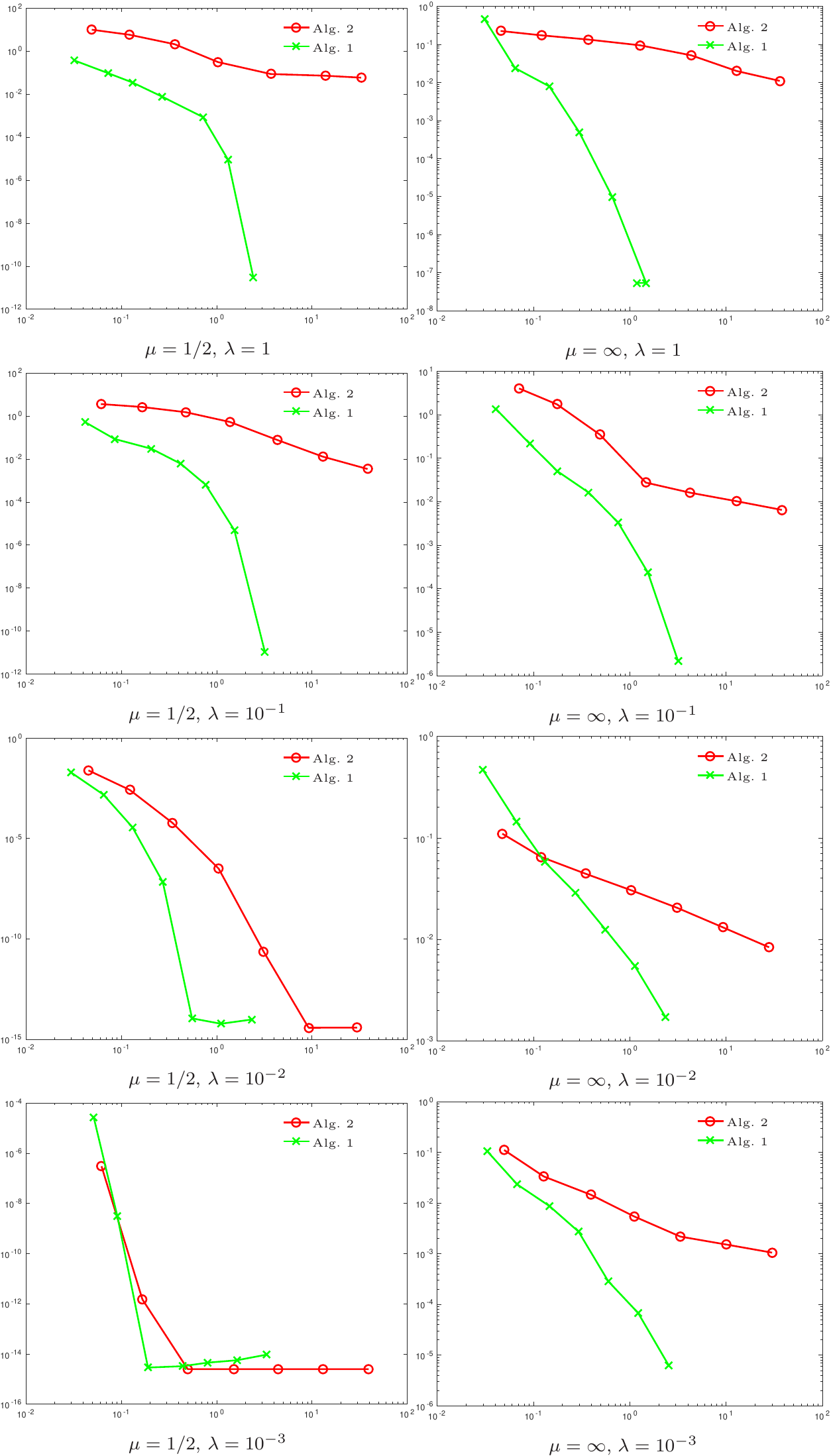}
 \caption{Comparison of Algorithm~\ref{alg2} and Algorithm~\ref{alg1}. We plot the relative error $|\ZZ_{k,p}(\bz)-\boldsymbol{C}^{1/2}\bz|/|\bz|$ versus  computation time in seconds.}
\label{fig:compare} 
\end{figure}
Figure~\ref{fig:compareTime} compares the two algorithms with the direct matrix square root provided by Matlab. We evaluate $\ZZ(\bx,\omega)$ on a Sobol pointset with size $2^m$ for $m\in\{1,\ldots,14\}$.
The number of iterations in both algorithm is set such that the relative error is smaller than $10^{-10}$ for the example from above with $p=2$, and varying $\mu\in\{1/2,\infty\}$,
$\lambda\in\{1,10^{-1},10^{-2},10^{-3}\}$.
We see that both, Algorithm~\ref{alg1}--\ref{alg2}, perform in linear time, whereas the direct approach comes closer to $\mathcal{O}(N^3)$. Even though our $H^2$-matrix library is programmed
entirely in Matlab (and thus nowhere near optimal performance), the breakthrough point at around $N=10^3$ shows that also small problems benefit from the speed up.
\begin{figure}
\psfrag{algorithm1}{\tiny Alg.~\ref{alg2}}
\psfrag{algorithm2}{\tiny Alg.~\ref{alg1}}
\psfrag{sqrtm}{\tiny direct}
\psfrag{time}[cc][cc]{\tiny time in seconds}
\psfrag{N}[cc][cc]{\tiny $N=|\NN|$}
\psfrag{OO}[cc][cc][1][25]{\tiny $\mathcal{O}(N)$}
    \centering
     \includegraphics[ clip, width=0.5\textwidth]{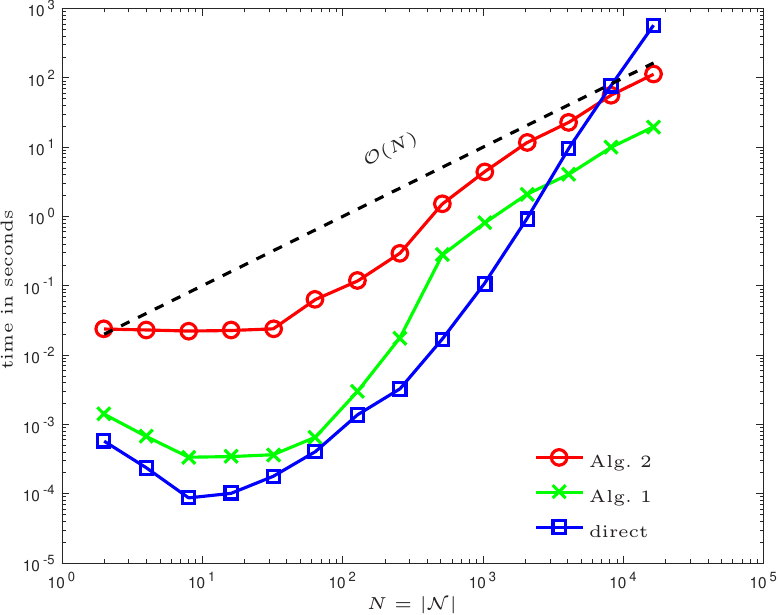}
     \caption{Computational time in seconds versus the number of evaluation points $N$. The direct approach uses Matlab's $\texttt{sqrtm}$ function.}
     \label{fig:compareTime}
\end{figure}

\FloatBarrier
\section{Lemmas for the proof of Theorem~\ref{thm:sampleerror2}}\label{section:method1}
First, we state a slight generalization of a well-known result.
\begin{lemma} \label{lem:sqrtm}
	Let $\boldsymbol{A},\boldsymbol{B}\in\R^{N\times N}$ be symmetric positive definite. Then, there holds
	\begin{align}\label{eq:sqstandard}
	\norm{\boldsymbol{A}^{1/2}-\boldsymbol{B}^{1/2}}{2}\leq (\lambda_{\rm min}(\boldsymbol{A})+\lambda_{\rm min}(\boldsymbol{B}))^{-1/2}\norm{\boldsymbol{A}-\boldsymbol{B}}{2},
	\end{align}
	as well as
	\begin{align}\label{eq:sqimproved}
	\norm{\boldsymbol{A}^{1/2}-\boldsymbol{B}^{1/2}}{2}\leq 3\norm{\boldsymbol{A}-\boldsymbol{B}}{2}^{1/2}.
	\end{align}%
\end{lemma}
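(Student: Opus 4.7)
The plan is to prove~\eqref{eq:sqstandard} directly from the matrix identity $\boldsymbol{A}-\boldsymbol{B}=\boldsymbol{A}^{1/2}\boldsymbol{X}+\boldsymbol{X}\boldsymbol{B}^{1/2}$ with $\boldsymbol{X}:=\boldsymbol{A}^{1/2}-\boldsymbol{B}^{1/2}$, and then to bootstrap~\eqref{eq:sqstandard} to obtain~\eqref{eq:sqimproved} by means of a uniform positive shift of both matrices.

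For~\eqref{eq:sqstandard}, observe that $\boldsymbol{X}$ is symmetric (since $\boldsymbol{A},\boldsymbol{B}$ are), so it admits a unit eigenvector $\bv\in\R^N$ with eigenvalue $\mu\in\R$ of maximal modulus, $|\mu|=\norm{\boldsymbol{X}}{2}$. Expanding the right-hand side of $\boldsymbol{A}^{1/2}\boldsymbol{X}+\boldsymbol{X}\boldsymbol{B}^{1/2}$ confirms the identity above, and testing it against $\bv$ from both sides yields
\begin{align*}
\bv^T(\boldsymbol{A}-\boldsymbol{B})\bv = \mu\bigl(\bv^T\boldsymbol{A}^{1/2}\bv+\bv^T\boldsymbol{B}^{1/2}\bv\bigr).
\end{align*}
Bounding each Rayleigh quotient from below by $\lambda_{\rm min}(\boldsymbol{A})^{1/2}$ and $\lambda_{\rm min}(\boldsymbol{B})^{1/2}$, respectively, and using the scalar estimate $(\sqrt{a}+\sqrt{b})^2\geq a+b$ for $a,b\geq 0$, I would divide through to obtain
\begin{align*}
\norm{\boldsymbol{X}}{2}\leq (\lambda_{\rm min}(\boldsymbol{A})^{1/2}+\lambda_{\rm min}(\boldsymbol{B})^{1/2})^{-1}\norm{\boldsymbol{A}-\boldsymbol{B}}{2}\leq (\lambda_{\rm min}(\boldsymbol{A})+\lambda_{\rm min}(\boldsymbol{B}))^{-1/2}\norm{\boldsymbol{A}-\boldsymbol{B}}{2},
\end{align*}
which is~\eqref{eq:sqstandard}.

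For~\eqref{eq:sqimproved}, set $\delta:=\norm{\boldsymbol{A}-\boldsymbol{B}}{2}$ and consider the shifted matrices $\widetilde{\boldsymbol{A}}:=\boldsymbol{A}+\delta\boldsymbol{I}$ and $\widetilde{\boldsymbol{B}}:=\boldsymbol{B}+\delta\boldsymbol{I}$, both symmetric positive definite. By construction, $\lambda_{\rm min}(\widetilde{\boldsymbol{A}}),\lambda_{\rm min}(\widetilde{\boldsymbol{B}})\geq \delta$ and $\widetilde{\boldsymbol{A}}-\widetilde{\boldsymbol{B}}=\boldsymbol{A}-\boldsymbol{B}$, so~\eqref{eq:sqstandard} applied to $\widetilde{\boldsymbol{A}},\widetilde{\boldsymbol{B}}$ gives $\norm{\widetilde{\boldsymbol{A}}^{1/2}-\widetilde{\boldsymbol{B}}^{1/2}}{2}\leq (2\delta)^{-1/2}\delta=\sqrt{\delta/2}$. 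Since $\boldsymbol{A}$ commutes with $\boldsymbol{I}$, the spectral theorem reduces $\norm{\boldsymbol{A}^{1/2}-\widetilde{\boldsymbol{A}}^{1/2}}{2}$ to the scalar maximum $\max_{\lambda}|\sqrt{\lambda+\delta}-\sqrt{\lambda}|$ over the (non-negative) eigenvalues $\lambda$ of $\boldsymbol{A}$, which is attained at $\lambda=0$ and equals $\sqrt{\delta}$; the analogous estimate holds for $\boldsymbol{B}$. The triangle inequality then yields
\begin{align*}
\norm{\boldsymbol{A}^{1/2}-\boldsymbol{B}^{1/2}}{2}\leq \sqrt{\delta}+\sqrt{\delta/2}+\sqrt{\delta}=(2+1/\sqrt{2})\sqrt{\delta}\leq 3\sqrt{\delta},
\end{align*}
which is~\eqref{eq:sqimproved}.

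There is no serious obstacle: the only non-routine ingredient is the recognition that the common shift $\delta\boldsymbol{I}$ simultaneously regularizes both matrices without altering their difference, thereby making~\eqref{eq:sqstandard} applicable with a guaranteed lower eigenvalue $\delta$. The constant $3$ is loose (reflecting the numerical value $2+1/\sqrt{2}<3$ in the triangle-inequality bookkeeping) and could in principle be sharpened, but it is sufficient for the application in Theorem~\ref{thm:sampleerror2}.
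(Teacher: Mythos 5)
Your proof is correct. For~\eqref{eq:sqimproved} you use essentially the same idea as the paper, namely regularization by a shift of size $\delta=\norm{\boldsymbol{A}-\boldsymbol{B}}{2}$ combined with the elementary bound $\norm{\boldsymbol{M}^{1/2}-(\boldsymbol{M}+\delta\boldsymbol{I})^{1/2}}{2}\leq\sqrt{\delta}$ and an application of~\eqref{eq:sqstandard}; the only difference is that you shift \emph{both} matrices (so the difference is unchanged and you pay two shift terms plus $\sqrt{\delta/2}$, giving $2+1/\sqrt{2}\leq 3$), whereas the paper shifts only $\boldsymbol{A}$ (paying one shift term $\sqrt{\delta}$ plus $2\sqrt{\delta}$ from~\eqref{eq:sqstandard} applied to the pair $(\boldsymbol{A}+\delta\boldsymbol{I},\boldsymbol{B})$, whose difference has norm at most $2\delta$). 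Both variants give the constant $3$; yours is marginally sharper in the bookkeeping. Where you genuinely depart from the paper is~\eqref{eq:sqstandard}: the paper simply cites it, while you prove it from scratch via the Sylvester-type identity $\boldsymbol{A}-\boldsymbol{B}=\boldsymbol{A}^{1/2}\boldsymbol{X}+\boldsymbol{X}\boldsymbol{B}^{1/2}$ with $\boldsymbol{X}=\boldsymbol{A}^{1/2}-\boldsymbol{B}^{1/2}$, tested against an eigenvector of $\boldsymbol{X}$ of maximal-modulus eigenvalue; this makes the lemma self-contained and in fact yields the sharper intermediate constant $(\lambda_{\rm min}(\boldsymbol{A})^{1/2}+\lambda_{\rm min}(\boldsymbol{B})^{1/2})^{-1}$, from which~\eqref{eq:sqstandard} follows by $\sqrt{a}+\sqrt{b}\geq\sqrt{a+b}$. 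One cosmetic remark: the maximum of $|\sqrt{\lambda+\delta}-\sqrt{\lambda}|$ is taken over the (strictly positive) eigenvalues of $\boldsymbol{A}$, so it is bounded by, rather than equal to, $\sqrt{\delta}$; and the degenerate case $\delta=0$ is trivial since then $\boldsymbol{A}=\boldsymbol{B}$. Neither point affects the validity of the argument.
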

\begin{proof}
 The estimate~\eqref{eq:sqstandard} is proved in~\cite[Lemma~2.2]{matrixsq}. To obtain~\eqref{eq:sqimproved}, let $\boldsymbol{U}\in\R^{N\times N}$ denote
 the orthonormal matrix that diagonalizes $\boldsymbol{A}$, i.e., $\boldsymbol{U}^T\boldsymbol{A}\boldsymbol{U}=\boldsymbol{D}$ for a {positive} diagonal matrix $\boldsymbol{D}\in\R^{N\times N}$.
 With $\boldsymbol{U}\boldsymbol{D}^{1/2}\boldsymbol{U}^T=\boldsymbol{A}^{1/2}$ and $\boldsymbol{U}\boldsymbol{U}^T=\boldsymbol{I}$, there holds for arbitrary $\alpha\geq 0$ 
 \begin{align*}
  \norm{\boldsymbol{A}^{1/2}-(\boldsymbol{A}+\alpha\boldsymbol{I})^{1/2}}{2}&=
  \norm{\boldsymbol{U}\boldsymbol{D}^{1/2}\boldsymbol{U}^T-\boldsymbol{U}(\boldsymbol{D}+\alpha\boldsymbol{I})^{1/2}\boldsymbol{U}^T}{2}\\
  &=\norm{\boldsymbol{D}^{1/2}-(\boldsymbol{D}+\alpha\boldsymbol{I})^{1/2}}{2}=\max_{1\leq i\leq N}\big|\sqrt{\boldsymbol{D}_{ii}+\alpha}-\sqrt{\boldsymbol{D}_{ii}}\big|\leq \sqrt{\alpha},
 \end{align*}
{where we used $x+y\leq (\sqrt{x}+\sqrt{y})^2$ and hence $\sqrt{x+y}\leq \sqrt{x}+\sqrt{y}$ for $x,y\geq 0$ in the last estimate.}
With $\alpha:=\norm{\boldsymbol{A}-\boldsymbol{B}}{2}$,~\eqref{eq:sqstandard} shows
\begin{align*}
 \norm{(\boldsymbol{A}+\alpha\boldsymbol{I})^{1/2}-\boldsymbol{B}^{1/2}}{2}\leq  2\alpha
 (\lambda_{\rm min}(\boldsymbol{A})+\lambda_{\rm min}(\boldsymbol{B})+\alpha)^{-1/2}\leq 2\sqrt{\alpha}.
\end{align*}
The combination of the last two estimates concludes the proof of~\eqref{eq:sqimproved}.
\end{proof}

 \begin{lemma}\label{lem:vandermondeplusqr}
  Let $\boldsymbol{M}\in\R^{N\times N}$ be symmetric positive definite and assume
that $0<\kappa<1$ and $C_\kappa>0$ are such
	that the sequence of all distinct eigenvalues $\lambda_1> \ldots>\lambda_M>0\in\R$ (for some $M\leq N$) of $\boldsymbol{M}$ satisfies
	$|\lambda_{i}-\lambda_{j}|\leq \lambda_1C_\kappa \kappa^{\min\{i,j\}}$ for all $1\leq i,j\leq M$.
	Given $1\leq k\leq M$ and $\bz\in\R^N$, define $\boldsymbol{Z}\in\R^{N\times k}$ by
	\begin{align}\label{eq:defZmatrix}
	\boldsymbol{Z}:=(\bz,\lambda_1^{-1}\boldsymbol{M}\bz,\lambda_1^{-2}\boldsymbol{M}^2\bz,\ldots,\lambda_1^{-(k-1)}\boldsymbol{M}^{k-1} \bz).
	\end{align}
	Consider the $QR$-factorization $\boldsymbol{Z}=\boldsymbol{Q}\boldsymbol{R}$, with $\boldsymbol{Q}\in
\R^{N\times k}$ satisfying $\boldsymbol{Q}^T \boldsymbol{Q} =\boldsymbol{I}_k$ and $\boldsymbol{R}\in\R^{k\times
k}$ upper triangular with non-negative diagonal entries (note that if $\boldsymbol{Z}$ has full rank, this ensures uniqueness of $\boldsymbol{Q}$ and $\boldsymbol{R}$). 
Then the diagonal entries of $\boldsymbol{R}$ satisfy
	\begin{align}\label{eq:qr}
	\boldsymbol{R}_{nn}\leq |\bz|C_\kappa^{n-1}\kappa^{(n-1)n/2}\quad\text{for all }1\leq n\leq k.
	\end{align}
 \end{lemma}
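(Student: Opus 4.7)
The plan is to identify $\boldsymbol{R}_{nn}$ with an orthogonal-projection residual in $\R^N$ and then to bound that residual by exhibiting a convenient monic polynomial that vanishes on the first $n-1$ distinct eigenvalues of $\lambda_1^{-1}\boldsymbol{M}$.

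First I would unpack the meaning of $\boldsymbol{R}_{nn}$. Since $\boldsymbol{Z}=\boldsymbol{Q}\boldsymbol{R}$ with $\boldsymbol{R}$ upper triangular, the first $n-1$ columns of $\boldsymbol{Q}$ span the same subspace as the first $n-1$ columns of $\boldsymbol{Z}$. Hence $\boldsymbol{R}_{nn}\boldsymbol{Q}_{\cdot,n}$ equals $\boldsymbol{Z}_{\cdot,n}$ minus its orthogonal projection onto $\mathrm{span}\{\boldsymbol{Z}_{\cdot,1},\ldots,\boldsymbol{Z}_{\cdot,n-1}\}$, so
\[
\boldsymbol{R}_{nn}\;=\;\min_{\alpha_1,\dots,\alpha_{n-1}\in\R}\Big|\boldsymbol{Z}_{\cdot,n}-\sum_{j=1}^{n-1}\alpha_j\boldsymbol{Z}_{\cdot,j}\Big|.
\]
Setting $\boldsymbol{N}:=\lambda_1^{-1}\boldsymbol{M}$, so that $\boldsymbol{Z}_{\cdot,j}=\boldsymbol{N}^{j-1}\bz$, I would rewrite the right-hand side as a minimum over monic polynomials of degree $n-1$,
\[
\boldsymbol{R}_{nn}\;=\;\min_{q\in\PP^{n-1}\text{ monic}}\,|q(\boldsymbol{N})\bz|.
\]

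The key computational step is to choose a good polynomial. Let $\mu_i:=\lambda_i/\lambda_1$ for $i=1,\ldots,M$, so $1=\mu_1>\mu_2>\ldots>\mu_M>0$ are the distinct eigenvalues of $\boldsymbol{N}$ and the hypothesis becomes $|\mu_i-\mu_j|\leq C_\kappa\kappa^{\min\{i,j\}}$. I would take $q(\mu):=\prod_{i=1}^{n-1}(\mu-\mu_i)$. Because $\boldsymbol{N}$ is symmetric, spectral calculus gives $\|q(\boldsymbol{N})\|_2=\max_{1\leq j\leq M}|q(\mu_j)|$, hence $|q(\boldsymbol{N})\bz|\leq\max_j|q(\mu_j)|\,|\bz|$. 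For $j\leq n-1$ the product has a zero factor, while for $j\geq n$ we have $\min\{i,j\}=i$ for each factor, so
\[
|q(\mu_j)|\;=\;\prod_{i=1}^{n-1}|\mu_j-\mu_i|\;\leq\;\prod_{i=1}^{n-1}C_\kappa\kappa^{i}\;=\;C_\kappa^{n-1}\kappa^{(n-1)n/2},
\]
which is exactly~\eqref{eq:qr}.

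The only mildly delicate issue is the uniqueness of $\boldsymbol{Q}$ and $\boldsymbol{R}$ when $\boldsymbol{Z}$ is rank-deficient; however, the characterisation of $\boldsymbol{R}_{nn}$ as the orthogonal-projection residual is always valid (and yields $\boldsymbol{R}_{nn}=0$ in the deficient case, so the bound is trivial there). Everything else is a textbook polynomial-interpolation-style estimate, so no further obstacle is expected.
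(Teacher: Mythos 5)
Your proposal is correct and follows essentially the same route as the paper: identify $\boldsymbol{R}_{nn}$ with the residual of the orthogonal projection of $\lambda_1^{-(n-1)}\boldsymbol{M}^{n-1}\bz$ onto the lower-order Krylov span, bound it by evaluating a degree-$(n-1)$ polynomial vanishing at $\lambda_1,\ldots,\lambda_{n-1}$ on the spectrum via diagonalization, and invoke the decay assumption factor by factor. The paper phrases the polynomial as the interpolation error of $f(x)=(x/\lambda_1)^{n-1}$ at $\lambda_1,\ldots,\lambda_{n-1}$, which is exactly your monic polynomial $q$ in the scaled variable, so the two arguments coincide.
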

\begin{proof}
Let $\bq^i$, $i=1,\ldots,k$ denote the orthonormal columns of $\boldsymbol{Q}$.
 By definition of the $QR$-factorization, there holds for $1\leq n\leq k$
 \begin{align*}
  \lambda_1^{-(n-1)}\boldsymbol{M}^{n-1}\bz = \sum_{i=1}^n \boldsymbol{R}_{in}\bq^i.
 \end{align*}
 Since the $\bq^i$ are orthogonal, the best approximation (with respect to $|\cdot|$) of $\lambda_1^{-(n-1)}\boldsymbol{M}^{n-1}\bz$ in ${\rm span}\{\bq^1,\ldots,\bq^{n-1}\}$ is given by
 $\sum_{i=1}^{n-1} \boldsymbol{R}_{in}\bq^i$ for all $n\geq 2$. Therefore, we obtain
\begin{align*}
 \boldsymbol{R}_{nn} = \Big|\lambda_1^{-(n-1)}\boldsymbol{M}^{n-1}\bz - \sum_{i=1}^{n-1} \boldsymbol{R}_{in}\bq^i\Big|\leq
 \min_{\bv\in {\rm span}\{\bz,\ldots,\boldsymbol{M}^{n-2}\bz\}} |\lambda_1^{-(n-1)}\boldsymbol{M}^{n-1}\bz-\bv|,
\end{align*}
where we used ${\rm span}\{\bz,\ldots,\boldsymbol{M}^{n-2}\bz\}\subseteq {\rm span}\{\bq^1,\ldots,\bq^{n-1}\}$ by definition of the $QR$-factorization (see also Remark~\ref{rem:qr}).
We may choose $\bv=p(\boldsymbol{M})\bz$, where $p(x)$ is the polynomial of degree $n-2$ interpolating $f(x):=(x/\lambda_1)^{n-1}$ at the points $x=\lambda_1,\ldots,\lambda_{n-1}$.
Since $\boldsymbol{M}$ is symmetric and positive definite, we may diagonalize it with an orthogonal matrix $\boldsymbol{U}\in\R^{N\times N}$, i.e., $\boldsymbol{U}^T\boldsymbol{M}\boldsymbol{U}=\boldsymbol{D}$ 
with a diagonal matrix $\boldsymbol{D}\in\R^{N\times N}$ containing the eigenvalues of $\boldsymbol{M}$. This allows us to conclude
\begin{align*} 
\boldsymbol{R}_{nn} &\leq \norm{f(\boldsymbol{M})-p(\boldsymbol{M})}{2}|\bz|=\norm{\boldsymbol{U}^T(f(\boldsymbol{D})-p(\boldsymbol{D}))\boldsymbol{U}}{2}|\bz|=
\norm{f(\boldsymbol{D})-p(\boldsymbol{D})}{2}|\bz|\\
&\leq \max_{x\in\{\lambda_1,\ldots,\lambda_M\}}|f(x)-p(x)||\bz| =\max_{x\in\{\lambda_{n},\ldots,\lambda_M\}}|f(x)-p(x)||\bz|.
\end{align*}
The function $f(x)-p(x)$ is a polynomial of degree $n-1$ with known zeros $\lambda_1,\ldots,\lambda_{n-1}$ and thus reads
\begin{align*}
 f(x)-p(x)=\alpha(x-\lambda_1)\cdots(x-\lambda_{n-1})
\end{align*}
for some leading coefficient $\alpha\in\R$. Differentiation reveals $\alpha (n-1)! = f^{(n-1)}(x) = (n-1)!\lambda_1^{-(n-1)}$ and hence $\alpha=\lambda_1^{-(n-1)}$.
This shows
\begin{align*}
  \boldsymbol{R}_{nn}\leq |\bz|\max_{n\leq i\leq M}\prod_{j=1}^{n-1}\frac{|\lambda_i-\lambda_j|}{\lambda_1}=|\bz|\prod_{j=1}^{n-1}\frac{|\lambda_M-\lambda_j|}{\lambda_1}.
\end{align*}
By the decay assumption on the $\lambda_i$ it
follows that
	\begin{align}\label{eq:max3}
	 \boldsymbol{R}_{nn}\leq |\bz|\prod_{j=1}^{n-1}(C_\kappa \kappa^j) = |\bz|C_{\kappa}^{n-1} \kappa^{n(n-1)/2}.
	\end{align}
	This concludes the proof.
\end{proof}

The next lemma shows that the matrices $\boldsymbol{Q}_j$ from Algorithm~\ref{alg1} are strongly tied to the matrices $\boldsymbol{Z}=\boldsymbol{Q}\boldsymbol{R}$ defined in Lemma~\ref{lem:vandermondeplusqr}.
\begin{lemma}\label{lem:qq}
 Given $\bz\in\R^N$ and let $\boldsymbol{M}\in\R^{N\times N}$ be symmetric positive definite.
 Call Algorithm~\ref{alg1} with $\boldsymbol{M}$, $\bz$, and $k\in\N$ to compute $k_0\leq k$ and $\boldsymbol{R}_j$, $\boldsymbol{Q}_j$ for all $1\leq j\leq k_0$.
 Define $\boldsymbol{Z},\boldsymbol{Q},\boldsymbol{R}$ satisfying $\boldsymbol{Z}=\boldsymbol{Q}\boldsymbol{R}$ as in Lemma~\ref{lem:vandermondeplusqr}.
 Then, $\boldsymbol{Q}_j$ (as defined in Algorithm~\ref{alg1}) for $1\leq j\leq k_0$ satisfies
 $\boldsymbol{Q}_j=\boldsymbol{Q}|_{\{1,\ldots,N\}\times\{1,\ldots,j\}}$, i.e., the first $j$ columns coincide and
\begin{align}\label{eq:rank2}
{\rm range}(\boldsymbol{Q}_{j})={\rm span}\{\bz,\ldots,\boldsymbol{M}^{j-1}\bz\}={\rm range}(\boldsymbol{Z}|_{\{1,\ldots,N\}\times\{1,\ldots,j\}})
\end{align}
for all $1\leq j\leq k_0$. Moreover, $\boldsymbol{Z}$ has full rank if and only if $k_0=k$.
\end{lemma}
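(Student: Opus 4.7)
The plan is to prove both assertions by induction on $j$, with the uniqueness of the $QR$-factorization under the positive-diagonal convention as the main tool. The base case $j=1$ is immediate: Algorithm~\ref{alg1} sets $\boldsymbol{Q}_1=\bz/|\bz|$, which equals the first column of the $\boldsymbol{Q}$ from the $QR$-factorization of $\boldsymbol{Z}$ (since the first column of $\boldsymbol{Z}$ is $\bz$ and $|\bz|>0$ is the corresponding diagonal entry of $\boldsymbol{R}$). The range identity in~\eqref{eq:rank2} then trivially holds.

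For the inductive step, assume $\boldsymbol{Q}_{j-1}$ coincides with the first $j-1$ columns of $\boldsymbol{Q}$ and that ${\rm range}(\boldsymbol{Q}_{j-1})={\rm span}\{\bz,\ldots,\boldsymbol{M}^{j-2}\bz\}$ is $(j-1)$-dimensional (this is guaranteed because $j-1<k_0$ means Algorithm~\ref{alg1} did not terminate earlier). The key observation is that by the induction hypothesis and the fact that the $(j-1,j-1)$-entry of the $\boldsymbol{R}$-factor of $\boldsymbol{Z}|_{\{1,\ldots,j-1\}}$ is strictly positive, we can expand $\bq^{j-1}=c\,\boldsymbol{M}^{j-2}\bz+\boldsymbol{v}$ with $c>0$ and $\boldsymbol{v}\in{\rm span}\{\bz,\ldots,\boldsymbol{M}^{j-3}\bz\}$. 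Multiplying by $\boldsymbol{M}$ yields $\boldsymbol{M}\bq^{j-1}=c\boldsymbol{M}^{j-1}\bz+\boldsymbol{M}\boldsymbol{v}$ with $\boldsymbol{M}\boldsymbol{v}\in{\rm span}\{\boldsymbol{M}\bz,\ldots,\boldsymbol{M}^{j-2}\bz\}\subseteq{\rm range}(\boldsymbol{Q}_{j-1})$. Hence the orthogonal components of $\boldsymbol{M}\bq^{j-1}$ and $\lambda_1^{-(j-1)}\boldsymbol{M}^{j-1}\bz$ with respect to ${\rm range}(\boldsymbol{Q}_{j-1})$ agree up to the strictly positive factor $c\lambda_1^{j-1}$. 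Normalizing this orthogonal component with a positive coefficient (which the non-negative-diagonal convention from Remark before the lemma enforces in both Algorithm~\ref{alg1} and the $QR$-factorization of $\boldsymbol{Z}$) produces the same unit vector. This is precisely the $j$-th column of both $\boldsymbol{Q}_j$ and $\boldsymbol{Q}$; moreover ${\rm range}(\boldsymbol{Q}_j)={\rm range}(\boldsymbol{Q}_{j-1})+\R\boldsymbol{M}\bq^{j-1}={\rm span}\{\bz,\ldots,\boldsymbol{M}^{j-1}\bz\}$, closing the induction and proving~\eqref{eq:rank2}.

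The equivalence ``$\boldsymbol{Z}$ has full rank iff $k_0=k$'' then follows immediately from~\eqref{eq:rank2}: if $k_0=k$ then ${\rm range}(\boldsymbol{Q}_k)$ has dimension $k$ so the columns of $\boldsymbol{Z}|_{\{1,\ldots,k\}}$ are linearly independent; conversely if $k_0<k$ the stopping criterion~1(c) together with the argument above shows that $\boldsymbol{M}\bq^{k_0}\in{\rm range}(\boldsymbol{Q}_{k_0})$, which forces the Krylov subspace ${\rm span}\{\bz,\ldots,\boldsymbol{M}^{k-1}\bz\}$ to have dimension at most $k_0<k$, so $\boldsymbol{Z}$ is rank-deficient.

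The main obstacle is the sign-tracking in the inductive step: uniqueness of the $QR$-factorization of $\boldsymbol{Z}|_{\{1,\ldots,j\}}$ immediately gives that $\boldsymbol{Q}_j$ agrees with the first $j$ columns of $\boldsymbol{Q}$ \emph{up to a diagonal matrix with $\pm 1$ entries}, but the stronger equality requires the observation that both constructions produce positive multiples of the \emph{same} orthogonal component. This is exactly where the inductive positivity of $c$ is needed; without it the proof would only yield agreement up to signs, which is the weaker conclusion discussed in the Remark preceding the lemma.
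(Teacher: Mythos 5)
Your proof is correct and follows essentially the same route as the paper: induction on $j$ exploiting the $QR$-relation $(\boldsymbol{Q}_{j-1},\boldsymbol{M}\bq^{j-1})=\boldsymbol{Q}_j\boldsymbol{R}_j$ together with the non-negative-diagonal convention, plus the Krylov structure of $\boldsymbol{Z}$ for the rank equivalence. If anything, you are more explicit than the paper about the column-coincidence claim (the paper proves the range identity and the rank equivalence and leaves the sign question to the uniqueness of the $QR$-factorization noted in the preceding remark), and you handle the "only if" direction by contraposition where the paper runs a direct induction; both are minor variations of the same argument.
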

\begin{proof}
Let $\bq^j$ denote the $j$-th column of $\boldsymbol{Q}_j$ and note that by definition of Algorithm~\ref{alg1} we have
\begin{align}\label{eq:diag}
(\boldsymbol{R}_j)_{jj}> 0\quad\text{for all }1\leq j\leq k_0. 
\end{align}
In order to prove~\eqref{eq:rank2}, we first show
\begin{align}\label{eq:rank22}
{\rm range}(\boldsymbol{Q}_{j})={\rm span}\{\bz,\ldots,\boldsymbol{M}^{j-1}\bz\}%\quad\text{and}\quad \boldsymbol{Q}_j=\boldsymbol{Q}|_{\{1,\ldots,N\}\times\{1,\ldots,j\}}
\end{align}
for all $1\leq j\leq k_0$ by induction. To that end, note that $\boldsymbol{Q}_1=\bq^1=\bz/|\bz|$ and consequently~\eqref{eq:rank22} holds for $j=1$.
Assume~\eqref{eq:rank22} holds for all $1\leq j< j_0\leq k_0$.
By construction of the matrices in Algorithm~\ref{alg1}, we have
\begin{align}\label{eq:rank}
 (\boldsymbol{Q}_{j_0-1},\boldsymbol{M}\bq^{j_0-1})=\boldsymbol{Q}_{j_0}\boldsymbol{R}_{j_0}.
\end{align}
%Since $\boldsymbol{Q}_{j_0-1}$ is already orthogonal and $\boldsymbol{R}_{j_0}$ is upper triangular, we have
%\begin{align}\label{eq:idd}
%\boldsymbol{Q}_{j_0}|_{\{1,\ldots,N\}\times\{1,\ldots,j_0-1\}}=\boldsymbol{Q}_{j_0-1}.
%\end{align}
By the induction assumption, $\bq^{j_0-1}\in {\rm span}\{\bz,\ldots,\boldsymbol{M}^{j_0-2}\bz\}$. Thus,~\eqref{eq:rank} and the fact that $\boldsymbol{R}_{j_0}$
is regular (by~\eqref{eq:diag}) imply
\begin{align*}
	{\rm range}(\boldsymbol{Q}_{j_0})={\rm span}\{{\rm range}(\boldsymbol{Q}_{j_0-1}), \boldsymbol{M}\boldsymbol{q}^{j_0-1}\}\subseteq {\rm span}\{\bz,\ldots,\boldsymbol{M}^{j_0-1}\bz\}.
\end{align*}
The fact that $\boldsymbol{Q}_{j_0}$ is orthogonal (and hence its range is $j_0$ dimensional) shows even equality, that is
\begin{align}\label{eq:iid1}
	{\rm range}(\boldsymbol{Q}_{j_0})= {\rm span}\{\bz,\ldots,\boldsymbol{M}^{j_0-1}\bz\}.
\end{align}
This concludes the induction, and proves~\eqref{eq:rank22} for all $1\leq j\leq k_0$.
The second equation in~\eqref{eq:rank2} follows by definition of $\boldsymbol{Z}$.

To see the remainder of the statement, we first assume $k_0=k$ and proceed to prove that $\boldsymbol{Z}$ has full rank.
To that end, we apply~\eqref{eq:rank2} with $j=k$ to see that ${\rm range}(\boldsymbol{Z})={\rm range}(\boldsymbol{Q}_k)$ is $k$-dimensional and therefore $\boldsymbol{Z}$ has full rank.

% For all $2\leq j\leq k$ we see from~\eqref{eq:rank}, since $\boldsymbol{Q}_{j-1}$ is already orthogonal and $\boldsymbol{R}_j$ has non-negative diagonal entries, 
% that we have $\boldsymbol{Q}_{j}|_{\{1,\ldots,N\}\times\{1,\ldots,j-1\}}=\boldsymbol{Q}_{j-1}$.
% \begin{align*}
% {\rm range}(\bq^1,\ldots,\bq^j)={\rm range}(\boldsymbol{Q}_j)={\rm span}\{\bz,\ldots,\boldsymbol{M}^{j-1}\bz\}
% \end{align*}
% for all $1\leq j\leq k$. Since $\boldsymbol{Z}$ has full rank and $\boldsymbol{R}$ is upper triangular, we also have
% \begin{align*}
% {\rm range}(\boldsymbol{Q}|_{\{1,\ldots,N\}\times\{1,\ldots,j\}})={\rm span}\{\bz,\ldots,\boldsymbol{M}^{j-1}\bz\}
% \end{align*}
% for all $1\leq j\leq k$. Since both matrices are orthogonal, there holds
% \begin{align*}
% \boldsymbol{Q}|_{\{1,\ldots,N\}\times\{1,\ldots,j\}}=(\bq^1,\ldots,\bq^j)=\boldsymbol{Q}_j\quad\text{for all }1\leq j\leq k.
% \end{align*}

For the converse implication, assume that $\boldsymbol{Z}$ has full rank.
 We prove $k_0=k$ by induction.
By construction, we have $(\boldsymbol{R}_1)_{11}=1$ and thus $k_0\geq 1$.
Assume $k_0\geq j_0$ for some $j_0<k$. Then, since $(\boldsymbol{R}_j)_{jj}\neq 0$ for all $1\leq j< j_0$, the identity~\eqref{eq:rank2} shows ${\rm range}(\boldsymbol{Z}|_{\{1,\ldots,N\}\times\{1,\ldots,j\}})={\rm range}(\boldsymbol{Q}_{j})$ for all $j<j_0$.
From this, we argue that 
\begin{align*}
\bq^{j_0-1}\in {\rm range}(\boldsymbol{Z}|_{\{1,\ldots,N\}\times\{1,\ldots,j_0-1\}})\setminus{\rm range}(\boldsymbol{Z}|_{\{1,\ldots,N\}\times\{1,\ldots,j_0-2\}}),
\end{align*}
{which, by definition of $\boldsymbol{Z}=(\bz,\lambda_1^{-1}\boldsymbol{M}\bz,\ldots,\lambda_1^{-(k-1)}\boldsymbol{M}^{k-1}\bz)$, shows that $\bq^{j_0-1}=\sum_{i=0}^{j_0-2}\alpha_i\boldsymbol{M}^i\bz$ for some $\alpha_i\in\R$ with $\alpha_{j_0-2}\neq 0$.
Consequently, we obtain $\boldsymbol{M}\bq^{j_0-1}=\sum_{i=0}^{j_0-2}\alpha_i\boldsymbol{M}^{i+1}\bz\in 
{\rm range}(\boldsymbol{Z}|_{\{1,\ldots,N\}\times\{1,\ldots,j_0\}})\setminus{\rm range}(\boldsymbol{Z}|_{\{1,\ldots,N\}\times\{1,\ldots,j_0-1\}})$.
Since ${\rm range}(\boldsymbol{Z}|_{\{1,\ldots,N\}\times\{1,\ldots,j_0-1\}})={\rm range}(\boldsymbol{Q}_{j_0-1})$, this implies the identity}
${\rm range}((\boldsymbol{Q}_{j_0-1},\boldsymbol{M}\bq^{j_0-1}))={\rm range}(\boldsymbol{Z}|_{\{1,\ldots,N\}\times\{1,\ldots,j_0\}}) $
and therefore the matrix $(\boldsymbol{Q}_{j_0-1},\boldsymbol{M}\bq^{j_0-1})$ has full rank.
Hence,~\eqref{eq:rank} implies that $\boldsymbol{R}_{j_0}$ has full rank, which in particular implies $(\boldsymbol{R}_{j_0})_{j_0j_0}\neq 0$ and thus $k_0\geq j_0+1$.
This concludes the induction and shows $k_0=k$.
\end{proof}

The following result proves that if Algorithm~\ref{alg1} terminates in less than $k$ steps (due to the criterion in step~1(c)), the quantity $\boldsymbol{M}^{1/2}\bz$ is computed exactly.
\begin{lemma}\label{lem:qr1.4}
 Let $\bz\in\R^N$ and let $\boldsymbol{M}\in\R^{N\times N}$ be symmetric positive definite. 
  Call Algorithm~\ref{alg1} with $\boldsymbol{M}$, $\bz$, and $k\in\N$ to compute $k_0\leq k$ as well as $\boldsymbol{Q}_j$ for all $1\leq j\leq k_0$.
	Define $\boldsymbol{U}_{k_0}=\boldsymbol{Q}_{k_0}^T\boldsymbol{M}\boldsymbol{Q}_{k_0}$ as in Algorithm~\ref{alg1}. If $k_0<k$, there holds
	\begin{align*}
	 \boldsymbol{M}^{1/2}\bz=\boldsymbol{Q}_{k_0}\boldsymbol{U}_{k_0}^{1/2}\boldsymbol{Q}_{k_0}^T\bz.
	\end{align*}
\end{lemma}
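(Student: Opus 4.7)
The plan is to show that early termination forces the Krylov subspace $\KK_{k_0}$ to be $\boldsymbol{M}$-invariant, from which the claim follows by a direct eigenspace argument.

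First, I would analyze what $k_0 < k$ tells us. Since the algorithm exits only via step~1(c), we must have $(\boldsymbol{R}_{k_0+1})_{k_0+1,k_0+1}=0$ in the $QR$-factorization $\boldsymbol{Q}_{k_0+1}\boldsymbol{R}_{k_0+1}=(\boldsymbol{Q}_{k_0},\boldsymbol{M}\bq^{k_0})$. Because the first $k_0$ columns of this augmented matrix are already orthonormal (they are $\boldsymbol{Q}_{k_0}$), the vanishing of the last pivot means exactly that the new column $\boldsymbol{M}\bq^{k_0}$ lies in $\mathrm{range}(\boldsymbol{Q}_{k_0})$. Combined with the observation that $\boldsymbol{M}\bq^{j}\in\mathrm{span}\{\boldsymbol{M}\bz,\ldots,\boldsymbol{M}^{j}\bz\}\subseteq\KK_{k_0}$ for every $j<k_0$ (using Lemma~\ref{lem:qq}), we conclude $\boldsymbol{M}\,\mathrm{range}(\boldsymbol{Q}_{k_0})\subseteq\mathrm{range}(\boldsymbol{Q}_{k_0})$. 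Writing this as $\boldsymbol{M}\boldsymbol{Q}_{k_0}=\boldsymbol{Q}_{k_0}\boldsymbol{A}$ for some $\boldsymbol{A}\in\R^{k_0\times k_0}$ and left-multiplying by $\boldsymbol{Q}_{k_0}^T$ identifies $\boldsymbol{A}=\boldsymbol{U}_{k_0}$, so
\begin{align*}
\boldsymbol{M}\boldsymbol{Q}_{k_0}=\boldsymbol{Q}_{k_0}\boldsymbol{U}_{k_0}.
\end{align*}

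Next, I would pass from this intertwining identity to the analogous one for the square root. Since $\boldsymbol{M}$ is symmetric positive definite, so is $\boldsymbol{U}_{k_0}=\boldsymbol{Q}_{k_0}^T\boldsymbol{M}\boldsymbol{Q}_{k_0}$, and $\boldsymbol{U}_{k_0}^{1/2}$ is well-defined. Diagonalize $\boldsymbol{U}_{k_0}=\boldsymbol{V}\Lambda\boldsymbol{V}^T$ with $\boldsymbol{V}$ orthogonal and $\Lambda$ diagonal with positive entries $\mu_1,\ldots,\mu_{k_0}$. Then $\boldsymbol{M}(\boldsymbol{Q}_{k_0}\boldsymbol{V})=\boldsymbol{Q}_{k_0}\boldsymbol{U}_{k_0}\boldsymbol{V}=(\boldsymbol{Q}_{k_0}\boldsymbol{V})\Lambda$, so the columns of $\boldsymbol{Q}_{k_0}\boldsymbol{V}$ are orthonormal eigenvectors of $\boldsymbol{M}$ with eigenvalues $\mu_i$; applying $\boldsymbol{M}^{1/2}$ multiplies each by $\sqrt{\mu_i}$, yielding $\boldsymbol{M}^{1/2}\boldsymbol{Q}_{k_0}\boldsymbol{V}=\boldsymbol{Q}_{k_0}\boldsymbol{V}\Lambda^{1/2}$ and hence
\begin{align*}
\boldsymbol{M}^{1/2}\boldsymbol{Q}_{k_0}=\boldsymbol{Q}_{k_0}\boldsymbol{V}\Lambda^{1/2}\boldsymbol{V}^T=\boldsymbol{Q}_{k_0}\boldsymbol{U}_{k_0}^{1/2}.
\end{align*}

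Finally, I would use $\bz=|\bz|\bq^{1}\in\mathrm{range}(\boldsymbol{Q}_{k_0})$, so $\boldsymbol{Q}_{k_0}\boldsymbol{Q}_{k_0}^T\bz=\bz$, and combine with the previous display to obtain
\begin{align*}
\boldsymbol{M}^{1/2}\bz=\boldsymbol{M}^{1/2}\boldsymbol{Q}_{k_0}\boldsymbol{Q}_{k_0}^T\bz=\boldsymbol{Q}_{k_0}\boldsymbol{U}_{k_0}^{1/2}\boldsymbol{Q}_{k_0}^T\bz,
\end{align*}
which is the asserted identity.

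The only subtle point is the passage from the polynomial-level identity $\boldsymbol{M}^{j}\boldsymbol{Q}_{k_0}=\boldsymbol{Q}_{k_0}\boldsymbol{U}_{k_0}^{j}$ (which is immediate by induction from the invariance relation) to the non-polynomial $\boldsymbol{M}^{1/2}$; handling it via explicit simultaneous diagonalization as above avoids any appeal to functional calculus or to the comparison of interpolating polynomials for $\sqrt{\cdot}$ on the spectra of $\boldsymbol{M}$ and $\boldsymbol{U}_{k_0}$, and it is the cleanest way to make the argument rigorous.
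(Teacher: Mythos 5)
Your proof is correct, and it reaches the same structural insight as the paper --- that early termination makes $\mathrm{range}(\boldsymbol{Q}_{k_0})$ an invariant subspace of $\boldsymbol{M}$ containing $\bz$ --- but both halves of your argument are executed differently. For the invariance, the paper goes through the rank characterization in Lemma~\ref{lem:qq}: $k_0<k$ means the Krylov matrix $\boldsymbol{Z}$ of Lemma~\ref{lem:vandermondeplusqr} is rank deficient while its first $k_0$ columns are independent, whence $\mathrm{range}(\boldsymbol{M}\boldsymbol{Q}_{k_0})\subseteq\mathrm{range}(\boldsymbol{Z})=\mathrm{range}(\boldsymbol{Q}_{k_0})$; you instead read the invariance directly off the vanishing pivot $(\boldsymbol{R}_{k_0+1})_{k_0+1,k_0+1}=0$ (which places $\boldsymbol{M}\bq^{k_0}$ in $\mathrm{range}(\boldsymbol{Q}_{k_0})$) together with the Krylov structure of the earlier columns, which is more elementary and avoids the detour through $\boldsymbol{Z}$. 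For the passage to the square root, the paper completes $\boldsymbol{Q}_{k_0}$ to an orthogonal $\overline{\boldsymbol{Q}}=(\boldsymbol{Q}_{k_0},\boldsymbol{Q}_\perp)$, writes $\boldsymbol{M}^{1/2}=\overline{\boldsymbol{Q}}\,(\overline{\boldsymbol{Q}}^T\boldsymbol{M}\overline{\boldsymbol{Q}})^{1/2}\,\overline{\boldsymbol{Q}}^T$ and uses that invariance forces the block-diagonal form with blocks $\boldsymbol{U}_{k_0}^{1/2}$ and $(\boldsymbol{Q}_\perp^T\boldsymbol{M}\boldsymbol{Q}_\perp)^{1/2}$; you prove the intertwining $\boldsymbol{M}\boldsymbol{Q}_{k_0}=\boldsymbol{Q}_{k_0}\boldsymbol{U}_{k_0}$ and then diagonalize $\boldsymbol{U}_{k_0}$ so that the columns of $\boldsymbol{Q}_{k_0}\boldsymbol{V}$ are eigenvectors of $\boldsymbol{M}$, giving $\boldsymbol{M}^{1/2}\boldsymbol{Q}_{k_0}=\boldsymbol{Q}_{k_0}\boldsymbol{U}_{k_0}^{1/2}$ without any completion. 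Your route is cleaner and more self-contained for this exact-termination case; the paper's block formulation has the advantage that it is literally the degenerate case $\boldsymbol{S}=\boldsymbol{0}$ of the perturbation argument in Proposition~\ref{prop:alg2}, where the off-diagonal block is merely small rather than zero, so the same decomposition is reused there. Both arguments correctly finish with $\bz=\boldsymbol{Q}_{k_0}\boldsymbol{Q}_{k_0}^T\bz$.
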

\begin{proof}
 If $k_0<k$ then Lemma~\ref{lem:qq} shows that $\boldsymbol{Z}$ as defined in Lemma~\ref{lem:vandermondeplusqr} does not have full rank. Moreover, the identity ~\eqref{eq:rank2}
 %(with $k$ in the call to Algorithm~\ref{alg1}
 %replaced by $k_0$) 
 shows
 that  $\boldsymbol{Z}|_{\{1,\ldots,N\}\times\{1,\ldots,k_0\}}$ has full rank. 
 By definition of $\boldsymbol{Z}$, this implies ${\rm range}(\boldsymbol{Z}|_{\{1,\ldots,N\}\times\{1,\ldots,k_0\}})={\rm range}(\boldsymbol{Z})$.
Therefore,~\eqref{eq:rank2} shows 
\begin{align}\label{eq:rr}
\begin{split}
{\rm range}(\boldsymbol{M}\boldsymbol{Q}_{k_0})&= {\rm range}(\boldsymbol{M}\boldsymbol{Z}|_{\{1,\ldots,N\}\times\{1,\ldots,k_0\}})\\
&\subseteq {\rm range}(\boldsymbol{Z})= {\rm range}(\boldsymbol{Z}|_{\{1,\ldots,N\}\times\{1,\ldots,k_0\}})={\rm range}(\boldsymbol{Q}_{k_0}).
\end{split}
\end{align}
Let $\overline{\boldsymbol{Q}}\in\R^{N\times N}$ be an orthonormal matrix such that its first $k_0$ columns coincide with $\boldsymbol{Q}_{k_0}$, i.e., $\overline{\boldsymbol{Q}}=(\boldsymbol{Q}_{k_0},\boldsymbol{Q}_{\perp})$ 
for some orthonormal $\boldsymbol{Q}_\perp\in\R^{N\times (N-k_0)}$.
We obtain
\begin{align}\label{eq:Mrr}
 \boldsymbol{M}^{1/2}=\overline{\boldsymbol{Q}}\,\overline{\boldsymbol{Q}}^T\boldsymbol{M}^{1/2}\overline{\boldsymbol{Q}}\,\overline{\boldsymbol{Q}}^T=\overline{\boldsymbol{Q}}\,(\overline{\boldsymbol{Q}}^T\boldsymbol{M}\overline{\boldsymbol{Q}})^{1/2}\,\overline{\boldsymbol{Q}}^T.
\end{align}
There holds
\begin{align*}
 \overline{\boldsymbol{Q}}^T\boldsymbol{M}\overline{\boldsymbol{Q}}=\begin{pmatrix}\boldsymbol{Q}_{k_0}^T\boldsymbol{M}\boldsymbol{Q}_{k_0} & \boldsymbol{Q}_{k_0}^T\boldsymbol{M}\boldsymbol{Q}_{\perp}\\
                                \boldsymbol{Q}_{\perp}^T\boldsymbol{M}\boldsymbol{Q}_{k_0} & \boldsymbol{Q}_{\perp}^T\boldsymbol{M}\boldsymbol{Q}_{\perp}
                               \end{pmatrix}.
\end{align*}
The invariance property~\eqref{eq:rr} shows $\boldsymbol{Q}_{\perp}^T\boldsymbol{M}\boldsymbol{Q}_{k_0}=\boldsymbol{0}$, and by symmetry also $\boldsymbol{Q}_{k_0}^T\boldsymbol{M}\boldsymbol{Q}_{\perp}=\boldsymbol{0}$. Therefore, we have
\begin{align*}
  \overline{(\boldsymbol{Q}}^T\boldsymbol{M}\overline{\boldsymbol{Q}})^{1/2}=\begin{pmatrix}\boldsymbol{U}_{k_0}^{1/2} & \boldsymbol{0}\\
                                \boldsymbol{0} & (\boldsymbol{Q}_{\perp}^T\boldsymbol{M}\boldsymbol{Q}_{\perp})^{1/2}
                               \end{pmatrix}.
\end{align*}
This and~\eqref{eq:Mrr}, together with $\bz\in{\rm range}(\boldsymbol{Q}_{k_0})$, show $\boldsymbol{M}^{1/2}\bz = \overline{\boldsymbol{Q}}\,(\overline{\boldsymbol{Q}}^T\boldsymbol{M}\overline{\boldsymbol{Q}})^{1/2}\,\overline{\boldsymbol{Q}}^T\bz=\boldsymbol{Q}_{k_0}\boldsymbol{U}_{k_0}^{1/2}\boldsymbol{Q}_{k_0}^T\bz$ and conclude the proof.
\end{proof}

The following result is the main tool to prove Theorem~\ref{thm:sampleerror2}~(i).
\begin{lemma}\label{lem:qr1.5}
	Let $\bz\in\R^N$ and let $\boldsymbol{M}\in\R^{N\times N}$ be symmetric positive definite. 
	Call Algorithm~\ref{alg1} with $\boldsymbol{M}$, $\bz$, and $k\in\N$ to compute $k_0\leq k$ as well as $\boldsymbol{Q}_j$ for all $1\leq j\leq k_0$.
	Let $\boldsymbol{U}_{k_0}=\boldsymbol{Q}_{k_0}^T\boldsymbol{M}\boldsymbol{Q}_{k_0}$ be defined as in Algorithm~\ref{alg1}.
	Then, there holds
	\begin{align*}
	 &\frac{|\boldsymbol{M}^{1/2}\bz-\boldsymbol{Q}_{k_0}\boldsymbol{U}_{k_0}^{1/2}\boldsymbol{Q}_{k_0}^T\bz|}{|\bz|} \leq
	 \begin{cases} \displaystyle
	 {\sqrt{2\norm{\boldsymbol{M}}{2}}}\frac{4r^{{2}}}{r-1}r^{-k} & \text{ if }k_0=k,\\
	\displaystyle 0 & \text{ if } k_0<k,
	\end{cases}
	\end{align*}
	where
	 \begin{align}\label{eq:r}
 r:= \frac{\lambda_{\rm max}(\boldsymbol{M})+\lambda_{\rm min}(\boldsymbol{M})}{\lambda_{\rm max}(\boldsymbol{M})-\lambda_{\rm min}(\boldsymbol{M})}>1.
\end{align}
\end{lemma}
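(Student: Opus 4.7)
The plan is to split the argument into the trivial case $k_0 < k$ and the generic case $k_0 = k$, and in the latter reduce the matrix problem to scalar polynomial approximation of $\sqrt{x}$ on the spectrum of $\boldsymbol{M}$. For $k_0 < k$, there is nothing to do: Lemma~\ref{lem:qr1.4} already establishes that $\boldsymbol{M}^{1/2}\bz = \boldsymbol{Q}_{k_0}\boldsymbol{U}_{k_0}^{1/2}\boldsymbol{Q}_{k_0}^T\bz$ exactly, so the error is zero. The entire remaining work concerns $k_0 = k$.

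The key structural ingredient is a Galerkin-exactness property: for every polynomial $q\in\PP^{k-1}$ one has
\begin{align*}
 \boldsymbol{Q}_k\, q(\boldsymbol{U}_k)\, \boldsymbol{Q}_k^T \bz \;=\; q(\boldsymbol{M})\,\bz.
\end{align*}
I would prove this by induction on the monomial degree $j$: by Lemma~\ref{lem:qq} one has $\boldsymbol{M}^j\bz\in\text{range}(\boldsymbol{Q}_k)$ for $0\leq j\leq k-1$, so $\boldsymbol{Q}_k\boldsymbol{Q}_k^T\boldsymbol{M}^j\bz=\boldsymbol{M}^j\bz$, and then using $\boldsymbol{U}_k=\boldsymbol{Q}_k^T\boldsymbol{M}\boldsymbol{Q}_k$ one obtains $\boldsymbol{U}_k^{j+1}\boldsymbol{Q}_k^T\bz = \boldsymbol{Q}_k^T\boldsymbol{M}\boldsymbol{M}^j\bz=\boldsymbol{Q}_k^T\boldsymbol{M}^{j+1}\bz$ as long as $j+1\leq k-1$, which is exactly the range needed.

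With this exactness, for any $q\in\PP^{k-1}$ the triangle inequality gives
\begin{align*}
 |\boldsymbol{M}^{1/2}\bz-\boldsymbol{Q}_k\boldsymbol{U}_k^{1/2}\boldsymbol{Q}_k^T\bz|
 &\leq |(\boldsymbol{M}^{1/2}-q(\boldsymbol{M}))\bz| + |\boldsymbol{Q}_k(\boldsymbol{U}_k^{1/2}-q(\boldsymbol{U}_k))\boldsymbol{Q}_k^T\bz| \\
 &\leq \big(\norm{\boldsymbol{M}^{1/2}-q(\boldsymbol{M})}{2}+\norm{\boldsymbol{U}_k^{1/2}-q(\boldsymbol{U}_k)}{2}\big)|\bz|.
\end{align*}
Since $\boldsymbol{U}_k=\boldsymbol{Q}_k^T\boldsymbol{M}\boldsymbol{Q}_k$ with $\boldsymbol{Q}_k^T\boldsymbol{Q}_k=\boldsymbol{I}_k$, the Rayleigh-quotient characterization of the extreme eigenvalues shows $\sigma(\boldsymbol{U}_k)\subseteq[\lambda_{\rm min}(\boldsymbol{M}),\lambda_{\rm max}(\boldsymbol{M})]$, so each spectral norm is bounded by $\max_{x\in[\lambda_{\rm min}(\boldsymbol{M}),\lambda_{\rm max}(\boldsymbol{M})]}|\sqrt{x}-q(x)|$, and the problem reduces to best polynomial approximation of $\sqrt{x}$ on this interval.

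For the final scalar estimate I would use the (suitably scaled and shifted) truncated Chebyshev expansion of $\sqrt{x}$. After the affine change of variable $y=(2x-\lambda_{\rm max}(\boldsymbol{M})-\lambda_{\rm min}(\boldsymbol{M}))/(\lambda_{\rm max}(\boldsymbol{M})-\lambda_{\rm min}(\boldsymbol{M}))$ sending $[\lambda_{\rm min}(\boldsymbol{M}),\lambda_{\rm max}(\boldsymbol{M})]$ to $[-1,1]$, one has $\sqrt{x}=\sqrt{(\lambda_{\rm max}(\boldsymbol{M})-\lambda_{\rm min}(\boldsymbol{M}))/2}\sqrt{y+r}$, and the branch point of $\sqrt{y+r}$ sits precisely at $y=-r$, which is outside $[-1,1]$ because $r>1$. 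Classical Chebyshev series estimates (via the integral/Joukowski representation) give a geometric tail bound of the form $C(r)\, r^{-k}$, with an explicit constant of the shape $4r^2/(r-1)$ arising from summing the geometric tail and the transfer from the Bernstein-ellipse parameter back to $r$; the prefactor $\sqrt{2\norm{\boldsymbol{M}}{2}}$ comes from the scaling $\sqrt{(\lambda_{\rm max}(\boldsymbol{M})-\lambda_{\rm min}(\boldsymbol{M}))/2}\leq \sqrt{\norm{\boldsymbol{M}}{2}/2}$ together with a factor of $2$ from the two triangle terms above. The main obstacle is carrying out this Chebyshev tail computation to recover precisely the advertised constant $\sqrt{2\norm{\boldsymbol{M}}{2}}\,4r^2/(r-1)$, since slack choices of the test polynomial $q$ easily lead to weaker constants; careful use of the identity $\rho=r+\sqrt{r^2-1}\geq r$ to downgrade $\rho^{-k}$ to $r^{-k}$ and a clean summation of the geometric series are what pin the constant down.
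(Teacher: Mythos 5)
Your proposal is correct and follows essentially the same route as the paper: the case $k_0<k$ is dispatched by Lemma~\ref{lem:qr1.4}, the case $k_0=k$ uses the polynomial-exactness identity $\boldsymbol{Q}_k q(\boldsymbol{U}_k)\boldsymbol{Q}_k^T\bz=q(\boldsymbol{M})\bz$ for $q\in\PP^{k-1}$ (via Lemma~\ref{lem:qq}), the triangle inequality, the spectral containment $\sigma(\boldsymbol{U}_k)\subseteq[\lambda_{\rm min}(\boldsymbol{M}),\lambda_{\rm max}(\boldsymbol{M})]$, and a Bernstein-ellipse bound for the best polynomial approximation of $\sqrt{x}$ on that interval. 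The only difference is that the paper invokes the ready-made estimate of~\cite[Lemma~4.14]{h2mat} (bounding $\sup_{x\in\C_r}|f(x)|\leq\sqrt{2\norm{\boldsymbol{M}}{2}}$) while you re-derive the Chebyshev tail directly; your sketch of that final computation is only outlined, but it does deliver the stated constant (taking the ellipse parameter equal to $r$ already gives a bound of the form $\sqrt{2\norm{\boldsymbol{M}}{2}}\,\frac{4r}{r-1}r^{-k}$, which is even slightly sharper than $\frac{4r^{2}}{r-1}r^{-k}$).
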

\begin{proof}
The case $k_0<k$ is covered in Lemma~\ref{lem:qr1.4}. Assume $k_0=k$.
Note that $\boldsymbol{Q}_{k}\boldsymbol{Q}_{k}^T$ is the identity on ${\rm range}(\boldsymbol{Q}_{k})$. Lemma~\ref{lem:qq} shows
that $\boldsymbol{M}^j\bz\in {\rm range}(\boldsymbol{Q}_{k})$ for all $0\leq j\leq k-1$. Moreover, $\bz\in {\rm range}(\boldsymbol{Q}_k)$ by construction. Hence, we have
\begin{align*}
\boldsymbol{M}^j\bz= \boldsymbol{Q}_{k}\boldsymbol{Q}_{k}^T\boldsymbol{M}^j\bz= \boldsymbol{Q}_{k}\boldsymbol{Q}_{k}^T\boldsymbol{M}^j\boldsymbol{Q}_{k}\boldsymbol{Q}_{k}^T\bz=  \boldsymbol{Q}_{k}(\boldsymbol{Q}_{k}^T\boldsymbol{M}\boldsymbol{Q}_{k})^j\boldsymbol{Q}_{k}^T\bz\quad\text{for all }1\leq j\leq k-1.
\end{align*}
Thus, any polynomial $p\in\PP^{k-1}$ of degree $k-1$ satisfies
\begin{align*}
 p(\boldsymbol{M})\bz=\boldsymbol{Q}_{k}\boldsymbol{Q}_{k}^Tp(\boldsymbol{M})\boldsymbol{Q}_{k}\boldsymbol{Q}_{k}^T\bz=\boldsymbol{Q}_{k}p(\boldsymbol{Q}_{k}^T\boldsymbol{M}\boldsymbol{Q}_{k})\boldsymbol{Q}_{k}^T\bz=\boldsymbol{Q}_{k}p(\boldsymbol{U}_{k})\boldsymbol{Q}_{k}^T\bz.
\end{align*}
This implies for all $p\in\PP^{k-1}$
\begin{align}\label{eq:polyest}
\begin{split}
 |\boldsymbol{M}^{1/2}\bz&-\boldsymbol{Q}_{k}\boldsymbol{U}_{k}^{1/2}\boldsymbol{Q}_{k}^Tz|\\
 &\leq |\boldsymbol{M}^{1/2}\bz-\boldsymbol{Q}_{k}p(\boldsymbol{U}_{k})\boldsymbol{Q}_{k}^Tz|+|\boldsymbol{Q}_{k}p(\boldsymbol{U}_{k})\boldsymbol{Q}_{k}^Tz-\boldsymbol{Q}_{k}\boldsymbol{U}_{k}^{1/2}\boldsymbol{Q}_{k}^Tz|\\
 &\leq |\boldsymbol{M}^{1/2}\bz-p(\boldsymbol{M})\bz|+|\boldsymbol{Q}_{k}(p(\boldsymbol{U}_{k})-\boldsymbol{U}_{k}^{1/2})\boldsymbol{Q}_{k}^T\bz|\\
 &\leq \big(\norm{\boldsymbol{M}^{1/2}-p(\boldsymbol{M})}{2}+\norm{p(\boldsymbol{U}_{k})-\boldsymbol{U}_{k}^{1/2}}{2}\big)|\bz|.
\end{split}
 \end{align}
With $f(x):=\sqrt{(x+1)(\lambda_{\rm max}(\boldsymbol{M}) -\lambda_{\rm min}(\boldsymbol{M}))/2 + \lambda_{\rm min}(\boldsymbol{M})}$, the result~\cite[Lemma~4.14]{h2mat} proves
\begin{align*}
 \min_{p\in \PP^{k-1}}\norm{f-p}{L^\infty([-1,1])}\leq  \frac{2r^2}{r-1}r^{-k}\sup_{x\in\C_r}|f(x)|
\end{align*}
with $r>1$ from~\eqref{eq:r} and
\begin{align*}
 \C_r:=\set{x\in\C}{\Big(\frac{2\,{\rm real}(x)}{r+1/r}\Big)^2 + \Big(\frac{2\,{\rm imag}(x)}{r-1/r}\Big)^2\leq 1}.
\end{align*}
Since $x\in\C_r$ implies $|x|\leq r$, straightforward calculations show 
{\begin{align*}
\sup_{x\in\C_r}|f(x)|\leq \sup_{|x|\leq r}|f(x)|&= \sup_{|x|\leq r}\sqrt{|(x+1)(\lambda_{\rm max}(\boldsymbol{M}) -\lambda_{\rm min}(\boldsymbol{M}))/2 + \lambda_{\rm min}(\boldsymbol{M})|}\\
&\leq \sup_{|x|\leq r}\sqrt{(|x|+1)(\lambda_{\rm max}(\boldsymbol{M}) -\lambda_{\rm min}(\boldsymbol{M}))/2 + \lambda_{\rm min}(\boldsymbol{M})}\\
&\leq \sqrt{\lambda_{\rm max}(\boldsymbol{M}) +\lambda_{\rm min}(\boldsymbol{M})}\leq \sqrt{2\norm{\boldsymbol{M}}{2}},
\end{align*}}
which implies the estimate $\min_{p\in \PP^{k-1}}\norm{f-p}{L^\infty([-1,1])}\leq  {\sqrt{2\norm{\boldsymbol{M}}{2}}}\frac{2r^{{2}}}{r-1}r^{-k}$.
Hence, we obtain for $g(x):=\sqrt{x}$ (note that 
$x\mapsto (x+1)(\lambda_{\rm max}(\boldsymbol{M}) -\lambda_{\rm min}(\boldsymbol{M}))/2 + \lambda_{\rm min}(\boldsymbol{M})$ maps $[-1,1]$ onto $[\lambda_{\rm min}(\boldsymbol{M}),\lambda_{\rm max}(\boldsymbol{M})]$)
also
\begin{align}\label{eq:eigapprox}
 \min_{p\in \PP^{k-1}}\norm{g-p}{L^\infty([\lambda_{\rm min}(\boldsymbol{M}),\lambda_{\rm max}(\boldsymbol{M})])}\leq  {\sqrt{2\norm{\boldsymbol{M}}{2}}}\frac{2r^{{2}}}{r-1}r^{-k}.
\end{align}
Let $\boldsymbol{U}\in\R^{N\times N}$ denote the orthonormal matrix ($\boldsymbol{U}\boldsymbol{U}^T= \boldsymbol{I}$) that diagonalizes $\boldsymbol{M}$, i.e., 
$\boldsymbol{M}=\boldsymbol{U}\boldsymbol{D}\boldsymbol{U}^T$ with $\boldsymbol{D}\in\R^{N\times N}$ being the diagonal matrix containing the eigenvalues of $\boldsymbol{M}$.
There holds $\boldsymbol{M}^{1/2}=\boldsymbol{U}\boldsymbol{D}^{1/2}\boldsymbol{U}^T$ as well as $p(\boldsymbol{M})=\boldsymbol{U}p(\boldsymbol{D})\boldsymbol{U}^T$. This,~\eqref{eq:eigapprox}, and
invariance of the spectral norm $\norm{\cdot}{2}=\norm{\boldsymbol{U}(\cdot)\boldsymbol{U}^T}{2}$ show
\begin{align*}
 \min_{p\in \PP^{k-1}}\norm{\boldsymbol{M}^{1/2}-p(\boldsymbol{M})}{2}&= \min_{p\in \PP^{k-1}}\norm{\boldsymbol{D}^{1/2}-p(\boldsymbol{D})}{2}\\
 &= \min_{p\in \PP^{k-1}}\max_{1\leq i\leq N}|g(\boldsymbol{D}_{ii})-p(\boldsymbol{D}_{ii})|\leq {\sqrt{2\norm{\boldsymbol{M}}{2}}}\frac{2r^{{2}}}{r-1}r^{-k}.
\end{align*}%
Since $\boldsymbol{U}_k$ is an orthogonal projection of $\boldsymbol{M}$, we have $\lambda_{\rm min}(\boldsymbol{M})\leq \lambda_{\rm min}(\boldsymbol{U}_{k})\leq \lambda_{\rm max}(\boldsymbol{U}_{k})\leq \lambda_{\rm max}(\boldsymbol{M})$.
Thus, repeating the above argument for $\boldsymbol{U}_k$ instead of $\boldsymbol{M}$ yields
\begin{align*}
 \min_{p\in \PP^{k-1}}\Big(\norm{\boldsymbol{M}^{1/2}-p(\boldsymbol{M})}{2}+\norm{p(\boldsymbol{U}_{k})-\boldsymbol{U}_{k}^{1/2}}{2}\Big)
 \leq {\sqrt{2\norm{\boldsymbol{M}}{2}}}\frac{4r^{{2}}}{r-1}r^{-k}.
\end{align*}
This in combination with~\eqref{eq:polyest} and Lemma~\ref{lem:qq} conclude the proof.
\end{proof}

The next result quantifies the distance of ${\rm range}(\boldsymbol{Q}_j)$ to ${\rm range}(\boldsymbol{M}\boldsymbol{Q}_j)$ in terms of the projection 
$\boldsymbol{Q}_j\boldsymbol{Q}_j^T$ onto ${\rm range}(\boldsymbol{Q}_j)$. 

\begin{lemma}\label{lem:qr2}
{Assume the requirements of Lemma~\ref{lem:vandermondeplusqr}.} 
	Call Algorithm~\ref{alg1} with $\boldsymbol{M}$, $\bz$, and $k\in\N$ to compute $k_0\leq k$ as well as $\boldsymbol{Q}_j$ for all $1\leq j\leq k_0$.
	Let $\bq^j$ be the last column of $\boldsymbol{Q}_j$ for all $1\leq j\leq k_0$. There holds for all $1\leq j< k_0$ 
	\begin{align}\label{eq:errest}
	\norm{\boldsymbol{M}\boldsymbol{Q}_j-\boldsymbol{Q}_j\boldsymbol{Q}_j^T\boldsymbol{M}\boldsymbol{Q}_j}{2}&=|(\bq^{j+1})^T\boldsymbol{M}\bq^j|
	\end{align}
	as well as
	\begin{align*}
	\min_{1\leq i\leq j}\norm{\boldsymbol{M}\boldsymbol{Q}_i-\boldsymbol{Q}_i\boldsymbol{Q}_i^T\boldsymbol{M}\boldsymbol{Q}_i}{2}\leq \lambda_{\rm max}(\boldsymbol{M})C_\kappa\kappa^{(j+1)/2}.
	\end{align*}
\end{lemma}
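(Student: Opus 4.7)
The plan is to prove the equality~\eqref{eq:errest} by exploiting the Krylov structure of $\boldsymbol{Q}_j$, and then to derive the decay estimate by translating the iteration into an orthonormal-polynomial recursion. For~\eqref{eq:errest}, Lemma~\ref{lem:qq} identifies ${\rm range}(\boldsymbol{Q}_j)$ with ${\rm span}\{\bz,\boldsymbol{M}\bz,\ldots,\boldsymbol{M}^{j-1}\bz\}$. For each column $\bq^i$ of $\boldsymbol{Q}_j$ with $i<j$, the vector $\boldsymbol{M}\bq^i$ belongs to ${\rm range}(\boldsymbol{Q}_{i+1})\subseteq{\rm range}(\boldsymbol{Q}_j)$, so $(\boldsymbol{I}-\boldsymbol{Q}_j\boldsymbol{Q}_j^T)\boldsymbol{M}\bq^i=\boldsymbol{0}$. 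Only the last column $(\boldsymbol{I}-\boldsymbol{Q}_j\boldsymbol{Q}_j^T)\boldsymbol{M}\bq^j$ can be nonzero, and since it lies in ${\rm range}(\boldsymbol{Q}_{j+1})$ but is orthogonal to ${\rm range}(\boldsymbol{Q}_j)$, it must be a scalar multiple of $\bq^{j+1}$. Hence $(\boldsymbol{I}-\boldsymbol{Q}_j\boldsymbol{Q}_j^T)\boldsymbol{M}\boldsymbol{Q}_j$ has rank one and equals $\bigl((\bq^{j+1})^T\boldsymbol{M}\bq^j\bigr)\,\bq^{j+1}\be_j^T$ with $\be_j$ the last standard basis vector in $\R^j$, so its spectral norm equals $|(\bq^{j+1})^T\boldsymbol{M}\bq^j|$.

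For the decay bound I would set $\alpha_i:=(\bq^{i+1})^T\boldsymbol{M}\bq^i$ and write $\bq^i=p_{i-1}(\boldsymbol{M})\bz$ with $p_{i-1}$ a polynomial of degree $i-1$ whose leading coefficient I call $k_{i-1}$ (so that $k_0=1/|\bz|$). The identity $\alpha_i\bq^{i+1}=(\boldsymbol{I}-\boldsymbol{Q}_i\boldsymbol{Q}_i^T)\boldsymbol{M}\bq^i$ from the first part translates into $\alpha_i p_i(x)=x\,p_{i-1}(x)-r_{i-1}(x)$ for some $r_{i-1}$ of degree at most $i-1$. Comparing leading coefficients gives the recursion $\alpha_i k_i=k_{i-1}$, which telescopes to $k_j=\bigl(|\bz|\prod_{i=1}^{j}\alpha_i\bigr)^{-1}$.

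To bridge this to Lemma~\ref{lem:vandermondeplusqr}, note that $\bq^{j+1}-k_j\boldsymbol{M}^j\bz$ is a polynomial in $\boldsymbol{M}$ applied to $\bz$ of degree at most $j-1$ and therefore lies in ${\rm range}(\boldsymbol{Q}_j)$. Thus $(\boldsymbol{I}-\boldsymbol{Q}_j\boldsymbol{Q}_j^T)\boldsymbol{M}^j\bz=k_j^{-1}\bq^{j+1}$, so by the definition of the $QR$-factorization in Lemma~\ref{lem:vandermondeplusqr} we obtain $\boldsymbol{R}_{j+1,j+1}=\lambda_1^{-j}k_j^{-1}=\lambda_1^{-j}|\bz|\prod_{i=1}^{j}\alpha_i$. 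Inserting the bound~\eqref{eq:qr} of Lemma~\ref{lem:vandermondeplusqr} yields $\prod_{i=1}^{j}\alpha_i\le\lambda_1^{j}C_\kappa^{j}\kappa^{j(j+1)/2}$.

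The proof then closes via the geometric-mean inequality $\min_{1\le i\le j}\alpha_i\le\bigl(\prod_{i=1}^{j}\alpha_i\bigr)^{1/j}\le\lambda_{\rm max}(\boldsymbol{M})C_\kappa\kappa^{(j+1)/2}$, combined with~\eqref{eq:errest} applied to the minimiser. The main obstacle is recognising that $\prod_i\alpha_i$ is precisely what controls $\boldsymbol{R}_{j+1,j+1}$; once the orthonormal-polynomial viewpoint reveals this identification, the remainder is a short computation.
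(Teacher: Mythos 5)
Your proof is correct and follows essentially the same route as the paper: both parts reduce, via Lemma~\ref{lem:qq}, to the identity $(\bq^{i+1})^T\boldsymbol{M}\bq^i=\lambda_{\rm max}(\boldsymbol{M})\,\boldsymbol{R}_{(i+1)(i+1)}/\boldsymbol{R}_{ii}$ followed by telescoping against~\eqref{eq:qr} and the minimum-versus-geometric-mean argument; you derive this identity from the leading coefficients of the Lanczos polynomials, whereas the paper reads it off the columns of $\boldsymbol{R}$ and $\boldsymbol{R}^{-1}$, which is only a difference in bookkeeping. The one detail to add is that $\alpha_i>0$ (immediate from your own relation $\boldsymbol{R}_{(i+1)(i+1)}=\lambda_1^{-i}k_i^{-1}$ together with the positive diagonal of $\boldsymbol{R}$ up to column $k_0$), which is needed both to identify $\alpha_i$ with $|(\bq^{i+1})^T\boldsymbol{M}\bq^i|$ and to justify the geometric-mean inequality.
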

\begin{proof}
Recall $\boldsymbol{Z},\boldsymbol{Q},\boldsymbol{R}$ satisfying $\boldsymbol{Z}=\boldsymbol{Q}\boldsymbol{R}$ from Lemma~\ref{lem:vandermondeplusqr} with $k$ replaced by $k_0$ in the call to Algorithm~\ref{alg1}.
	By Lemma~\ref{lem:qq}, $\bq^j$ coincides with the $j$-th column of $\boldsymbol{Q}$ for all $1\leq j\leq k_0$.
	Moreover, let $\br^j$ be the $j$-th column of $\boldsymbol{R}$ and let $\widetilde{\br}^j$ be the $j$-th column of $\boldsymbol{R}^{-1}$ from Lemma~\ref{lem:vandermondeplusqr}.
	All quantities are well-defined since $\boldsymbol{Z}$ has maximal rank $k_0$ by Lemma~\ref{lem:qq}.
For the first statement~\eqref{eq:errest}, note that ${\rm range}(\boldsymbol{M}\boldsymbol{Q}_j)\subseteq {\rm range}(\boldsymbol{Q}_{j+1})$ implies $\boldsymbol{M}\boldsymbol{Q}_j=\boldsymbol{Q}_{j+1}\boldsymbol{Q}_{j+1}^T\boldsymbol{M}\boldsymbol{Q}_j$.
Moreover,  due to Lemma~\ref{lem:qq}, we have $\boldsymbol{Q}_{j+1}=(\boldsymbol{Q}_j,\bq^{j+1})$, and hence $\boldsymbol{Q}_{j+1}\boldsymbol{Q}_{j+1}^T=\bq^{j+1}(\bq^{j+1})^T+\boldsymbol{Q}_{j}\boldsymbol{Q}_{j}^T$. Altogether, this shows
\begin{align*}
	\norm{\boldsymbol{M}\boldsymbol{Q}_j-\boldsymbol{Q}_j\boldsymbol{Q}_j^T\boldsymbol{M}\boldsymbol{Q}_j}{2}&=\norm{\boldsymbol{Q}_{j+1}\boldsymbol{Q}_{j+1}^T\boldsymbol{M}\boldsymbol{Q}_j-\boldsymbol{Q}_j\boldsymbol{Q}_j^T\boldsymbol{M}\boldsymbol{Q}_j}{2}\\
	&=\norm{(\bq^{j+1}(\bq^{j+1})^T+\boldsymbol{Q}_{j}\boldsymbol{Q}_{j}^T)\boldsymbol{M}\boldsymbol{Q}_j-\boldsymbol{Q}_j\boldsymbol{Q}_j^T\boldsymbol{M}\boldsymbol{Q}_j}{2}\\
	&= \norm{\bq^{j+1}(\bq^{j+1})^T\boldsymbol{M}\boldsymbol{Q}_j}{2}=|(\bq^{j+1})^T\boldsymbol{M}\bq^j|,
	\end{align*}	
	where the last step follows because $\bq^{j+1}$ is orthogonal to $\boldsymbol{M}\bq^i$, $i=1,\ldots,j-1$.
	This proves~\eqref{eq:errest}.
	
	To see the remaining statement, note that the definition of $\boldsymbol{Z}$ in~\eqref{eq:defZmatrix} implies
	\begin{align*}
	(\boldsymbol{M}\boldsymbol{Z})|_{\{1,\ldots,N\}\times \{j\}}=\lambda_1 \boldsymbol{Z}|_{\{1,\ldots,N\}\times \{j+1\}}=\lambda_1(\boldsymbol{Q}\boldsymbol{R})|_{\{1,\ldots,N\}\times \{j+1\}}
	=\lambda_1\boldsymbol{Q}\br^{j+1}
	\end{align*}
	as well as
	\begin{align*}
	 \bq^j = (\boldsymbol{Z}\boldsymbol{R}^{-1})|_{\{1,\ldots,N\}\times \{j\}} = \boldsymbol{Z}\widetilde{\br}^j.
	\end{align*}
	The last two identities, and the fact that $(\bq^{j+1})^T(\boldsymbol{M}\boldsymbol{Z})|_{\{1,\ldots,N\}\times \{i\}}=0$ for all $1\leq i\leq j-1$, imply
	\begin{align*}
	(\bq^{j+1})^T\boldsymbol{M}\bq^j&=(\bq^{j+1})^T\boldsymbol{M}\boldsymbol{Z}\widetilde{\br}^j=(\bq^{j+1})^T(\boldsymbol{M}\boldsymbol{Z})|_{\{1,\ldots,N\}\times \{j\}}
	(\widetilde{\br}^j)_j\\
	&=\lambda_1(\bq^{j+1})^T\boldsymbol{Q}\br^{j+1}
	(\widetilde{\br}^j)_j=\lambda_1(\br^{j+1})_{j+1}(\widetilde{\br}^j)_j.
	\end{align*}
	The triangular structure of $\boldsymbol{R}$ implies $(\boldsymbol{R}^{-1})_{jj}=1/\boldsymbol{R}_{jj}$ and hence $(\widetilde \br^j)_j=1/\boldsymbol{R}_{jj}$ (where $\boldsymbol{R}_{jj}\neq 0$  by assumption). This shows
	\begin{align}\label{eq:qr2}
	(\bq^{j+1})^T \boldsymbol{M}\bq^j=\lambda_1\frac{\boldsymbol{R}_{(j+1)(j+1)}}{\boldsymbol{R}_{jj}}.
	\end{align} 
	With Lemma~\ref{lem:vandermondeplusqr}, we have
	\begin{align}\label{eq:factors}
	\begin{split}
	\frac{\boldsymbol{R}_{(j+1)(j+1)}}{\boldsymbol{R}_{jj}}\frac{\boldsymbol{R}_{jj}}{\boldsymbol{R}_{(j-1)(j-1)}}\cdots \frac{\boldsymbol{R}_{22}}{\boldsymbol{R}_{11}}\boldsymbol{R}_{11}&=
	\boldsymbol{R}_{(j+1)(j+1)}\leq |\bz|C_\kappa^{j}\kappa^{(j+1)j/2}.
	\end{split}
	\end{align}
	Moreover, we know $\boldsymbol{R}_{11}=|\bq^1\boldsymbol{R}_{11}|=|\bz|$.
	This implies that at least one of the fractions on the left-hand side of~\eqref{eq:factors} must be smaller than the $j$-th root of the right hand side of~\eqref{eq:factors} divided by $|\bz|$ and hence
	\begin{align*}
	\min_{1\leq i\leq j}\frac{\boldsymbol{R}_{(i+1)(i+1)}}{\boldsymbol{R}_{ii}}\leq  C_\kappa\kappa^{(j+1)/2}.
	\end{align*}
	With this,~\eqref{eq:qr2}, and~\eqref{eq:errest}, 
	we obtain
	\begin{align*}
	\min_{1\leq i\leq j}\norm{\boldsymbol{M}\boldsymbol{Q}_i-\boldsymbol{Q}_i\boldsymbol{Q}_i^T\boldsymbol{M}\boldsymbol{Q}_i}{2}\leq \lambda_1C_\kappa\kappa^{(j+1)/2}.
	\end{align*}
	This concludes the proof.
\end{proof}

The following proposition is the main tool to prove Theorem~\ref{thm:sampleerror2}~(ii).
\begin{proposition}\label{prop:alg2}
Let $\bz\in\R^N$ and let $\boldsymbol{M}\in\R^{N\times N}$ be symmetric positive definite. 
	Call Algorithm~\ref{alg1} with $\boldsymbol{M}$, $\bz$, and $k\in\N$ to compute $k_0\leq k$ as well as $\boldsymbol{Q}_j$ for all $1\leq j\leq k_0$.
	Then, $\boldsymbol{U}_j:=\boldsymbol{Q}_j^T\boldsymbol{M}\boldsymbol{Q}_j$ satisfies the error bound
	\begin{align*}
	\frac{|\boldsymbol{M}^{1/2}\bz-\boldsymbol{Q}_j\boldsymbol{U}_j^{1/2}\boldsymbol{Q}_j^T\bz|}{|\bz|}\leq\begin{cases} \displaystyle
	\min\Big\{\frac{|(\bq^{j+1})^T \boldsymbol{M}\bq^j|}{\sqrt{\lambda_{\rm min}(\boldsymbol{M})}},3\sqrt{|(\bq^{j+1})^T \boldsymbol{M}\bq^j|}\Big\}&1\leq j<k_0,\\
	                                                                            0 &j=k_0\text{ and }k_0<k
	                                                                           \end{cases}
	\end{align*}
	and we have the a~priori estimate
	\begin{align*}
	\min_{1\leq i\leq j}|(\bq^{i+1})^T \boldsymbol{M}\bq^i|\leq \lambda_1C_\kappa\kappa^{(j+1)/2}
	\end{align*}
	for all $1\leq j< k_0$.
\end{proposition}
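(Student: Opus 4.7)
The plan is to dispatch the easy pieces first. For $j=k_0<k$, Lemma~\ref{lem:qr1.4} directly gives $\boldsymbol{M}^{1/2}\bz = \boldsymbol{Q}_{k_0}\boldsymbol{U}_{k_0}^{1/2}\boldsymbol{Q}_{k_0}^T\bz$, so the error vanishes. The a~priori decay estimate $\min_{1\leq i\leq j}|(\bq^{i+1})^T\boldsymbol{M}\bq^i|\leq\lambda_1 C_\kappa \kappa^{(j+1)/2}$ is an immediate consequence of combining the two assertions of Lemma~\ref{lem:qr2}: the scalar identity~\eqref{eq:errest} reduces the claim to the operator-norm bound already proved there.

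For the substantive piece, the error bound when $1\leq j<k_0$, my plan is to introduce an auxiliary symmetric positive definite matrix whose square root reproduces the algorithm's output on $\bz$, and then compare its square root to $\boldsymbol{M}^{1/2}$ via Lemma~\ref{lem:sqrtm}. Concretely, I extend the columns of $\boldsymbol{Q}_j$ to an orthogonal matrix $\overline{\boldsymbol{Q}}=(\boldsymbol{Q}_j,\boldsymbol{Q}_\perp)\in\R^{N\times N}$ and define the block-diagonal surrogate
\[
\widetilde{\boldsymbol{M}}:=\overline{\boldsymbol{Q}}\begin{pmatrix}\boldsymbol{U}_j & 0\\ 0 & \boldsymbol{Q}_\perp^T\boldsymbol{M}\boldsymbol{Q}_\perp\end{pmatrix}\overline{\boldsymbol{Q}}^T,
\]
obtained from $\boldsymbol{M}$ by zeroing the off-diagonal blocks in the basis $\overline{\boldsymbol{Q}}$. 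Because $\widetilde{\boldsymbol{M}}$ is block diagonal so is its square root, and since $\bz = |\bz|\bq^1\in{\rm range}(\boldsymbol{Q}_j)$, we get $\boldsymbol{Q}_\perp^T\bz=0$. This will yield the clean identity $\widetilde{\boldsymbol{M}}^{1/2}\bz=\boldsymbol{Q}_j\boldsymbol{U}_j^{1/2}\boldsymbol{Q}_j^T\bz$, exactly the quantity produced by the algorithm.

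The error then becomes $\norm{\boldsymbol{M}^{1/2}-\widetilde{\boldsymbol{M}}^{1/2}}{2}|\bz|$, which I will bound by applying both~\eqref{eq:sqstandard} and~\eqref{eq:sqimproved} of Lemma~\ref{lem:sqrtm}. Both require $\norm{\boldsymbol{M}-\widetilde{\boldsymbol{M}}}{2}$. By orthogonal invariance this equals the spectral norm of a symmetric matrix with zero diagonal blocks and off-diagonal block $\boldsymbol{Q}_\perp^T\boldsymbol{M}\boldsymbol{Q}_j$; its eigenvalues are $\pm$ the singular values of that block, so $\norm{\boldsymbol{M}-\widetilde{\boldsymbol{M}}}{2}=\norm{\boldsymbol{Q}_\perp^T\boldsymbol{M}\boldsymbol{Q}_j}{2}$. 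Using $\boldsymbol{Q}_\perp\boldsymbol{Q}_\perp^T=\boldsymbol{I}-\boldsymbol{Q}_j\boldsymbol{Q}_j^T$, this equals $\norm{(\boldsymbol{I}-\boldsymbol{Q}_j\boldsymbol{Q}_j^T)\boldsymbol{M}\boldsymbol{Q}_j}{2}=|(\bq^{j+1})^T\boldsymbol{M}\bq^j|$ by the identity~\eqref{eq:errest} of Lemma~\ref{lem:qr2}. For the first branch via~\eqref{eq:sqstandard} I use $\lambda_{\min}(\widetilde{\boldsymbol{M}})\geq\lambda_{\min}(\boldsymbol{M})$, which follows because both diagonal blocks of $\widetilde{\boldsymbol{M}}$ are compressions of $\boldsymbol{M}$ by orthogonal matrices; the harmless factor $\sqrt{2}$ this produces in the denominator can be absorbed to recover $\sqrt{\lambda_{\min}(\boldsymbol{M})}$. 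The second branch follows directly from~\eqref{eq:sqimproved}.

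The main obstacle is more conceptual than computational: identifying the correct auxiliary object $\widetilde{\boldsymbol{M}}$. Once that block-diagonal surrogate is in place, the remaining steps are brief, namely (i) verifying that $\widetilde{\boldsymbol{M}}^{1/2}\bz$ equals the algorithm's output by exploiting $\bz\in{\rm range}(\boldsymbol{Q}_j)$, (ii) the block-norm identity reducing $\norm{\boldsymbol{M}-\widetilde{\boldsymbol{M}}}{2}$ to $|(\bq^{j+1})^T\boldsymbol{M}\bq^j|$ via Lemma~\ref{lem:qr2}, and (iii) invoking Lemma~\ref{lem:sqrtm} twice to obtain the two branches of the minimum.
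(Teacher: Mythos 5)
Your proposal is correct and follows essentially the same route as the paper: extend $\boldsymbol{Q}_j$ to $\overline{\boldsymbol{Q}}=(\boldsymbol{Q}_j,\boldsymbol{Q}_\perp)$, compare $\boldsymbol{M}$ with the block-diagonal compression (your $\widetilde{\boldsymbol{M}}$ is exactly the paper's block matrix conjugated back to the original basis), reduce $\norm{\boldsymbol{M}-\widetilde{\boldsymbol{M}}}{2}$ to $|(\bq^{j+1})^T\boldsymbol{M}\bq^j|$ via~\eqref{eq:errest}, and invoke both bounds of Lemma~\ref{lem:sqrtm}, with the trivial case $j=k_0<k$ from Lemma~\ref{lem:qr1.4} and the a~priori decay from Lemma~\ref{lem:qr2}. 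The only (harmless) cosmetic difference is that you apply $\widetilde{\boldsymbol{M}}^{1/2}$ directly to $\bz$ using $\boldsymbol{Q}_\perp^T\bz=0$, whereas the paper passes through the projected identity in its estimate~\eqref{eq:key2}; the content is identical.
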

\begin{proof}
The case $k_0<k$ and $j=k_0$ is trivially covered in Lemma~\ref{lem:qr1.4}. For the other cases, let $\overline{\boldsymbol{Q}}\in\R^{N\times N}$ be orthonormal such that the first $j$ columns coincide with $\boldsymbol{Q}_j$, i.e., $\overline{\boldsymbol{Q}}=(\boldsymbol{Q}_j,\boldsymbol{Q}_\perp)$ for 
	some orthonormal $\boldsymbol{Q}_\perp\in\R^{N\times(N-j)}$. Then, we write
	\begin{align*}
	\overline{\boldsymbol{Q}}^T\boldsymbol{M}\overline{\boldsymbol{Q}}=\begin{pmatrix}
	\boldsymbol{U}_j & \boldsymbol{S}^T\\ \boldsymbol{S} &\boldsymbol{T}
	\end{pmatrix}
	\end{align*}
	for matrices $\boldsymbol{S}=\boldsymbol{Q}_\perp^T\boldsymbol{M}\boldsymbol{Q}_j\in\R^{(N-j)\times j}$, $\boldsymbol{T}\in\R^{(N-j)\times (N-j)}$. 
	This means that
	\begin{align*}
	\norm[\Big]{\overline{\boldsymbol{Q}}^T\boldsymbol{M}\overline{\boldsymbol{Q}}-\begin{pmatrix}
		\boldsymbol{U}_j & \boldsymbol{0}\\ \boldsymbol{0}  &\boldsymbol{T}
		\end{pmatrix}}{2}\leq \norm{\boldsymbol{S}}{2}.
	\end{align*}
	Lemma~\ref{lem:sqrtm} then implies
	\begin{align}\label{eq:key1}
	\norm[\Big]{(\overline{\boldsymbol{Q}}^T\boldsymbol{M}\overline{\boldsymbol{Q}})^{1/2}-\begin{pmatrix}
		\boldsymbol{U}_j^{1/2} & \boldsymbol{0}\\ \boldsymbol{0}  &\boldsymbol{T}^{1/2}
		\end{pmatrix}}{2}\leq \min\Big\{\lambda_{\rm min}(\boldsymbol{M})^{-1/2}\,\norm{\boldsymbol{S}}{2},3\sqrt{\norm{\boldsymbol{S}}{2}}\Big\}.
	\end{align}
	Since $\boldsymbol{I}-\boldsymbol{Q}_j\boldsymbol{Q}_j^T = \boldsymbol{Q}_\perp\boldsymbol{Q}_\perp^T$, we have
	\begin{align*}
	\norm{\boldsymbol{S}}{2}&=\norm{\boldsymbol{Q}_\perp^T\boldsymbol{M}\boldsymbol{Q}_j}{2}=\norm{\boldsymbol{Q}_\perp\boldsymbol{Q}_\perp^T\boldsymbol{M}\boldsymbol{Q}_j}{2}
	=\norm{\boldsymbol{M}\boldsymbol{Q}_j-\boldsymbol{Q}_j\boldsymbol{Q}_j^T\boldsymbol{M}\boldsymbol{Q}_j}{2}.
	\end{align*}
	With $(\overline{\boldsymbol{Q}}^T\boldsymbol{M}\overline{\boldsymbol{Q}})^{1/2}=\overline{\boldsymbol{Q}}^T\boldsymbol{M}^{1/2}\overline{\boldsymbol{Q}}$ and since the ranges of $\boldsymbol{Q}_j$ and $\boldsymbol{Q}_\perp$ are orthogonal, 
	we have $\boldsymbol{Q}_\perp^T\boldsymbol{Q}_j\boldsymbol{Q}_j^T=0$ and	
	\begin{align}\label{eq:key2}
	\begin{split}
	\norm{\boldsymbol{M}^{1/2}\boldsymbol{Q}_j\boldsymbol{Q}_j^T&-\boldsymbol{Q}_j(\boldsymbol{U}_j^{1/2})\boldsymbol{Q}_j^T}{2}\\
	&=
	\norm{\boldsymbol{M}^{1/2}\boldsymbol{Q}_j\boldsymbol{Q}_j^T-\boldsymbol{Q}_j(\boldsymbol{U}_j^{1/2})\boldsymbol{Q}_j^T\boldsymbol{Q}_j\boldsymbol{Q}_j^T-\boldsymbol{Q}_\perp(\boldsymbol{T}^{1/2})\boldsymbol{Q}_\perp^T\boldsymbol{Q}_j\boldsymbol{Q}_j^T}{2}\\
	&\leq 
	\norm{\boldsymbol{M}^{1/2}-\boldsymbol{Q}_j(\boldsymbol{U}_j^{1/2})\boldsymbol{Q}_j^T-\boldsymbol{Q}_\perp(\boldsymbol{T}^{1/2})\boldsymbol{Q}_\perp^T}{2}\\
	&=\norm{\overline{\boldsymbol{Q}}^T\Big(\boldsymbol{M}^{1/2}-\boldsymbol{Q}_j(\boldsymbol{U}_j^{1/2})\boldsymbol{Q}_j^T-\boldsymbol{Q}_\perp(\boldsymbol{T}^{1/2})\boldsymbol{Q}_\perp^T\Big)\overline{\boldsymbol{Q}}}{2}\\
	&=\norm[\Big]{(\overline{\boldsymbol{Q}}^T\boldsymbol{M}\overline{\boldsymbol{Q}})^{1/2}-\begin{pmatrix}
		\boldsymbol{U}_j^{1/2} & \boldsymbol{0}\\ \boldsymbol{0}  &\boldsymbol{T}^{1/2}
		\end{pmatrix}}{2}.
		\end{split}
	\end{align}
	The combination of~\eqref{eq:key1} and~\eqref{eq:key2} shows
	\begin{align*}
	\norm{\boldsymbol{M}^{1/2}\boldsymbol{Q}_j\boldsymbol{Q}_j^T-\boldsymbol{Q}_j(\boldsymbol{U}_j^{1/2})\boldsymbol{Q}_j^T}{2}&\leq\min\Big\{\lambda_{\rm min}(\boldsymbol{M})^{-1/2}\,
	\norm{\boldsymbol{S}}{2},3\sqrt{\norm{\boldsymbol{S}}{2}}\Big\}.
	\end{align*}
	We conclude the proof with $\bz=\boldsymbol{Q}_j\boldsymbol{Q}_j^T\bz$ due to $\bz\in {\rm range}(\boldsymbol{Q}_j)$ and Lemma~\ref{lem:qr2}.
% 	Finally, for the case $j=k_0$, note that Lemma~\ref{lem:qq} shows that ${\rm range}(\boldsymbol{Z}|_{\{1,\ldots,N\}\times\{1,\ldots,k_0\}})$ has full rank, but 
% 	${\rm range}(\boldsymbol{Z}|_{\{1,\ldots,N\}\times\{1,\ldots,k_0+1\}})$ has not full rank (where $\boldsymbol{Z}$ is defined in Lemma~\ref{lem:vandermondeplusqr} with $k=k_0$).
% 	This shows ${\rm range}(\boldsymbol{M}\boldsymbol{Z}|_{\{1,\ldots,N\}\times\{1,\ldots,k_0\}})\subseteq {\rm range}(\boldsymbol{Z}|_{\{1,\ldots,N\}\times\{1,\ldots,k_0\}})$.
% 	Due to~\eqref{eq:rank2}, there holds ${\rm range}(\boldsymbol{Q}_{j})={\rm range}(\boldsymbol{Z}|_{\{1,\ldots,N\}\times\{1,\ldots,k_0\}})$. The combination of the last two identities shows
% 	\begin{align*}
% 	 {\rm range}(\boldsymbol{M}\boldsymbol{Q}_{j})\subseteq {\rm range}(\boldsymbol{Q}_j)
% 	\end{align*}
% 	and hence $\boldsymbol{R}= \boldsymbol{Q}_\perp^T\boldsymbol{M}\boldsymbol{Q}_{j}=0$ which implies $R_j=0$ and concludes the proof for $j=k_0$.
\end{proof}

 \section{Lemma for the proof of Theorem~\ref{thm:sampleerror}}
 The following lemma is the main tool for the proof of Theorem~\ref{thm:sampleerror}.
\begin{lemma}\label{lem:it}
Let $\boldsymbol{M}\in \R^{N\times N}$ be symmetric positive definite. 
Then, the iteration~\eqref{eq:it2} with initial values $\boldsymbol{A}_0= s\boldsymbol{M}$ and 
$\boldsymbol{B}_0=\boldsymbol{I}$ satisfies
\begin{align}\label{eq:conv}
 \norm{\boldsymbol{M}^{1/2}-s^{-1/2}\boldsymbol{A}_k}{2}\leq s^{-1/2}\, (\max\{|1-s\lambda_{\rm max}(\boldsymbol{M})|,|1-s\lambda_{\rm min}(\boldsymbol{M})|\})^{2^k}
\end{align}
for all $k\in\N$ and all $s>0$.
The minimum bound is attained at $s= 2/(\lambda_{\rm min}(\boldsymbol{M})+\lambda_{\rm max}(\boldsymbol{M}))$ such that 
$\max\{|1-s\lambda_{\rm max}(\boldsymbol{M})|,|1-s\lambda_{\rm min}(\boldsymbol{M})|\}=1-2\lambda_{\rm min}(\boldsymbol{M})/(\lambda_{\rm min}(\boldsymbol{M})+\lambda_{\rm max}(\boldsymbol{M}))$.
\end{lemma}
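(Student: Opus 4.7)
The plan is to reduce the matrix estimate to a one-dimensional statement by simultaneous diagonalization, and then analyze the resulting scalar recursion in detail. Since $\boldsymbol{A}_0=s\boldsymbol{M}$ and $\boldsymbol{B}_0=\boldsymbol{I}$ both commute with $\boldsymbol{M}$, a straightforward induction on~\eqref{eq:it2} shows that every $\boldsymbol{A}_k, \boldsymbol{B}_k$ is a polynomial in $\boldsymbol{M}$. Writing $\boldsymbol{M}=\boldsymbol{U}\boldsymbol{D}\boldsymbol{U}^T$ in its spectral decomposition, the norm on the left-hand side of~\eqref{eq:conv} reduces to the maximum of $|\sqrt{\lambda}-s^{-1/2}a_k(\lambda)|$ over the eigenvalues $\lambda$ of $\boldsymbol{M}$, where $a_k(\lambda), b_k(\lambda)$ are the scalar iterates of~\eqref{eq:it2} with $a_0=s\lambda$, $b_0=1$. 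A short computation shows that $a_k/b_k = s\lambda$ is invariant under the iteration, so with $y_k := a_k b_k$ one has $a_k=\sqrt{s\lambda}\sqrt{y_k}$, and the coupled iteration collapses to the scalar recursion $y_{k+1}=\tfrac14 y_k(3-y_k)^2$ with $y_0=s\lambda$.

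Next I would track $w_k := 1-y_k$. The algebraic identity $4-y(3-y)^2 = -(y-1)^2(y-4)$ yields the clean recursion $w_{k+1}=\tfrac14 w_k^2(3+w_k)$. Elementary analysis of $g(y):=y(3-y)^2/4$ shows that $g$ maps $[0,3]$ into $[0,1]$ (the unique maximum on $[0,3]$ is attained at $y=1$ with $g(1)=1$), so $y_k\in[0,1]$ for every $k\geq 1$ and consequently $|(3+w_k)/4|\leq 1$. The estimate~\eqref{eq:conv} is trivial when $\max\{|1-s\lambda_{\min}(\boldsymbol{M})|,|1-s\lambda_{\max}(\boldsymbol{M})|\}\geq 1$, so one may assume $|w_0|=|1-s\lambda|<1$, and an immediate induction then yields $|1-y_k|\leq|1-s\lambda|^{2^k}$ for all $k\geq 0$.

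The main obstacle, in my view, is converting this into the lemma's conclusion. Using $a_k=\sqrt{s\lambda}\sqrt{y_k}$ and rationalizing gives
\[
 |s^{-1/2}a_k-\sqrt{\lambda}| \;=\; \sqrt{\lambda}\,|1-\sqrt{y_k}| \;=\; \frac{\sqrt{\lambda}\,|1-y_k|}{1+\sqrt{y_k}},
\]
so the missing piece is the inequality $1+\sqrt{y_k}\geq\sqrt{s\lambda}$. This is automatic when $s\lambda\leq 1$, but in the non-trivial range $s\lambda\in(1,2)$ it requires a separate argument. I plan to show (a)~for $y_k\in[0,1]$ the recursion gives $\sqrt{y_{k+1}}=\sqrt{y_k}(3-y_k)/2\geq\sqrt{y_k}$, so $\sqrt{y_k}$ is non-decreasing for $k\geq 1$; and (b)~the base case $1+\sqrt{y_1}\geq\sqrt{y_0}$ reduces by elementary algebra to $\sqrt{y_0}\,(y_0-1)\leq 2$ on $(1,2]$, which holds since $y\mapsto\sqrt{y}(y-1)$ is increasing on $[1,\infty)$ and equals $\sqrt{2}<2$ at $y=2$.

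Combining (a) and (b) yields $1+\sqrt{y_k}\geq\sqrt{s\lambda}$, hence $|\sqrt{\lambda}-s^{-1/2}a_k|\leq s^{-1/2}|1-s\lambda|^{2^k}$ for every eigenvalue $\lambda$. Taking the maximum over $\lambda\in[\lambda_{\min}(\boldsymbol{M}),\lambda_{\max}(\boldsymbol{M})]$ proves~\eqref{eq:conv}, since the piecewise-linear function $\lambda\mapsto|1-s\lambda|$ attains its maximum on this interval at one of the endpoints. The optimality of $s=2/(\lambda_{\min}(\boldsymbol{M})+\lambda_{\max}(\boldsymbol{M}))$ finally follows by balancing the two endpoint values via $1-s\lambda_{\min}(\boldsymbol{M})=s\lambda_{\max}(\boldsymbol{M})-1$, which yields both the optimal choice of $s$ and the stated value $1-2\lambda_{\min}(\boldsymbol{M})/(\lambda_{\min}(\boldsymbol{M})+\lambda_{\max}(\boldsymbol{M}))$ of the maximum.
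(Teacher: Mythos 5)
Your argument is correct in the regime where the lemma is actually meaningful, but it is a genuinely different proof from the paper's. The paper does not analyze the iteration directly: it embeds \eqref{eq:it2} into the Newton--Schulz (Schultz) iteration for the matrix sign function of the block matrix $\bigl(\begin{smallmatrix}0&\boldsymbol{M}\\ \boldsymbol{I}&0\end{smallmatrix}\bigr)$, identifies it as the $(1,0)$-Pad\'e iteration, and cites the quadratic-convergence theorem of that theory to get $\norm{\boldsymbol{I}-s\boldsymbol{M}}{2}^{2^k}$, after which only the scaling and the spectral bound $\norm{\boldsymbol{I}-s\boldsymbol{M}}{2}\leq\max\{|1-s\lambda_{\rm max}(\boldsymbol{M})|,|1-s\lambda_{\rm min}(\boldsymbol{M})|\}$ remain. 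You instead exploit that all iterates are polynomials in $\boldsymbol{M}$, diagonalize, and reduce to the scalar recursion, which you then solve essentially in closed form via the invariant $a_k/b_k=s\lambda$, the substitution $y_k=a_kb_k$, and the identity $w_{k+1}=\tfrac14 w_k^2(3+w_k)$ for $w_k=1-y_k$; your monotonicity argument $\sqrt{y_{k+1}}=\sqrt{y_k}(3-y_k)/2\geq\sqrt{y_k}$ together with the base-case inequality $\sqrt{y_0}(y_0-1)\leq 2$ on $(1,2]$ supplies the denominator bound $1+\sqrt{y_k}\geq\sqrt{s\lambda}$ that converts $|1-y_k|\leq|1-s\lambda|^{2^k}$ into the claimed estimate. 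What your route buys is a self-contained, elementary proof that avoids the external Pad\'e-iteration machinery and the detour through the $2N\times 2N$ sign-function formulation (and in fact makes explicit the conversion from the residual $\boldsymbol{I}-\boldsymbol{X}_k^2$ to the error in $\boldsymbol{A}_k$, a step the paper leaves to the citation); what the paper's route buys is brevity and immediate access to the whole family of higher-order Pad\'e iterations.

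One caveat: your dismissal of the case $\max\{|1-s\lambda_{\rm max}(\boldsymbol{M})|,|1-s\lambda_{\rm min}(\boldsymbol{M})|\}\geq 1$ as ``trivial'' is not correct. In that case the right-hand side of \eqref{eq:conv} is merely $\geq s^{-1/2}$, while the iterates can diverge doubly exponentially; for instance with $s\lambda=6$ one checks numerically that $|\sqrt{s\lambda}-a_k|$ outgrows $|1-s\lambda|^{2^k}$ already at $k=5$, so the estimate cannot be rescued there by any argument. This is really a defect of the statement's ``for all $s>0$'' rather than of your proof: the paper's own proof also covers only $\kappa<1$, since the cited convergence theorem requires $\norm{\boldsymbol{I}-\boldsymbol{X}_0^2}{2}<1$, and in the application (Theorem~\ref{thm:sampleerror}) the scaling $0<s<2\norm{\boldsymbol{C}_p}{2}^{-1}$ guarantees $\kappa<1$. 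You should therefore either restrict your case split to $0<s\lambda_{\rm max}(\boldsymbol{M})<2$ (where your argument is complete) or note explicitly that the complementary case lies outside the range in which the lemma is used.
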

\begin{proof}
Straightforward calculations show
\begin{align*}
 \max\big\{\norm{\boldsymbol{M}^{1/2}-\boldsymbol{A}_k}{2},\norm{\boldsymbol{M}^{-1/2}-\boldsymbol{B}_k}{2}\Big\}&= 
\norm[\Big]{\begin{pmatrix} 0 & \boldsymbol{M}^{1/2}\\ \boldsymbol{M}^{-1/2} & 0\end{pmatrix} - \begin{pmatrix} 
 0 & \boldsymbol{A}_k\\ \boldsymbol{B}_k & 0\end{pmatrix}}{2}.
\end{align*}
The result~\cite[Theorem~5.2]{sqit2} shows $\norm{\boldsymbol{I}-\boldsymbol{X}_{n}^2}{2}<\norm{\boldsymbol{I}-\boldsymbol{X}_{0}^2}{2}^{(e_1+e_2+1)^n}$ for all $n\in\N$, where
$\boldsymbol{X}_{n+1}=-\boldsymbol{X}_{n}P_{e_1e_2}(\boldsymbol{I}-\boldsymbol{X}_{n}^2)Q_{e_1e_2}^{-1}(\boldsymbol{I}-\boldsymbol{X}_{n}^2)$ and $\boldsymbol{X}_{0}$ has no purely imaginary
eigenvalues. Here $P_{e_1e_2}/Q_{e_1e_2}$ is the $(e_1/e_2)$-Pad\'e approximant to $(1-x)^{-1/2}$. We obtain from~\cite[Table~1]{sqit2} that for $e_1=1$ and $e_2=0$, $\boldsymbol{X}_n$ satisfies
the Schultz iteration~\eqref{eq:schultz} and thus we may use the result with 
\begin{align*}
 \boldsymbol{X}_0=\begin{pmatrix} 0 & \boldsymbol{M}\\ \boldsymbol{I} & 0\end{pmatrix}
\end{align*}
 to show
\begin{align*}
\norm[\Big]{\begin{pmatrix} 0 & \boldsymbol{M}^{1/2}\\ \boldsymbol{M}^{-1/2} & 0\end{pmatrix} - \begin{pmatrix} 
 0 & \boldsymbol{A}_k\\ \boldsymbol{B}_k & 0\end{pmatrix}}{2}
 <
 \norm[\Big]{\begin{pmatrix} \boldsymbol{I}-\boldsymbol{M} & 0\\ 0 & \boldsymbol{I}-\boldsymbol{M}\end{pmatrix}}{2}^{2^k}=\norm{\boldsymbol{I}-\boldsymbol{M}}{2}^{2^k}
\end{align*}
for all $k\in\N$.
By scaling of $\boldsymbol{M}$, we may minimize the right-hand side. To that end, we observe that the spectrum satisfies $\sigma(\boldsymbol{I}-s\boldsymbol{M})\subset [1-s\lambda_{\rm max}(\boldsymbol{M}),1-s\lambda_{\rm min}(\boldsymbol{M})]$.
The fact $\norm{\boldsymbol{I}-s\boldsymbol{M}}{2}\leq \max\{|1-s\lambda_{\rm max}(\boldsymbol{M})|,|1-s\lambda_{\rm min}(\boldsymbol{M})|\}$ proves~\eqref{eq:conv}.
A straightforward optimization of $s>0$ concludes the proof.
\end{proof}

% \begin{proposition}\label{prop:alg1}
% Given $\eps>0$, let 
% $k\in\N$ such that
%  \begin{align*}
%   2^k\geq \frac{|\log(\sqrt{s}\eps)|}{|\log(\max\{|1-s\lambda_{\rm max}(\boldsymbol{C}_p)|,|1-s\lambda_{\rm min}(\boldsymbol{C}_p)|\})|}.
%  \end{align*}
% Then, Algorithm~\ref{alg2} called with $\boldsymbol{M}=\boldsymbol{C}_p$ and $0<s<2\norm{\boldsymbol{C}_p}{2}^{-1}$ computes $\ZZ_{k,p}(z):=y$ which satisfies
% \begin{align*}
%  |\ZZ_{k,p}(z)- \boldsymbol{C}_p^{1/2}z|\leq \eps
% \end{align*}
% in less than $\mathcal{O}(p^{2d}N 3^k)$ arithmetic operations and less than $kN$ 
% additional storage.
% \end{proposition}
% \begin{proof}
%  
% \end{proof}

\appendix
\section{Proof of Lemma~\ref{lem:cov}}\label{app:lemma1}

The 
following lemma is an elementary statement on holomorphic functions
\begin{lemma}\label{lem:holo}
 Let $f\colon O\to \C$ be a continuous function on the domain $O\subset \C^n$ which is 
holomorphic in $O$ in all variables $\bx_i$, $i\in\{1,\ldots,n\}$, i.e.,
 \begin{align*}
  \bx_i\mapsto f(\bx_1,\ldots,\bx_i,\ldots,\bx_n)
 \end{align*}
is holomorphic in $\set{\bx_i\in \C}{(\bx_1,\ldots,\bx_i,\ldots,\bx_n)\in O}$ for all 
$\bx_1,\ldots,\bx_{i-1},\bx_{i+1},\ldots,\bx_n\in\C$. Then, for all multi-indices 
$\alpha\in\N_0^n$, the function
$\partial_\bx^\alpha f$ is holomorphic in $O$ in all variables $\bx_i$, 
$i\in\{1,\ldots,n\}$ as defined above.
\end{lemma}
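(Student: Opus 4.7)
The plan is to produce an iterated Cauchy integral representation of $f$ on every polydisc contained in $O$, and then differentiate under the integral. Fix an arbitrary $\bz_0 \in O$ and choose radii $r_1,\ldots,r_n > 0$ such that the closed polydisc $P := \prod_{i=1}^n \overline{B(z_{0,i},r_i)}$ is contained in $O$ (possible since $O$ is open). Write $T := \prod_{i=1}^n \partial B(z_{0,i},r_i)$ for its distinguished boundary.

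First I would iterate Cauchy's formula in $\bx_1$, then $\bx_2$, and so on up to $\bx_n$. At each step the partial function restricted to the active variable is holomorphic (by separate holomorphy) and continuous in all remaining arguments (by joint continuity of $f$), so the single-variable Cauchy formula applies. Continuity of $f$ on the compact set $T$ gives an $L^\infty$ bound, which lets Fubini's theorem merge the iterated contour integrals into
$f(\bx) = \frac{1}{(2\pi i)^n}\int_T \frac{f(\zeta)}{\prod_{i=1}^n(\zeta_i - \bx_i)}\, d\zeta_1\cdots d\zeta_n$
for every $\bx$ in the open polydisc.

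Next I would differentiate under the integral sign. Because $T$ is compact, $f$ is bounded on $T$, and for $\bx$ in any compact subset of the open polydisc the integrand and all its $\bx$-derivatives are bounded uniformly in $\zeta\in T$, this is routine. The resulting formula
$\partial_\bx^\alpha f(\bx) = \frac{\alpha!}{(2\pi i)^n}\int_T \frac{f(\zeta)}{\prod_{i=1}^n(\zeta_i - \bx_i)^{\alpha_i+1}}\, d\zeta_1\cdots d\zeta_n,\qquad \alpha!:=\prod_i \alpha_i!,$
holds on the open polydisc for every multi-index $\alpha\in\N_0^n$. The right-hand side is manifestly continuous in $\bx$, and since each factor $(\zeta_i - \bx_i)^{-\alpha_i-1}$ depends holomorphically on $\bx_i$ with the remaining variables fixed, it is also holomorphic in each $\bx_i$ separately. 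Since $\bz_0$ was arbitrary and holomorphy plus continuity are local properties, $\partial_\bx^\alpha f$ is continuous and separately holomorphic on all of $O$.

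The main technical obstacle is the iterative Cauchy step, where one must check at each iteration that the partial function along the current contour is continuous in the remaining free variables (so Cauchy's formula applies fiberwise) and that the combined integrand is jointly continuous on the product contour (so Fubini merges the iterated integrals). Once this integral representation is secured, both continuity and separate holomorphy of every partial derivative $\partial_\bx^\alpha f$ are an immediate consequence of the differentiation-under-the-integral argument.
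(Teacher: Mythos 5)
Your argument is correct, but it takes a genuinely different route from the paper. The paper proceeds by induction on $|\alpha|_1$: it writes $\partial_\bx^\alpha f=\partial_{\bx_i}\partial_\bx^{\alpha_0}f$ with $|\alpha_0|_1=|\alpha|_1-1$, notes that holomorphy in the differentiated variable $\bx_i$ is automatic (derivatives of holomorphic functions are holomorphic), and transfers holomorphy in the remaining variables $\bx_j$, $j\neq i$, by representing $\partial_{\bx_i}\partial_\bx^{\alpha_0}f$ through the \emph{one-variable} Cauchy formula in $\bx_i$ over a small circle, so that only single-variable machinery is needed at each step. You instead establish the full polydisc Cauchy representation of $f$ (essentially Osgood's lemma, using the assumed joint continuity to justify the fiberwise Cauchy formulas and the Fubini merge of the iterated contour integrals) and then differentiate under the integral, obtaining all mixed partials at once together with the explicit formula $\partial_\bx^\alpha f(\bx)=\frac{\alpha!}{(2\pi i)^n}\int_T f(\zeta)\prod_i(\zeta_i-\bx_i)^{-\alpha_i-1}\,d\zeta$. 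Your route relies more heavily on the continuity hypothesis (needed for boundedness on the distinguished boundary and for Fubini), but it buys more: joint analyticity of $f$ and ready-made Cauchy estimates of exactly the kind the paper later exploits in the proof of Lemma~\ref{lem:cov}, whereas the paper's induction is lighter-weight and defers such quantitative bounds to that later argument. Both proofs are complete for the statement as given; just make sure, as you indicate, that the differentiation under the integral is justified by the uniform bounds on compact subsets of the open polydisc, and that separate holomorphy of the resulting integral in each $\bx_i$ is concluded via Morera or a further differentiation under the integral.
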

\begin{proof}
 The result is proved by induction on $|\alpha|_1$.
 Obviously, for $|\alpha|_1=0$, $\partial_\bx^\alpha f=f$ and the statement is true.
 Assume the statement holds for all $|\alpha|_1\leq k$ and choose some 
$\alpha\in\N_0^n$ with $|\alpha|_1=k+1$. Then, we have for some 
$i\in\{1,\ldots,n\}$ and some $\alpha_0\in\N_0^n$ with $|\alpha_0|_1=k$ that
 \begin{align*}
  \partial_\bx^\alpha f =\partial_{\bx_i}\partial_\bx^{\alpha_0}f.
 \end{align*}
Since, $\partial_\bx^{\alpha_0}f$ is holomorphic in $O$ in all variables by the 
induction hypothesis, obviously $\partial_\bx^\alpha f$ is holomorphic in $O$ at 
least in $\bx_i$ (derivatives of holomorphic functions are holomorphic).
To prove the statement for all other variables,
we may employ Cauchy's integral formula to obtain
\begin{align*}
  \partial_\bx^\alpha f(\bx)=\partial_{\bx_i}\partial_\bx^{\alpha_0}f = \frac{1}{2\pi i}\int_{\partial B_\eps(\bx_i)} \frac{ 
\partial_\bx^{\alpha_0}f(\bx_1,\ldots,\bx_{i-1},\bz,\bx_{i+1},\ldots,\bx_n)}{(\bz-\bx_i)^2}\,{\rm d}\bz,
\end{align*}
for some $\eps>0$ with $B_\eps(\bx_i)\subset \C$ being the ball with radius 
$\eps$. The integrand is holomorphic in all variables $\bx_j$, $j\neq i$. Hence, 
we conclude that $\partial_\bx^\alpha f(\bx)$ is holomorphic
in all variables and prove the assertion. 
\end{proof}

The following result is elementary but technical.
\begin{lemma}\label{lem:man}
For $n,p\in\N$, define the set $M:=\set{\bx\in\C^{n}}{{\rm real}(\sum_{i=1}^n \bx_i^p) \leq 0}$. Then, there holds $(\R^n)_+:=\set{\bx\in \R^n\setminus\{0\}}{\bx_i\geq 0}\cap M=\emptyset$ and
\begin{align*}
 {\rm dist}(M,\bx)\geq |\sin(\frac{\pi}{2p})||\bx|\quad\text{for all }\bx\in(\R^n)_+.
\end{align*}
\end{lemma}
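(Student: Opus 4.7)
The first claim is immediate: for any $\bx\in(\R^n)_+$ every $\bx_i^p$ is a non-negative real number with at least one strictly positive (since $\bx\neq 0$), hence ${\rm real}(\sum_i\bx_i^p)=\sum_i\bx_i^p>0$ and $\bx\notin M$.

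For the distance bound my plan is to reduce to the sharp one-variable case. I would first establish the scalar statement: for $a>0$ and $z\in\C$ with ${\rm real}(z^p)\le 0$, one has $|z-a|\ge a\sin(\pi/(2p))$. Writing $z=re^{i\theta}$, the condition $\cos(p\theta)\le 0$ confines $\arg z$ to sectors whose nearest boundary ray to the positive real axis makes the angle $\pi/(2p)$; the foot of the perpendicular from $a$ to this ray lies at distance exactly $a\sin(\pi/(2p))$, with equality attained at $z=a\cos(\pi/(2p))e^{i\pi/(2p)}$.

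To lift this to $n$ variables, fix $\by\in M$, set $\bv:=\by-\bx$, and consider the holomorphic polynomial $F(t):=\sum_{i=1}^n(\bx_i+t\bv_i)^p$ of degree at most $p$ in $t\in\C$. Then $F(0)=\sum\bx_i^p>0$ while ${\rm real}(F(1))\le 0$ by assumption. Expanding
\begin{align*}
F(t)=\sum_{k=0}^p\binom{p}{k}\Big(\sum_i\bx_i^{p-k}\bv_i^k\Big)t^k
\end{align*}
and bounding the inner sums via H\"older's inequality to dominate $F$ on $|t|\le 1$ by the scalar polynomial $(|\bx|+t|\bv|)^p$ would reduce the multi-variable situation to the one-variable geometry of the previous paragraph, applied at $a=|\bx|$: the requirement that $F(1)$ leave the open right half-plane forces $|\bv|\ge\sin(\pi/(2p))|\bx|$.

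The main obstacle is the majorisation step. A naive H\"older application replaces the Euclidean $|\bx|$ by some $\ell_q$-norm with $q\neq 2$, which loses the sharp constant $\sin(\pi/(2p))$. To preserve the sharp factor, I anticipate combining the H\"older estimate with an extremality argument showing that the minimising $\bv$ is essentially a complex rotation of a scalar multiple of $\bx$, so that the optimisation genuinely collapses to the one-dimensional model. That extremal reduction, rather than the elementary one-variable geometry, is where I expect the bulk of the technical work to lie; it should ultimately let one identify the equality case $\by=\cos(\pi/(2p))e^{i\pi/(2p)}\bx$ as the unique (up to conjugation) minimiser.
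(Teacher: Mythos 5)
The easy half (positivity of $\sum_i\bx_i^p$ on $(\R^n)_+$) and your scalar fact (for $a>0$ and ${\rm real}(z^p)\le 0$ one has $|z-a|\ge a\sin(\pi/(2p))$) are fine, but your argument stops exactly where the lemma becomes nontrivial: the passage from one variable to $n$ variables is only announced, not carried out. After introducing $F(t)=\sum_i(\bx_i+t\bv_i)^p$ you do not specify any mechanism by which a coefficientwise H\"older majorisation of $F$ by $(|\bx|+t|\bv|)^p$ turns the hypothesis ${\rm real}(F(1))\le 0$ into the angular information that drives the one-variable geometry; in one dimension the bound comes from the location of the single point $a+v$ relative to the cone $\{|\arg z|<\pi/(2p)\}$, not from a modulus bound on a polynomial in $t$. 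You yourself flag that the sharp-constant extremality step is ``anticipated'' and ``where the bulk of the technical work'' lies, so what you have is a plan with its central step missing. The paper's own proof takes a different and more elementary route that avoids any segment or extremal analysis: it shows $M$ is disjoint from the product cone $C_p^n$ built from the one-dimensional sectors $\{re^{i\phi}:r>0,\ |\phi|<\pi/(2p)\}$ (each coordinate in the closed sector or zero, the vector nonzero), notes $(\R^n)_+\subseteq C_p^n$, and then estimates the distance coordinatewise.

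More seriously, the extremal structure you intend to exploit is not correct, so the plan cannot be completed as described. Take $n=2$, $p=1$, $\bx=(1,0)$: the point $\by=(1/2,-1/2)$ lies in $M$ and $|\bx-\by|=1/\sqrt{2}<1=\sin(\pi/2)\,|\bx|$. Indeed, for $p=1$ the set $M$ is a real half-space of $\C^n$ and ${\rm dist}(M,\bx)=|\bx|_1/\sqrt{n}$, which is strictly smaller than $|\bx|_2$ unless all coordinates of $\bx$ coincide; so the Euclidean bound you are aiming at (with the constant $\sin(\pi/(2p))$) cannot be proved in this generality, and in particular the minimiser is not a complex rotation of a multiple of $\bx$. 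Even in cases where the bound does hold, e.g.\ $p=2$, $\bx=(1,0)$, where the distance equals $\sin(\pi/4)$, there are minimisers such as $\by=(1/2,\,i/2)$ that are not complex multiples of $\bx$, so the claimed ``collapse to the one-dimensional model'' and the uniqueness of the extremiser both fail. Any correct completion has to argue per coordinate (as in the paper's cone construction) or else weaken/restrict the Euclidean form of the estimate (the case $p=1$ and vectors with vanishing or strongly unequal coordinates is precisely where it breaks); this is a structural obstruction, not merely the technical H\"older issue you identified.
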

\begin{proof}
 Let $\bx\in(\R^n)_+$, then we have $\sum_{i=1}^n \bx_i^p>0$ and hence $\bx\notin M$. It is easy to see that the cone 
 $C_p:=\set{r\exp(i\phi)}{r>0,\,\phi\in(-\frac{\pi}{2p},\frac{\pi}{2p})}\subset \C$
 satisfies ${\rm real}(x^p)>0$ for all $x\in C_p$. Thus, we have that 
 \begin{align*}
C_p^n:=\Big(\prod_{i=1}^n(\{0\}\cup C_p)\Big)\setminus \{0\}\subset \C^n
\end{align*}
satisfies $C_p^n\cap M=\emptyset$. 
\begin{figure}
\psfrag{y}{\tiny $\partial C_p$}
\psfrag{x}{\tiny $x$}
\psfrag{0}{\tiny $0$}
\psfrag{the}[cc][cc]{\tiny $\frac{\pi}{2p}$}
\centering
\includegraphics[clip,width=0.5\textwidth]{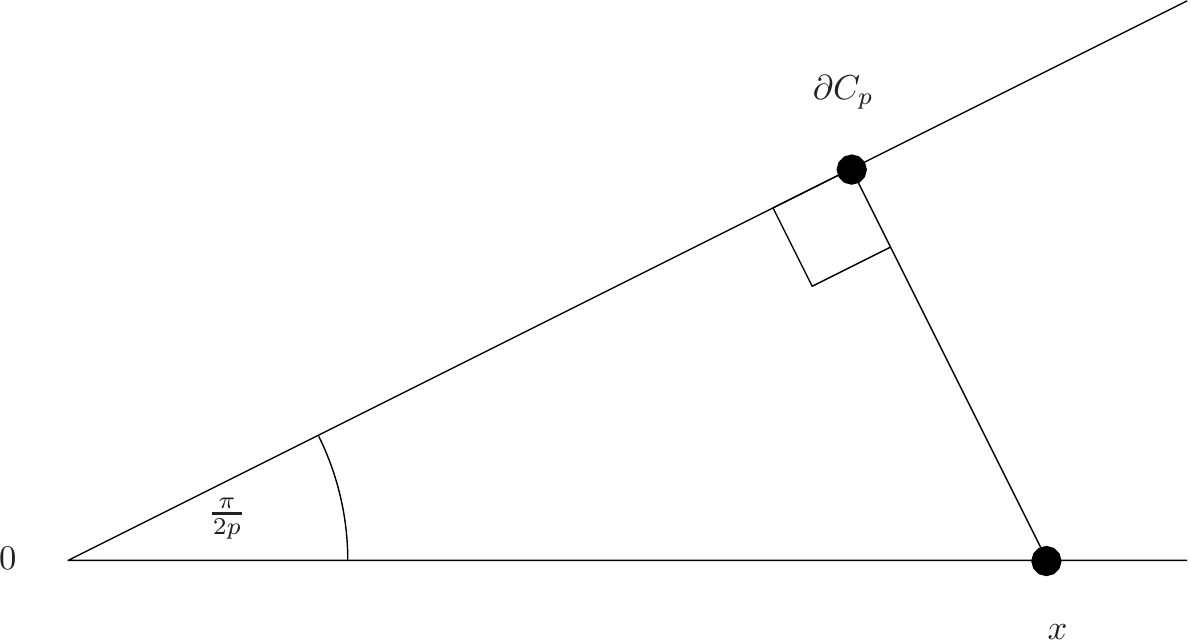} 
\caption{The situation of the proof of Lemma~\ref{lem:man}. The distance between $\partial C_p$ and $x$ is $x\sin(\pi/(2p))$.}
\label{fig:trig}
\end{figure}
Moreover, a simple geometric argument {(see Figure~\ref{fig:trig})} shows that all $x>0$ satisfy
\begin{align*}
 {\rm dist}(x,\partial C_p) = x \sin(\pi/(2p)).
\end{align*}
Since $(\R^n)_+\subseteq C_p^n$, this implies
 \begin{align*}
 {\rm dist}(M,\bx)\geq {\rm dist}(\partial C_p^n,\bx)=\Big(\sum_{i=1}^n \bx_i^2\sin(\pi/(2p))^2\Big)^{1/2}=|\sin(\pi/(2p))||\bx|.
 \end{align*}
This concludes the proof.
\end{proof}

Products of asymptotically smooth functions are again asymptotically smooth. This is shown in the next lemma.
\begin{lemma}\label{lem:prod}
 Given two functions $f,g\colon D\times D\to \R$ which are asymptotically smooth~\eqref{eq:as}. Then, also their product $fg$ satisfies~\eqref{eq:as}.
\end{lemma}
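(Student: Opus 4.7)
The plan is to apply the Leibniz rule and reduce the bound on $\partial_\bx^\alpha\partial_\by^\beta(fg)$ to bounds on the individual factors, where \eqref{eq:as} applies. With $n := |\alpha|_1+|\beta|_1$ and $k := |\alpha'|_1+|\beta'|_1$, I would write
\[
\partial_\bx^\alpha \partial_\by^\beta(fg) = \sum_{\alpha'\le\alpha,\,\beta'\le\beta}\binom{\alpha}{\alpha'}\binom{\beta}{\beta'}\,\partial_\bx^{\alpha'}\partial_\by^{\beta'}f \cdot \partial_\bx^{\alpha-\alpha'}\partial_\by^{\beta-\beta'}g,
\]
and split the sum into \emph{interior} terms ($1\le k\le n-1$, where \eqref{eq:as} bounds both factors) and two \emph{boundary} terms ($k=0$ and $k=n$, where one factor is undifferentiated).

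For each interior term I would use \eqref{eq:as} on $f$ and $g$ separately to obtain
\[
|\partial_\bx^{\alpha'}\partial_\by^{\beta'}f \cdot \partial_\bx^{\alpha-\alpha'}\partial_\by^{\beta-\beta'}g|\le c_1^f c_1^g\,(c_2|\bx-\by|)^{-n}\,k!\,(n-k)!,
\]
where $c_2:=\min(c_2^f,c_2^g)$, and then apply $k!(n-k)!\le n!$ to produce a clean $n!$ factor. For the two boundary terms I would bound the undifferentiated factor by its supremum on $D\times D$ (which is finite for the bounded covariance functions considered here) and invoke \eqref{eq:as} on the other factor; this again yields a summand of the form $C\,n!\,(c_2|\bx-\by|)^{-n}$.

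Summing over all $(\alpha',\beta')$ with $\alpha'\le\alpha, \beta'\le\beta$, the combinatorial weight $\sum_{\alpha',\beta'}\binom{\alpha}{\alpha'}\binom{\beta}{\beta'}=2^n$ combined with the $(n+1)^{2d}$ possible index pairs contributes an extra factor of at most $(n+1)^{2d}\,2^n$. I would absorb this polynomial-times-exponential factor into a single exponential via the estimate $(n+1)^{2d}\,2^n\le D_d\,3^n$, which is valid because $(n+1)^{2d}(2/3)^n$ is bounded for $n\in\N_0$. This gives
\[
|\partial_\bx^\alpha\partial_\by^\beta(fg)|\le \widetilde c_1\,n!\,(\widetilde c_2|\bx-\by|)^{-n}
\]
with $\widetilde c_2 = c_2/3$ and $\widetilde c_1$ depending only on $c_1^f, c_1^g, \|f\|_\infty, \|g\|_\infty$, and $d$.

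The only genuine technical point is the absorption of the polynomial prefactor $(n+1)^{2d}$ into a geometric sequence, at the mild cost of shrinking the constant $c_2$; the remaining steps are routine Leibniz bookkeeping together with the elementary factorial inequality $k!(n-k)!\le n!$. The case $|\alpha|_1+|\beta|_1=0$ is excluded from \eqref{eq:as}, so no subtlety arises from a zero-order baseline.
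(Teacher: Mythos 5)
Your proof is correct and follows essentially the same route as the paper: the multivariate Leibniz rule, the factorial bound $k!\,(n-k)!\le n!$ (the paper instead uses the exact cancellation $\binom{|\alpha|_1}{|\beta|_1}\,k!\,(n-k)!=n!$ after estimating the multi-index binomial by $\binom{|\alpha|_1}{|\beta|_1}$), and absorption of the remaining combinatorial factor ($2^n(n+1)^{2d}$ in your version, $(|\alpha|_1+1)^{2d}$ in the paper's) into the exponential by shrinking $c_2$. Your explicit treatment of the boundary terms $k\in\{0,n\}$ via the supremum of the undifferentiated factor is in fact slightly more careful than the paper, which tacitly applies the derivative bound also in the zero-order case and thereby implicitly assumes $|f|,|g|\le c_1$.
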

\begin{proof} 
To simplify the notation, we consider $f,g$ as functions of one variable $\bz=(\bx,\by)\in D\times D\subset \R^{2d}$.
For multi-indices $\alpha,\beta\in\N^{2d}$, define
\begin{align*}
 \binom{\alpha}{\beta}:=\prod_{i=1}^{2d}\binom{\alpha_i}{\beta_i}.
\end{align*}
Note that there holds $\binom{\alpha}{\beta}\leq \binom{|\alpha|_1}{|\beta|_1}$. This follows from the basic combinatorial fact that the number of possible choices of $\beta_i$ elements out of a set of $\alpha_i$ elements
for all $i=1,\ldots,2d$
is smaller than the number of choices of $|\beta|_1$ elements out of a set of $|\alpha|_1$ elements.

 The Leibniz formula together with the definition of asymptotically smooth function~\eqref{eq:as} show for $\alpha\in\N^{2d}$
 \begin{align*}
  |\partial_\bz^\alpha (fg)|(\bz)&\leq \sum_{\beta\in\N^{2d}_0\atop \beta\leq \alpha}\binom{\alpha}{\beta}|\partial_\bz^{\beta}f|(\bz)|\partial_\bz^{\alpha-\beta} g|(\bz)\\
  &\leq \sum_{\beta\in\N^{2d}_0\atop \beta\leq \alpha}\binom{|\alpha|_1}{|\beta|_1} c_1(c_2|\bx-\by|)^{-|\beta|_1}|\beta|_1!c_1(c_2|\bx-\by|)^{{-|\alpha|_1+|\beta|_1}}(|\alpha|_1-|\beta|_1)!\\
  &\leq\sum_{\beta\in\N^{2d}_0\atop \beta\leq \alpha} c_1^2(c_2|\bx-\by|)^{-|\alpha|_1}|\alpha|_1!\\
  &\leq (|\alpha|_1+1)^{2d} c_1^2(c_2|\bx-\by|)^{-|\alpha|_1}|\alpha|_1!\\
  &\lesssim c_1^2(\widetilde c_2|\bx-\by|)^{-|\alpha|_1}|\alpha|_1!,
 \end{align*}
 where we used $(|\alpha|_1+1)^{2d}\leq (2d\exp(2d))^{|\alpha|_1}$ and $\widetilde c_2=c_2/(2d\exp(2d))$.
This concludes the proof.
\end{proof}

The final lemma of this section proves the concatenations of certain asymptotically smooth functions are asymptotically smooth.
\begin{lemma}\label{lem:circ}
 Let  $g\colon D\times D\to \R$ be asymptotically smooth~\eqref{eq:as} with constants $c_1,c_2>0$.
 \begin{itemize}
  \item[(i)] If $c_g:=\sup_{\bx\in D\times D}g(\bx)<\infty$. Then, $\exp\circ g$ satisfies~\eqref{eq:as} 
 with constants $\widetilde c_1:= \exp(c_g)$ and $\widetilde c_2:= c_2/(2\max\{1,c_1\})$. 
 \item[(ii)] If $g$ satisfies $\partial_\bx^\alpha \partial_\by^\alpha g(\bx,\by)\leq C_g$ for all $\alpha,\beta\in\N_0^d$ and some $C_g<\infty$
 as well as $g(\bx,\by)\geq C_g^{-1}|\bx-\by|$,
 then, $g^{1/q}$ satisfies~\eqref{eq:as}  with $\widetilde \varrho_1=1/2$ and $\widetilde \varrho_2 =C_g^{-1}$ for all $q\in\N$.
 \item[(iii)] If $g$ satisfies the assumptions from {\rm (ii)} and additionally $g(\bx,\by)\geq c_0>0$ for all $\bx,\by\in D$, then $g^{-1/q}$ satisfies~\eqref{eq:as} for all $q\in\N$.
 \end{itemize}
\end{lemma}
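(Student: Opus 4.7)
The plan is to apply Faà di Bruno's formula to each of the three compositions and estimate the resulting sum. For a scalar outer function $f$ and scalar inner $g$ depending on $\bz=(\bx,\by)\in D\times D$,
\[
\partial^\alpha(f\circ g) = \sum_{\pi} f^{(|\pi|)}(g)\prod_{B\in\pi}\partial^{\gamma_B} g,
\]
where $\pi$ runs over set partitions of the $|\alpha|$-element labeled position set carried by $\alpha$, and $\gamma_B$ is the multi-index recording the labels inside the block $B$. The number of such partitions is the Bell number $B_{|\alpha|}\le |\alpha|!$, which keeps the combinatorial overhead harmless.

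For (i), take $f=\exp$, so that $|f^{(r)}(g)|\le\exp(c_g)=\widetilde c_1$ uniformly on $D\times D$. The asymptotic smoothness of $g$ bounds each factor $|\partial^{\gamma_B} g|$ by $c_1(c_2|\bx-\by|)^{-|\gamma_B|}|\gamma_B|!$. Multiplying across the blocks of $\pi$ and summing over $\pi$ extracts an overall factor $(c_2|\bx-\by|)^{-|\alpha|}$ together with a combinatorial sum $\sum_\pi c_1^{|\pi|}\prod_B|\gamma_B|!$. A Leibniz-rule induction (or a direct regrouping by block-size profile) bounds that combinatorial sum by $(2\max\{1,c_1\})^{|\alpha|}|\alpha|!$, and absorbing the factor $(2\max\{1,c_1\})^{|\alpha|}$ into a reduction of $c_2$ to $\widetilde c_2=c_2/(2\max\{1,c_1\})$ yields the claim. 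An equivalent route is induction on $|\alpha|$ using $\partial_i\exp(g)=(\partial_i g)\exp(g)$ together with the Leibniz rule.

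For (ii), take $f(x)=x^{1/q}$. From $|\binom{1/q}{r}|\le 1/(qr)$ one gets $|f^{(r)}(x)|\le\tfrac{(r-1)!}{q}\,x^{1/q-r}$; since $1/q-r\le 0$ for $r\ge 1$, the lower bound $g\ge C_g^{-1}|\bx-\by|$ yields $g^{1/q-r}\le C_g^{r-1/q}|\bx-\by|^{1/q-r}$, while the uniform hypothesis $|\partial^{\gamma_B} g|\le C_g$ contributes $C_g^{|\pi|}$ across the blocks. Summing over $\pi$ and consolidating powers of $|\bx-\by|$ and $C_g$ against the Bell-number count gives the stated form with $\widetilde\varrho_1=1/2$, $\widetilde\varrho_2=C_g^{-1}$. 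Part~(iii) is analogous with $f(x)=x^{-1/q}$: here $|\binom{-1/q}{r}|\le 1$ so $|f^{(r)}(x)|\le r!\,x^{-1/q-r}$, and the additional lower bound $g\ge c_0>0$ keeps $g^{-1/q-r}\le c_0^{-1/q-r}$ uniformly over the whole of $D\times D$. Combined with $|\partial^{\gamma_B}g|\le C_g$, Faà di Bruno then produces an analytic-type estimate $|\partial^\alpha(g^{-1/q})|\le C\,A^{|\alpha|}|\alpha|!$, which implies~\eqref{eq:as} with $\widetilde c_2=1/(A\,\diam(D))$ because $D$ is bounded.

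The main technical obstacle is the combinatorial bookkeeping: in (i) one must establish the identity that absorbs $\sum_\pi c_1^{|\pi|}\prod_B|\gamma_B|!$ into $(2\max\{1,c_1\})^{|\alpha|}|\alpha|!$, and in (ii)/(iii) one must correctly track the exponents of $|\bx-\by|$ and $C_g$ contributed by $f^{(r)}(g)$ and by the product over blocks so that the final exponents line up with the stated $\widetilde\varrho_i$. None of these steps is conceptually deep; the work lies in organizing the sum and invoking the appropriate elementary bound at each step.
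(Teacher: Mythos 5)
Your skeleton is the same as the paper's: apply Fa\`a di Bruno's formula over set partitions to $\exp\circ g$, $g^{1/q}$, $g^{-1/q}$, bound each block factor by asymptotic smoothness (in (i)) or by the uniform bound $C_g$ (in (ii)/(iii)), bound the outer derivative, and then control the remaining weighted sum over partitions. In (i) the combinatorial inequality you assert, $\sum_{\pi}c_1^{|\pi|}\prod_{B\in\pi}|\gamma_B|!\leq(2\max\{1,c_1\})^{n}\,n!$ with $n=|\alpha|_1$, is correct, and your hinted route works: grouping by block-size profile gives $\sum_{\pi}\prod_B|\gamma_B|! = n!\sum_{\sum_j jm_j=n}\prod_j 1/m_j!\leq e\,2^n n!$ (equivalently, encode each partition-into-lists by a permutation plus a set of cut positions to get $2^{n-1}n!$). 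The paper obtains the same bound differently, by recognizing $\sum_{P}\prod_{S\in P}|S|!$ as $\partial_x^n\big(\exp((1-x)^{-1})\big)(0)$ up to a factor and invoking Cauchy's integral formula; your elementary counting is a legitimate substitute there.

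The genuine gap is in (ii)/(iii). There the blocks contribute only $C_g^{|\pi|}$, but the outer derivative contributes $(|\pi|-1)!$ (resp.\ $|\pi|!$), and your justification --- ``summing over $\pi$ \ldots against the Bell-number count $B_n\leq n!$'' --- does not close the argument: bounding the sum by (number of partitions)$\,\times\,$(largest term) yields, up to factors $C^n$, a quantity of size $n!\,(n-1)!$, which is \emph{not} of the admissible form $C^n n!$ and cannot be absorbed into $\widetilde\varrho_2$. What you actually need is the weighted estimate $\sum_{\pi}|\pi|!=\sum_{k}S(n,k)\,k!\leq C^n n!$ (ordered Bell/Fubini numbers); this is precisely the nontrivial step the paper supplies by writing $\sum_P|P|!=\partial_x^n\big((2-e^x)^{-1}\big)(0)$ and applying Cauchy's formula on $|x|=1/2$, and it can equally be proved by the same block-size-profile regrouping you use in (i). Once that inequality is in place, the rest of your (ii)/(iii) argument (the bound $|f^{(r)}(x)|\leq\frac{(r-1)!}{q}x^{1/q-r}$, the lower bounds $g\geq C_g^{-1}|\bx-\by|$ resp.\ $g\geq c_0$, and the use of boundedness of $D$ to consolidate the powers of $|\bx-\by|$ --- a step the paper performs only implicitly) goes through as in the paper.
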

\begin{proof}
 To simplify the notation, we consider $g$ as a function of one variable $\bz=(\bx,\by)\in D\times D\subset \R^{2d}$.
 Define the set of all partitions of $\{1,\ldots,n\}$ as 
 \begin{align*}
 \Pi(n):=\set{P\subseteq 2^{\{1,\ldots,n\}}}{{S\cap S^\prime=\emptyset\text{ or }S=S^\prime\text{ for all }}S,S^\prime\in P,\, \bigcup_{S\in P} S=\{1,\ldots,n\}}.
 \end{align*}
 {For a multi-index $\alpha\in\N^{2d}$, we define $\widetilde \alpha\in \{1,\ldots,2d\}^n$ by $\widetilde \alpha_i=j$ for all
 $1+\sum_{k=1}^{j-1}\alpha_k\leq i\leq \sum_{k=1}^j\alpha_k$ and all $1\leq j\leq 2d$ (e.g., $\alpha=(2,3,1,1)$ yields $\widetilde \alpha=(1,1,2,2,2,3,4)$). With $n=|\alpha|_1$ and some $S\in P\in \Pi(n)$, we define 
 \begin{align*}
  \partial_{\bz}^S g(\bz)=\Big(\prod_{i\in S}\partial_{\bz_{\widetilde \alpha_i}}\Big)g(\bz).
 \end{align*}
(the definition implies $\partial_{\bz}^{\{1,\ldots,n\}}g(\bz)=\partial_\bz^\alpha g(\bz)$.)}
 With those definitions and given a function $f\colon \R\to\R$, Fa\`a di Bruno's formula reads for a multi-index $\alpha\in\N^{2d}$
 \begin{align}\label{eq:faa}
  \partial_{\bz}^\alpha(f\circ g)(\bz) = \sum_{P\in \Pi(|\alpha|_1)} (\partial_x^{|P|} f)\circ g(\bz)\prod_{S\in P}{\partial_{\bz}^S} g(\bz).
 \end{align} 
 For~(i), Fa\`a di Bruno's formula~\eqref{eq:faa} {and $\partial_x^{|P|}\exp =\exp $} show for all multi indices $\alpha\in\N^{2d}$ with $n=|\alpha|_1$ that
 \begin{align*}
  \partial_\bx^\alpha (\exp\circ g)(\bz){ = \sum_{P\in \Pi(n)} \exp\circ g(\bz)\,\prod_{S\in P}\partial_{\bz}^Sg(\bz)}.
 \end{align*}
 The definition of asymptotically smooth~\eqref{eq:as} and {$\norm{g}{L^\infty(D\times D)}=c_g$} imply
\begin{align*}
 |\partial_\bx^\alpha (\exp\circ g(\bz))|&\leq \exp(c_g)\sum_{P\in \Pi(n)}\prod_{S\in P}c_1(c_2|\bx-\by|)^{-|S|}|S|!\\
 &{\leq \exp(c_g)\sum_{P\in \Pi(n)}(c_2|\bx-\by|)^{-\sum_{S\in P}|S|}\,c_1^{|P|}\prod_{S\in P}|S|!}\\
 &\leq \exp(c_g)
 \max\{1,c_1\}^n(c_2|\bx-\by|)^{-n}\sum_{P\in \Pi(n)}\prod_{S\in P}|S|!.
\end{align*}
{With $f(x):=(1-x)^{-1}$, $x\in\R\setminus\{1\}$, we have $\partial_x^k f(x) = k!(1-x)^{-1-k}$.} Hence, the last factor can be written, using Fa\`a di Bruno's formula again, as
\begin{align*}
 \sum_{P\in \Pi(n)}\prod_{S\in P}|S|!= \sum_{P\in \Pi(n)}\exp\circ f(0)\prod_{S\in P}\partial_\bx^{|S|} f(0) = \partial_x^n(\exp\circ f)(0).
\end{align*}
As the function $h(x):=\exp((1-x)^{-1})$, $x\in\C$ is holomorphic at least for $|x|<1$, Cauchy's integral formula shows
\begin{align*}
 |\partial_x^n h(0)|=\frac{n!}{2\pi}\Big|\int_{|z|=1/2} \frac{h(z)}{z^{n+1}}\,{\rm d}\bz\Big|\leq n! 2^{n} \exp(2).
\end{align*}
Altogether, we conclude the proof of~(i) by
\begin{align*}
 |\partial_\bz^\alpha (\exp\circ g(\bz))|&\leq \exp(c_g) \Big(\frac{c_2}{2\max\{1,c_1\}}\,|\bx-\by|\Big)^{-n}n!.
\end{align*}

For~(ii), Fa\`a di Bruno's formula~\eqref{eq:faa} shows again for $q>1$
\begin{align*}
 |\partial_\bz^\alpha ( g^{1/q})(\bz)| &\leq  \sum_{P\in \Pi(n)} |P|!|g(\bz)|^{1/q-|P|}\,\prod_{S\in P}C_g\leq C_g^n |\bx-\by|^{-|n|}\sum_{P\in \Pi(n)} |P|!,
\end{align*}
where we used $f(x):=x^{1/q}$ and $|\partial_x^{|P|}f(x)|=|(1/q)(1/q-1)(1/q-2)\cdots (1/q-|P|+1)||x|^{1/q-|P|}\leq |P|!|x|^{1/q-|P|}$ as well as the boundedness assumption on the derivatives of $g$ from~(ii).
With $r(x):=\exp(x)-1$ {and $f(x):=(1-x)^{-1}$}, $x\in\R$, the last factor satisfies
\begin{align*}
 \sum_{P\in \Pi(n)} |P|! = \sum_{P\in \Pi(n)} (\partial_x^{|P|} f)\circ r(0)\prod_{S\in P}(\partial_x^{|S|}r)(0)=\partial_x^n(f\circ r)(0).
\end{align*}
The function $h(x):=f\circ r(x)= (2-\exp(x))^{-1}$, $x\in\C$ is holomorphic at least for $|x|\leq 1/2$. As above, this implies
\begin{align*}
 \partial_x^n(f\circ r)(0)\leq n!2^n
\end{align*}
and thus concludes the proof of~(ii).

For~(iii), we conclude the proof as for~(ii) by use of the estimate $g(z)^{{-}1/q-|P|}\leq c_0^{-1-n}$.
\end{proof}

At last, we are ready to prove Lemma~\ref{lem:cov} {which states that the covariance functions from~\eqref{eq:matern} and~\eqref{eq:nonstat} are asymptotically smooth~\eqref{eq:as}.}
\begin{proof}[Proof of Lemma~\ref{lem:cov}]
To see~\eqref{eq:as}, consider $\varrho(\cdot,\cdot)$ from~\eqref{eq:matern}. We define for complex variables $\bx_i,\by_i\in\C$
\begin{align*}
 d(\bx-\by)= \Big(\sum_{i=1}^d (\bx_i-\by_i)^p\Big)^{1/p}\in\C,
\end{align*}
{whenever $(\cdot)^{1/p}$ is defined in $\C$.}
and consider $\widetilde \varrho(\bx,\by)$ which is $\varrho(\bx,\by)$ from~\eqref{eq:matern} but with $d(\bx-\by)$ instead of $|\bx-\by|_p$.
With the notation of Lemma~\ref{lem:man}, the above sum has positive real part in $O:=\set{(\bx,\by)\in\C^{2d}}{\bx-\by\notin M}$. Thus, the function
$(\bx,\by)\mapsto d(\bx-\by)$ is holomorphic in each variable in $O$. Since for $a>0$, $\bx\mapsto \bx^\mu K_\mu(ax)$ is a holomorphic function on $\C\setminus(\R_-\cup \{0\})$,
and $d(\bx-\by)$ has positive real part, we deduce that $(\bx,\by)\mapsto  \widetilde \varrho(\bx,\by)$ is holomorphic in each variable in $O$.
Thus, Lemma~\ref{lem:holo} 
proves that $\partial_\bx^\alpha \partial_\by^\beta \widetilde \varrho(\bx,\by)$ is holomorphic in $O$ in 
all variables $\bx_i$ and $\by_i$. Therefore, Cauchy's integral formula applied in all 
variables shows
\begin{align*}
 \partial_\bx^\alpha \partial_\by^\beta \widetilde \varrho(\bx,\by)
 &=\frac{\prod_{i=1}^d\alpha_i ! 
\beta_i!}{(2\pi i)^{2d}} 
 \int_{\partial B_{\bx,1}}\ldots \int_{\partial B_{\bx,d}}\int_{\partial 
B_{\by,1}}\ldots \int_{\partial 
B_{\by,d}}\frac{\widetilde \varrho(s,t)}{\prod_{i=1}^d(s_i-\bx_i)^{\alpha_i+1}(t_i-\by_i)^{\beta_i+1}}\,{\rm d}t\,{\rm d}s.
\end{align*}
The balls $B_{\bx,i}$ and $B_{\by,i}$ have to be chosen such that $\prod_{i=1}^d 
B_{\bx,i}\times \prod_{i=1}^d B_{\by,i}\subset O$. With Lemma~\ref{lem:man}, and for $(\bx,\by)\in \R^{2d}$ such that $\bx-\by\in (\R^n)_+$ (note that Lemma~\ref{lem:man} implies $(\bx,\by)\in O$), this can be achieved by setting
$B_{\bx,i}:= B_\eps(\bx_i)$ and 
$B_{\by,i}:=B_\eps(\by_i)$ with $\eps:=\sin(\pi/(2p))|\bx-\by|/(2d+1)$.
From this, we obtain the estimate
\begin{align}\label{eq:firstest}
|\partial_\bx^\alpha \partial_\by^\beta \varrho(\bx,\by)|= |\partial_\bx^\alpha \partial_\by^\beta \widetilde \varrho(\bx,\by)|\lesssim \frac{\alpha!\beta! 
(2d+1)^{|\alpha|_1+|\beta|_1}}{|\bx-\by|^{|\alpha|_1+|\beta|_1}}\max_{(s,t)\in D\times D} 
|\widetilde\varrho(s,t)|
\end{align}
for all $(\bx,\by)\in \R^{2d}$ such that $\bx-\by\in (\R^n)_+$, where the first equality follows from $d(\bx-\by)=|\bx-\by|_p$ for all $\bx-\by\in (\R^n)_+$. 
To remove the restriction $\bx-\by\in(\R^n)_+$, consider $b\in \{0,1\}^d$ and define the function
\begin{align*}
 F_b(\bx,\by):=((-1)^{b_1}\bx_1,\ldots,(-1)^{b_d}\bx_d,(-1)^{b_1}\by_1,\ldots,(-1)^{b_d}\by_d).
\end{align*}
Since we consider $\varrho(\cdot,\cdot)$ from~\eqref{eq:matern}, there holds $\varrho\circ F_b=\varrho$. Since for all $\bx,\by\in \R^{d}$ with $\bx\neq \by$, there exists some $b\in \{0,1\}^d$ such that
$(\bx_b,\by_b):=F_b(\bx,\by)$ satisfies $\bx_b-\by_b\in (\R^n)_+$, we prove~\eqref{eq:firstest} for all $\bx,\by\in \R^d$ with $\bx\neq \by$.
Finally, the fact $\alpha!\beta! \leq |\alpha+\beta|_1!$, proves that $\varrho(\cdot,\cdot)$ from~\eqref{eq:matern} is asymptotically smooth~\eqref{eq:as}.

Next, consider the covariance function $\varrho(\cdot,\cdot)$ from~\eqref{eq:nonstat}. By definition $\boldsymbol{\Sigma}_\bx$ is continuous on $\overline{D}$. Hence, ${\rm det}(\boldsymbol{\Sigma}_\bx)\geq c_0>0$ for all $\bx\in D$.
The assumption~\eqref{eq:boundderiv} implies that also ${\rm det}(\boldsymbol{\Sigma}_\bx)$ has bounded derivatives in the sense of~\eqref{eq:boundderiv} (since ${\rm det}(\boldsymbol{\Sigma}_\bx)$ is a polynomial in the
matrix entries of $\boldsymbol{\Sigma}_\bx$). Thus, Lemma~\ref{lem:circ}
shows that the functions $(\bx,\by)\mapsto {\rm det}(\boldsymbol{\Sigma}_\bx)^{1/4}$, $(\bx,\by)\mapsto {\rm det}(\boldsymbol{\Sigma}_\by)^{1/4}$, and $(\bx,\by)\mapsto {\rm det}(\boldsymbol{\Sigma}_\bx+\boldsymbol{\Sigma}_\by)^{-q}$, $q\in\{1/2,1\}$ satisfy~\eqref{eq:as}.
With $\boldsymbol{\Sigma}_\bx$, also all functions $\widetilde {\boldsymbol{\Sigma}}_\bx$ defined by considering only sub-matrices of $\boldsymbol{\Sigma}_\bx$ satisfy~\eqref{eq:boundderiv}. Thus, Cramer's rule and Lemma~\ref{lem:prod} show
that the map $(\bx,\by)\mapsto ((\boldsymbol{\Sigma}_\bx+\boldsymbol{\Sigma}_\by)^{-1})_{i,j}$ for all $i,j\in\{1,\ldots,d\}$ satisfies~\eqref{eq:as}. From this, we conclude (again with Lemma~\ref{lem:prod}), that
$(\bx,\by)\mapsto (\bx-\by)^T(\boldsymbol{\Sigma}_\bx+\boldsymbol{\Sigma}_\by)^{-1}(\bx-\by)$ as sum and product of asymptotically smooth functions is asymptotically smooth~\eqref{eq:as}.  Finally, Lemma~\ref{lem:circ} shows that $\varrho(\bx,\by)$ satisfies~\eqref{eq:as}.
This concludes the proof.
\end{proof}

\section{Proof of Proposition~\ref{prop:h2}}\label{app:proph2}
The following lemmas state facts about the $H^2$-matrix block partitioning, 
which are well-known but cannot be found explicitly in the literature.

\begin{lemma}\label{lem:bb}
Under Assumption~\ref{ass:approxu}, there exists a constant $C_{B}>0$ which depends only 
on $d$, $C_{\rm u}$, $D$, and $B_{X_{\rm root}}$
such that all $X\in \T_{\rm cl}$ satisfy
\begin{subequations}\label{eq:shape}
\begin{align}\label{eq:shape1}
 {\rm diam}(B_X)^d &\leq C_B |B_X|,\\
 {C_B^{-1}N|B_X|-1}&{\leq  |X|\leq 1+C_BN|B_X|},\label{eq:shape2}\\
 |B_X|&=  2^{-{\rm level}(X)}|B_{X_{\rm root}}|.\label{eq:shape3}
\end{align} 
\end{subequations}
Moreover, all $(X,Y)\in\T$ satisfy
\begin{align}\label{eq:eq}
 C_{BB}^{-1} {\rm diam}(B_X)\leq {\rm diam}(B_Y)\leq C_{BB} {\rm diam}(B_X),
\end{align}
where $C_{BB}>0$ depends only on $C_B$, $C_{\rm leaf}$, and $D$.
\end{lemma}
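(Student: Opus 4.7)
The plan is to dispatch the five inequalities in order of increasing effort. The identity~\eqref{eq:shape3} is immediate from the bisection rule: each son box has exactly half the volume of its parent, so $|B_X|=2^{-\ell}|B_{X_{\rm root}}|$ follows by induction on $\ell={\rm level}(X)$. For the shape-regularity~\eqref{eq:shape1}, I would argue that longest-edge bisection preserves a bounded aspect ratio: if $B_X$ has sorted side lengths $a_1\ge\cdots\ge a_d$, then each bisection halves only the longest edge, so the ratio $\max_i a_i/\min_i a_i$ cannot exceed $2$ times the aspect ratio of $B_{X_{\rm root}}$ at any stage. This yields ${\rm diam}(B_X)^d \le (\sqrt{d}\max_i a_i)^d \le C_B(\min_i a_i)^d \le C_B|B_X|$ with $C_B$ depending on $d$ and on $B_{X_{\rm root}}$.

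The bounds~\eqref{eq:shape2} exploit Assumption~\ref{ass:approxu}. For the upper bound, the separation estimate makes the open balls of radius $\tfrac12 C_{\rm u}^{-1}N^{-1/d}$ around points of $X$ pairwise disjoint and contained in an inflated box of volume at most $\prod_i(a_i+C_{\rm u}^{-1}N^{-1/d})$; expanding this product and using~\eqref{eq:shape1} to absorb lower-order boundary terms into a factor of $|B_X|$ produces $|X|\le 1+C_B N|B_X|$, the offset handling the case $|B_X|\lesssim N^{-1}$. For the lower bound, the fill-distance estimate implies that any sub-box of side at least $2C_{\rm u} N^{-1/d}$ contained in $B_X$ must contain a point of $\NN$; a packing of such sub-boxes into the inscribed box shrunk by $C_{\rm u} N^{-1/d}$ yields $|X|\ge C_B^{-1}N|B_X|-1$, where again the subtracted $1$ absorbs the regime in which the shrunk box is empty.

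The main obstacle is~\eqref{eq:eq}, because the block-cluster tree may descend in only one of the two clusters when the other is already a leaf, potentially opening a gap between levels. I would resolve this using the leaf/non-leaf dichotomy provided by~\eqref{eq:shape2}. Any cluster leaf $X$ has $|X|\le C_{\rm leaf}$ and hence $|B_X|\le C_B(C_{\rm leaf}+1)/N$; conversely, any non-root cluster $Y$ has a parent $Y'$ with $|Y'|>C_{\rm leaf}$, so the upper bound $|Y'|\le 1+C_B N|B_{Y'}|$ forces $|B_Y|=|B_{Y'}|/2\ge (C_{\rm leaf}-1)/(2C_B N)\ge 1/(2C_B N)$, using $C_{\rm leaf}\ge 2$. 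If both components of $(X,Y)\in\T$ are non-leaves, the block-tree construction splits them simultaneously, so they share the same level and $|B_X|=|B_Y|$; if one of them (say $X$) is a leaf, the two estimates above bound $|B_X|/|B_Y|$ by a constant depending only on $C_B$ and $C_{\rm leaf}$. Combining with~\eqref{eq:shape1} via ${\rm diam}(B_X)\le(C_B|B_X|)^{1/d}$ and the trivial $|B_Y|^{1/d}\le{\rm diam}(B_Y)$ produces~\eqref{eq:eq} with the claimed dependence of $C_{BB}$; the dependence on $D$ is absorbed into $|B_{X_{\rm root}}|$ to handle the degenerate case when the tree never splits.
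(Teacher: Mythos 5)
Your handling of \eqref{eq:shape1}--\eqref{eq:shape3} is essentially the paper's own argument (aspect-ratio preservation under longest-edge bisection, and packing/covering with balls of radius $\simeq N^{-1/d}$ from Assumption~\ref{ass:approxu}), and those parts are fine. The gap is in \eqref{eq:eq}, in the mixed case where exactly one cluster of the pair, say $X$, is a leaf of $\T_{\rm cl}$ and the other, $Y$, is not. Your two estimates give $|B_X|\le C_B(C_{\rm leaf}+1)/N$ (because $X$ is a leaf) and $|B_Y|\gtrsim 1/N$ (because $Y$ is a non-leaf, or non-root), which bounds the ratio $|B_X|/|B_Y|$ \emph{only from above}; via \eqref{eq:shape1} this yields ${\rm diam}(B_X)\lesssim{\rm diam}(B_Y)$ but not the reverse inequality that \eqref{eq:eq} also demands. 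For that you would need an upper bound on $|B_Y|$, and for a non-leaf cluster your cardinality bounds supply only a lower bound: $|Y|>C_{\rm leaf}$ does nothing to exclude, on the basis of \eqref{eq:shape2} alone, that $Y$ is a huge shallow cluster paired with a deep leaf $X$. So the sentence ``the two estimates above bound $|B_X|/|B_Y|$ by a constant'' delivers only half of what is needed; it is two-sided only when $Y$ is itself a leaf.

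What closes the gap is a structural property of the block-cluster tree, and it is precisely the step the paper's proof supplies: a level difference inside a pair $(X,Y)\in\T$ can only be created at ancestor blocks where one component is a cluster leaf, since at every other block both components are split simultaneously, and the component that stalls is always the leaf. Hence if $Y$ is not a leaf, then along the path from $(X_{\rm root},X_{\rm root})$ to $(X,Y)$ the second component is refined at every step, so ${\rm level}(Y)\ge{\rm level}(X)$, and \eqref{eq:shape3} gives $|B_Y|\le|B_X|$, which is exactly the missing direction; combined with your $|B_X|\lesssim 1/N\lesssim |B_Y|$ and \eqref{eq:shape1} this yields \eqref{eq:eq}. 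You already use this mechanism in the both-non-leaf case (``splits them simultaneously''); you need its one-sided extension in the mixed case. With that added, your proof is correct and is in substance the same as the paper's, which combines this level-ordering observation with the fact that leaf clusters satisfy $|B_X|\simeq C_{\rm leaf}/N$ to bound the level gap by a constant -- the same bookkeeping you perform with volumes.
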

\begin{proof}
The first estimate~\eqref{eq:shape1} follows from the fact that always the 
longest edge of a bounding box is halved. This means that the ratio $L_{\rm max}/L_{\rm min}$
of the maximal and the minimal side length of a bounding box $B_X$ stays bounded in 
terms of the corresponding ratio for $B_{X_{\rm root}}$. Therefore, we have
\begin{align*}
 {\rm diam}(B_X)^d\leq (\sqrt{d} L_{\rm max})^d\lesssim d^{d/2} L_{\rm min}^d\leq d^{d/2}|B_X|.
\end{align*}

{To see the second estimate~\eqref{eq:shape2}, consider a given bounding box $B$ with 
side lengths $L_1,\ldots, L_d$. Due to Assumption~\ref{ass:approxu} the balls $Q_\bx$ with centre $\bx$ and radius $C_{\rm u}^{-1}N^{-1/d}/2$ for all $\bx\in\NN$ do not overlap.
All balls $Q_\bx$ with $\bx\in B$ are containted in a box with sidelengths $L_{\rm max}+C_{\rm u}^{-1}N^{-1/d}$.
Thus, the number $m_B$ of $\bx\in\NN$ contained in $B$ can be bounded by
 \begin{align*}
  m_B\lesssim \frac{(L_{\rm max}+C_{\rm u}^{-1}N^{-1/d})^d}{C_{\rm u}^{-d}/(N2^d)}\leq \frac{dL_{\rm max}^d}{C_{\rm u}^{-d}/(N2^d)}+d2^d.
 \end{align*}
 Since $m_B\leq 1$ if $L_{\rm max}< C_{\rm u}^{-1}N^{-1/d}/2$ and since $L_{\rm max}^d\simeq |B|$, we may improve the estimate to
 \begin{align*}
  m_B\leq 1+ C_B|B|N,
 \end{align*}
where $C_B$ depends only on $d$ and $C_{\rm u}$. On the other hand, Assumption~\ref{ass:approxu} implies that any ball with radius $C_{\rm u}N^{-1/d}$ contains at least one point $\bx\in\NN$.
Since each such ball fits inside a box with sidelength $2C_{\rm u}N^{-1/d}$, we obtain
\begin{align*}
 m_B\gtrsim \Big\lfloor\frac{L_{\rm min}^d}{2^dC_{\rm u}^{-d}/N}\Big\rfloor
\end{align*}
points of $\NN$. This allows us to estimate $m_B\geq C_{B}^{-1}|B|N-1$ and conclude~\eqref{eq:shape2}.
The estimate~\eqref{eq:shape3} follows from the fact $|B_X|=|B_{X^\prime}|/2$ 
for all $X\in{\rm sons}(X^\prime)$.
 For~\eqref{eq:eq}, we observe with~\eqref{eq:shape2} that
 \begin{align*}
  \frac{|X|-1}{N}{\lesssim |B_X|\lesssim  \frac{|X|+1}{N}}
 \end{align*}
for all $X\in\T_{\rm cl}$ with hidden constants depending only on $C_B$. Thus, with~\eqref{eq:shape3}, we have for all $X\in \T_{\rm cl}$ with $X\in{\rm sons}(X^\prime)$
that
\begin{align*}
  2^{-{\rm level}(X)}\geq  2^{-{\rm level}(X^\prime)}/2\simeq |B_{X^\prime}|\gtrsim C_{\rm leaf}/N.
\end{align*}
Moreover, if additionally ${\rm sons}(X)=\emptyset$, we have even $2^{-{\rm level}(X)}\simeq |B_x|\lesssim C_{\rm leaf}/N$.}
By definition of the block-tree $\T$, a level difference between $X$ and $Y$ for $(X,Y)\in\T$ can only happen, if ${\rm sons}(X)=\emptyset$ or ${\rm sons}(Y)=\emptyset$.
Assume ${\rm sons}(X)=\emptyset$. In this case, we have ${\rm level}(Y)\geq {\rm level}(X)$. Then, we have
\begin{align*}
 2^{-{\rm level}(X)}\simeq C_{\rm leaf}/N\lesssim |Y|/N\simeq 2^{-{\rm level}(Y)},
\end{align*}
with hidden constants depending only on $C_B$ and $D$. This implies ${\rm level}(Y)\leq {\rm level}(X) + C$ for some constant $C>0$ which depends only on $C_{\rm leaf}$, $D$, and $C_B$ from~\eqref{eq:shape}.
From this we derive~\eqref{eq:eq} by use of~\eqref{eq:shape}.
\end{proof}

\begin{lemma}\label{lem:distH2}
 Given the definition of $\T_{\rm far}$ in Section~\ref{sec:h2}, there exists a constant $C>0$ such that all 
$(X,Y)\in\T_{\rm far}$ satisfy
\begin{align}\label{eq:dist}
  C^{-1}{\rm diam}(B_{X})\leq {\rm dist}(B_{X},B_Y)\leq C\,{\rm 
diam}(B_{X}).
 \end{align}
\end{lemma}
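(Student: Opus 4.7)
The proof has two parts: a lower bound that follows immediately from the admissibility condition, and an upper bound that requires inspecting the parent of $(X,Y)$ in the block-cluster tree.

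For the lower bound, since $(X,Y)\in\T_{\rm far}$ the boxes $B_X, B_Y$ satisfy the admissibility condition~\eqref{eq:adm}, which rearranges directly to
\[
 {\rm dist}(B_X, B_Y) \;\geq\; \eta^{-1}\max\{{\rm diam}(B_X),{\rm diam}(B_Y)\}\;\geq\;\eta^{-1}\,{\rm diam}(B_X).
\]
This gives the left inequality in~\eqref{eq:dist} with constant $\eta$.

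For the upper bound, the plan is to compare with the parent $(X',Y')\in\T$ of $(X,Y)$. Such a parent always exists: the root $(X_{\rm root},X_{\rm root})$ has ${\rm dist}(B_{X_{\rm root}},B_{X_{\rm root}})=0$ but positive diameter (since $D$ is a Lipschitz domain of positive measure), so it cannot itself lie in $\T_{\rm far}$. Moreover, because the son rule in Section~\ref{sec:h2} sets ${\rm sons}(X',Y')=\emptyset$ whenever $(X',Y')$ satisfies~\eqref{eq:adm}, the fact that $(X,Y)$ was produced as a son forces $(X',Y')$ to be \emph{in}admissible, i.e.,
\[
 {\rm dist}(B_{X'},B_{Y'})\;\leq\;\eta^{-1}\max\{{\rm diam}(B_{X'}),{\rm diam}(B_{Y'})\}.
\]
In each of the three son cases we have $B_X\subseteq B_{X'}$ and $B_Y\subseteq B_{Y'}$, so the triangle inequality applied to points realizing ${\rm dist}(B_{X'},B_{Y'})$ yields
\[
 {\rm dist}(B_X,B_Y)\;\leq\;{\rm diam}(B_{X'})+{\rm dist}(B_{X'},B_{Y'})+{\rm diam}(B_{Y'}).
\]

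It remains to bound the right-hand side by ${\rm diam}(B_X)$ up to a constant. The estimate~\eqref{eq:eq} of Lemma~\ref{lem:bb} applied to $(X',Y')\in\T$ gives ${\rm diam}(B_{Y'})\leq C_{BB}\,{\rm diam}(B_{X'})$. For the comparison ${\rm diam}(B_{X'})\lesssim{\rm diam}(B_X)$, the argument splits: if $X=X'$ (the case where only the $Y$-component was refined) this is trivial; otherwise $B_X$ is obtained from $B_{X'}$ by halving along its longest edge, which, combined with the bounded aspect ratio~\eqref{eq:shape1}, shrinks the diameter by at most a fixed factor depending only on $d$. Putting these together bounds all three summands by a multiple of ${\rm diam}(B_X)$ and proves the upper bound in~\eqref{eq:dist}.

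The main subtlety is the verification that the halving procedure yields ${\rm diam}(B_{X'})\leq c\,{\rm diam}(B_X)$ uniformly; this is where the shape-regularity encoded in~\eqref{eq:shape1} of Lemma~\ref{lem:bb} is essential, since without a bound on the aspect ratio a single halving along a very long edge could in principle shrink the diameter arbitrarily.
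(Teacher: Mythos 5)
Your proof is correct and takes essentially the same route as the paper: the lower bound is immediate from admissibility of $(X,Y)$, and the upper bound comes from the inadmissibility of the parent block $(X',Y')$ together with the triangle inequality and the comparability of ${\rm diam}(B_X)$, ${\rm diam}(B_{X'})$, ${\rm diam}(B_{Y'})$ (the paper gets this last comparability from the level/measure relations \eqref{eq:shape1}--\eqref{eq:shape3} and \eqref{eq:eq} of Lemma~\ref{lem:bb} rather than from the halving geometry directly, but this is the same idea). Only your closing remark is slightly inaccurate: halving the longest edge can shrink the diameter by at most a factor $2\sqrt{d}$ even without the aspect-ratio bound, since the longest side length is at least ${\rm diam}(B_{X'})/\sqrt{d}$, so invoking \eqref{eq:shape1} there is harmless but not essential.
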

\begin{proof}
By Lemma~\ref{lem:bb}, we have
\begin{align*}
  \max\{{\rm diam}(B_X),{\rm diam}(B_Y)\}\simeq 2^{-{\rm level}(X)/d}.
\end{align*}
For $(X,Y)\in{\rm sons}(X^\prime,Y^\prime)$, we obtain additionally
\begin{align*}
 {\rm dist}(B_{X^\prime},B_{Y^\prime})+2^{-{\rm level}(X^\prime)/d}\gtrsim {\rm dist}(B_X,B_Y).
\end{align*}
 By definition of the block-partitioning, for $(X,Y)\in\T_{\rm far}$ there holds 
that $B_X,B_Y$ satisfy~\eqref{eq:adm}
 and $B_{X^\prime},B_{Y^\prime}$ do not satisfy~\eqref{eq:adm}. Altogether, this implies
 \begin{align*}
  {\rm dist}(B_X,B_Y)\lesssim {\rm dist}(B_{X^\prime},B_{Y^\prime})+2^{-{\rm level}(X^\prime)/d}\lesssim \Big(\frac1\eta+1\Big) 2^{-{\rm level}(X^\prime)/d}\lesssim \max\{{\rm diam}(B_X),{\rm diam}(B_Y)\},
 \end{align*}
where we used $|{\rm level}(X^\prime)-{\rm level}(X)|\leq 1$. This concludes the proof.
%  
%  
% We distinguish two cases: 
% First, let
%  \begin{align}\label{eq:assu}
%  \max\{{\rm diam}(B_X),{\rm diam}(B_Y)\}\leq \eta/2 \,{\rm dist}(B_X,B_Y).
%  \end{align}
% With ${\rm dist}(B_{X^\prime},B_{Y^\prime})\geq {\rm dist}(B_X,B_Y)-{\rm diam}(B_X)-{\rm diam}(B_Y)$, we 
% conclude
%  \begin{align*}
%  \max\{{\rm diam}(B_X),{\rm diam}(B_Y)\}\leq \eta \,{\rm 
% dist}(B_{X^\prime},B_Y).
%  \end{align*}
%  Since $B_{X^\prime},B_Y$ do not satisfy~\eqref{eq:adm}, this implies 
%  \begin{align*}
%  {\rm diam}(B_X)&\leq \max\{{\rm diam}(B_X),{\rm diam}(B_Y)\}\leq \eta \,{\rm 
% dist}(B_{X^\prime},B_Y)\\
%  &<\max\{{\rm diam}(B_{X^\prime}),{\rm diam}(B_Y)\}={\rm diam}(B_{X^\prime}).
%  \end{align*}
%  By definition, we have ${\rm diam}(B_{X})\leq {\rm diam}(B_{X^\prime})\leq 
% 2{\rm diam}(B_{X})$. This shows
%  \begin{align*}
%   {\rm diam}(B_{X})\leq \eta \,{\rm dist}(B_{X^\prime},B_Y)<2\,{\rm 
% diam}(B_{X}).
%  \end{align*}
% Again, using ${\rm dist}(B_{X^\prime},B_Y)\geq {\rm dist}(B_X,B_Y)-{\rm 
% diam}(B_X)$, we prove~\eqref{eq:dist}.
% 
% Second, let~\eqref{eq:assu} be false. With~\eqref{eq:adm}, this implies
% \begin{align*}
%  \eta/2\,{\rm dist}(B_X,B_Y)<\max\{{\rm diam}(B_X),{\rm diam}(B_Y)\}\leq 
% \eta\,{\rm dist}(B_X,B_Y).
% \end{align*}
% With~\eqref{eq:eq}, we prove~\eqref{eq:dist} immediately.
\end{proof}

The following lemma gives some basic facts about tensorial 
{Chebychev}-interpolation (see, e.g.,~\cite[Section 4.4]{h2mat})
\begin{lemma}\label{lem:ceb}
 Let $f\colon B\to \R$ for an axis parallel box $B\subseteq \R^{2d}$ such that 
$\partial_j^k f\in L^\infty(B)$ for all $j=1,\ldots,d$ and all $0\leq k\leq p$.
 Then,  the tensorial {Chebychev}-interpolation operator of order $p$, $I_p\colon 
C(B) \to \PP^p(B)$ satisfies
 \begin{align}\label{eq:Ierr}
  \sup_{\bx\in B}|I_p f(\bx)-f(\bx)|\leq 2d\Lambda_p^{2d-1}4 \frac{4^{-p}}{(p+1)!}{\rm 
diam}(B)^p\sum_{i=1}^{2d}\norm{\partial_{\bx_i}^{p+1} f}{L^\infty(B)},
 \end{align}
 where
\begin{align}\label{eq:lambda}
 \Lambda_p:=\sup_{f\in C([-1,1])}\frac{\norm{I_p^\bx 
f}{L^\infty([-1,1])}}{\norm{f}{L^\infty([-1,1])}}\leq \frac{2}{\pi}\log(p+1)+1
\end{align}
is the operator norm of the one dimensional {Chebychev} interpolation operator
\end{lemma}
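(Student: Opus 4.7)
The plan is to reduce the multidimensional interpolation error to one-dimensional errors using a standard tensor-product telescoping argument, combined with the classical one-dimensional Chebyshev error estimate and the known asymptotic of the Lebesgue constant.

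First, I would observe that the tensorial Chebyshev interpolation operator factorises as a commuting product $I_p = I_p^1 \circ I_p^2 \circ \cdots \circ I_p^{2d}$, where $I_p^j$ denotes one-dimensional Chebyshev interpolation of degree $p$ in the $j$-th coordinate (with the remaining coordinates frozen). Because operators acting on different coordinates commute, we may rearrange products freely. Using the telescoping identity
\begin{align*}
f - I_p f \;=\; \sum_{j=1}^{2d}\bigl(I_p^1 \cdots I_p^{j-1}\bigr)\bigl(\id - I_p^j\bigr) f,
\end{align*}
we split the $2d$-dimensional error into a sum of $2d$ univariate error terms, each preceded by a finite composition of one-dimensional interpolation operators.

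Second, I would bound each summand in the supremum norm. The operator norm of a one-dimensional Chebyshev interpolation with frozen transverse variables coincides with the classical Lebesgue constant $\Lambda_p$ from~\eqref{eq:lambda}, so
\begin{align*}
\bigl\|(I_p^1 \cdots I_p^{j-1})\bigl(\id - I_p^j\bigr)f\bigr\|_{L^\infty(B)} \;\leq\; \Lambda_p^{j-1}\,\bigl\|\bigl(\id - I_p^j\bigr)f\bigr\|_{L^\infty(B)}.
\end{align*}
For the univariate remainder, I would freeze the $2d-1$ other coordinates and apply the classical Chebyshev interpolation error bound on an interval of length equal to the $j$-th edge of $B$, which is at most $\diam(B)$. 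This yields a bound of order $\tfrac{4^{-p}}{(p+1)!}\diam(B)^{p+1}\norm{\partial_{\bx_j}^{p+1} f}{L^\infty(B)}$ uniformly in the frozen coordinates. Summing the geometric-type factors $\sum_{j=1}^{2d}\Lambda_p^{j-1}\leq 2d\,\Lambda_p^{2d-1}$ (since $\Lambda_p \geq 1$) and collecting terms delivers the stated inequality~\eqref{eq:Ierr}, up to absorbing small numeric constants.

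Third, the bound $\Lambda_p \leq \tfrac{2}{\pi}\log(p+1)+1$ is the classical estimate of the Lebesgue constant for Chebyshev interpolation at the nodes used here and may be cited from the standard approximation-theory literature.

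The main obstacle is bookkeeping rather than mathematical: verifying carefully that the one-dimensional interpolations in distinct coordinates commute so that the telescoping sum has the clean form above, and pinning down the numeric constant in the univariate Chebyshev error that lifts to the factor $4$ appearing in the statement. Once these two points are handled, the argument is elementary given the ingredients.
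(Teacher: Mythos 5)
Your proposal is correct and follows essentially the same route as the paper's proof: a coordinate-wise telescoping of $f-I_pf$ into $2d$ univariate error terms, stability of the transverse interpolations via the Lebesgue constant $\Lambda_p$, the classical one-dimensional Chebyshev remainder estimate, and an affine scaling from $[-1,1]^{2d}$ to the general box. The only (cosmetic) difference is that you apply the $\Lambda_p^{j-1}$ stability factor to the operators acting on the univariate error $(\id-I_p^j)f$, whereas the paper applies the one-dimensional estimate to the partially interpolated function and then commutes $\partial_{\bx_i}$ with the interpolations in the other variables; both variants land on the stated bound up to the same mild bookkeeping of constants and the power of ${\rm diam}(B)$ that the paper itself glosses over.
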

\begin{proof}
 It is well-known that the one dimensional {Chebychev} interpolation operator 
$I_p^{\bx}$ satisfies the error estimate for any $f\in C([-1,1])$
\begin{align*}
 \norm{u-I_p^\bx f}{L^\infty([-1,1])}\leq 4 \frac{2^{-p}}{(p+1)!}\norm{\partial^{(p+1)} 
f}{L^\infty([-1,1])}
\end{align*}
with an operator norm given in~\eqref{eq:lambda}.
Consider $B:=[-1,1]^{2d}$. Then, there holds with $I_p^{\bx_i}$ denoting 
interpolation in the $\bx_i$-variable $i\in\{1,\ldots,2d\}$
\begin{align*}
 |f-I_p f|&=|f-I_p^{\bx_1}f + I_p^{\bx_1}f - I_p^{\bx_2}I_p^{\bx_1} f +\ldots - I_p f|\\
 &\leq \sum_{i=1}^{2d} 4 \frac{2^{-p}}{p!}\norm{\partial_{\bx_i}^p 
I_p^{\bx_1}(I_p^{\bx_2}\ldots I_p^{\bx_{i-1}})f}{L^\infty(B)}\leq
 \sum_{i=1}^{2d}\Lambda_p^{i-1}4 \frac{2^{-p}}{p!}\norm{\partial_{\bx_i}^{(p+1)} 
f}{L^\infty(B)}\\
 &\leq 2d\Lambda_p^{2d-1}4 \frac{2^{-p}}{p!}\norm{\partial_{\bx_i}^{(p+1)} 
f}{L^\infty(B)}.
\end{align*}
Since, for any affine transformation $A\colon \R^{2d}\to \R^{2d}$, we have 
$I_p(f\circ A)= I_p(f)\circ A$, a standard scaling argument concludes the 
proof. 
\end{proof}

\begin{proof}[Proof of Proposition~\ref{prop:h2}]

We start by proving that $\lambda_{\rm min}(\boldsymbol{C}_p)>0$ if $p$ satisfies~\eqref{eq:ppos}.
To that end, note
\begin{align*}
 \lambda_{\rm min}(\boldsymbol{C}_p)&=\min_{\bz\in\R^N\setminus\{0\}} \frac{(\boldsymbol{C}_p\bz)^T\bz}{|\bz|}\geq \min_{\bz\in\R^N\setminus\{0\}} \frac{(\boldsymbol{C}\bz)^T\bz}{|\bz|}
 -\sup_{\bz\in\R^N\setminus\{0\}} \frac{((\boldsymbol{C}_p-\boldsymbol{C})\bz)^T\bz}{|\bz|}\\
 &\geq \lambda_{\rm min}(\boldsymbol{C}) - \norm{\boldsymbol{C}-\boldsymbol{C}_p}{2}\geq \lambda_{\rm min}(\boldsymbol{C}) - \norm{\boldsymbol{C}-\boldsymbol{C}_p}{F},
\end{align*}
since the Frobenius norm is an upper bound for the spectral norm. By use of~\eqref{eq:h2err} (which is proved below) and~\eqref{eq:ppos}, we conclude $ \lambda_{\rm min}(\boldsymbol{C}_p)>0$.

To see~\eqref{eq:h2err}, we first estimate the maximal depth of the tree $\T_{\rm cl}$. 
With~\eqref{eq:shape2}--\eqref{eq:shape3}, we obtain $C_{\rm leaf}\leq |X| 
\lesssim 2^{-{\rm level}(X)}$ for all $X\in\T_{\rm cl}$ with ${\rm sons}(X)\neq 
\emptyset$.
 Thus, there holds
\begin{align*}
\max_{X\in\T_{\rm cl}}{\rm level}(X)\lesssim \log(| \NN|).
\end{align*}
Second, we bound the so-called sparsity constant
\begin{align*}
 C_{\rm sparse}&:= \max_{X\in \T_{\rm cl}}\Big(|\set{Y\in \T_{\rm cl}}{(X,Y)\in \T_{\rm 
near}\cup\T_{\rm far}}|\\
&\qquad+|\set{Y\in \T_{\rm cl}}{(Y,X)\in \T_{\rm near}\cup\T_{\rm 
far}}|\Big).
\end{align*}
{The $H$-matrix case can be found in~\cite[Lemma~4.5]{hmatorig}. For the $H^2$-matrix case,} the combination of~\eqref{eq:eq} and~\eqref{eq:dist} (from Lemma~\ref{lem:distH2}) shows that $(X,Y)\in 
\T_{\rm far}$ only if $B_Y$ touches the (hyper-) annulus with center $B_X$ and 
radii $C^{-1}{\rm diam}(B_X)$ and $C{\rm diam}(B_X)$.
By comparing the volumes of this annulus and of $B_Y$ and using the fact that 
all the bounding boxes are disjoint, we see that the number of $Y$ such that 
$(X,Y)\in \T_{\rm far}$ is bounded in terms of $C$ and 
the constants in~\eqref{eq:shape}.

For $Y\in\T_{\rm cl}$ such that $(X,Y)\in\T_{\rm near}$, we have 
with~\eqref{eq:shape}--\eqref{eq:eq}
\begin{align*}
 {\rm diam}(B_X)\simeq \max\{{\rm diam}(B_X),{\rm diam}(B_Y)\}> \eta \,{\rm 
dist}(B_X,B_Y).
\end{align*}
Again, comparing the volumes of the ball with radius ${\rm diam}(B_X)$ and of 
$B_Y$, we see that the number of $Y$ such that $(X,Y)\in\T_{\rm near}$ is 
bounded in terms of the constants in~\eqref{eq:shape}.
Altogether, we bound $C_{\rm sparse}$ uniformly in terms of the constants of 
Lemma~\ref{lem:bb}.
Now,~\cite[Lemma~3.38]{h2mat} proves the estimate for storage requirements 
and~\cite[Theorem~3.42]{h2mat} proves the estimate for matrix-vector 
multiplication.

It remains to prove the error estimate (see also~\cite[Section~4.6]{h2mat} for the integral operator case). To that end, note that since the near 
field $\T_{\rm near}$ is stored exactly, there holds
\begin{align*}
 \norm{\boldsymbol{C}-\boldsymbol{C}_p}{F}^2 = \sum_{(X,Y)\in\T_{\rm far}} \norm{\boldsymbol{C}|_{I(X)\times I(Y)} - 
V^X M^{XY} (W^Y)^T}{F}^2.
\end{align*}
Given, $(i,j)\in I(X)\times I(Y)$, we have with the interpolation operator $I_p$ 
from Lemma~\ref{lem:ceb} and~\eqref{eq:as}
\begin{align*}
 |\boldsymbol{C}_{ij}-(\boldsymbol{C}_p)_{ij}|&=\big|\varrho(\bx_i,\bx_j)-\sum_{n,m=1}^{p^d} \varrho(q_n^X,q_m^Y) 
L_n^X(\bx_i)L_m^Y(\bx_j)\big|=|\varrho(\bx_i,\bx_j)-(I_p c)(\bx_i,\bx_j)|\\
 &\lesssim (\log(p)+1)^{2d-1} \frac{4^{-p}}{(p+1)!}{\rm diam}(B_X\times 
B_Y)^p\sum_{i=1}^d\big(\norm{\partial_{\bx_i}^{(p+1)} 
c}{L^\infty(B)}+\norm{\partial_{\by_i}^{(p+1)} c}{L^\infty(B_X\times B_Y)}\Big)\\
 &\lesssim(\log(p)+1)^{2d-1} \frac{4^{-p}}{(p+1)!}{\rm diam}(B_X\times 
B_Y)^p(c_2{\rm dist}(B_X,B_Y))^{-p}p!.
\end{align*}
With the admissibility condition~\eqref{eq:adm}, we get
\begin{align*}
 {\rm diam}(B_X\times B_Y)\lesssim \max\{{\rm diam}(B_X),{\rm diam}(B_Y)\}\leq 
\eta {\rm dist}(B_X,B_Y)
\end{align*}
 and hence
\begin{align*}
 |\boldsymbol{C}_{ij}-(\boldsymbol{C}_p)_{ij}|&\lesssim (\log(p)+1)^{2d-1}\big(\frac{\eta}{4c_2}\big)^{p}.
\end{align*}
The combination of the above estimates concludes the proof.

\end{proof}

\bibliographystyle{plain}
\bibliography{literature}

\begin{thebibliography}{10}

\bibitem{ap1}
I.~Babu{\v{s}}ka, B.~Andersson, P.~J. Smith, and K.~Levin.
\newblock Damage analysis of fiber composites. {I}. {S}tatistical analysis on
  fiber scale.
\newblock {\em Comput. Methods Appl. Mech. Engrg.}, 172(1-4):27--77, 1999.

\bibitem{h2mat}
Steffen B{\"o}rm.
\newblock {\em Efficient numerical methods for non-local operators}, volume~14
  of {\em EMS Tracts in Mathematics}.
\newblock European Mathematical Society (EMS), Z\"urich, 2010.

\bibitem{circ2}
Grace Chan and Andrew~T.A. Wood.
\newblock Algorithm as 312: An algorithm for simulating stationary gaussian
  random fields.
\newblock {\em Journal of the Royal Statistical Society: Series C (Applied
  Statistics)}, 46(1):171--181, 1997.

\bibitem{circ1}
C.~R. Dietrich and G.~N. Newsam.
\newblock Fast and exact simulation of stationary {G}aussian processes through
  circulant embedding of the covariance matrix.
\newblock {\em SIAM J. Sci. Comput.}, 18(4):1088--1107, 1997.

\bibitem{schwab}
J.~D{\"o}lz, H.~Harbrecht, and Ch. Schwab.
\newblock Covariance regularity and h-matrix approximation for rough random
  fields.
\newblock {\em Numerische Mathematik}, pages 1--27, 2016.

\bibitem{ap2}
I.~Elishakoff, editor.
\newblock {\em Whys and hows in uncertainty modelling}, volume 388 of {\em CISM
  Courses and Lectures}.
\newblock Springer-Verlag, Vienna, 1999.
\newblock Probability, fuzziness and anti-optimization.

\bibitem{monotone}
Andreas Frommer.
\newblock Monotone convergence of the {L}anczos approximations to matrix
  functions of {H}ermitian matrices.
\newblock {\em Electron. Trans. Numer. Anal.}, 35:118--128, 2009.

\bibitem{flow}
I.G. Graham, F.Y. Kuo, D.~Nuyens, R.~Scheichl, and I.H. Sloan.
\newblock Quasi-{M}onte {C}arlo methods for elliptic {PDEs} with random
  coefficients and applications.
\newblock {\em Journal of Computational Physics}, 230(10):3668 -- 3694, 2011.

\bibitem{hmatorig}
Lars Grasedyck and Wolfgang Hackbusch.
\newblock Construction and arithmetics of {$H$}-matrices.
\newblock {\em Computing}, 70(4):295--334, 2003.

\bibitem{hmatrices}
Wolfgang Hackbusch.
\newblock {\em Hierarchical matrices: algorithms and analysis}, volume~49 of
  {\em Springer Series in Computational Mathematics}.
\newblock Springer, Heidelberg, 2015.

\bibitem{harbrecht}
Helmut Harbrecht, Michael Peters, and Markus Siebenmorgen.
\newblock Efficient approximation of random fields for numerical applications.
\newblock {\em Numer. Linear Algebra Appl.}, 22(4):596--617, 2015.

\bibitem{cov}
D.~Higdon, J.~Swall, and J.~Kern.
\newblock Non-stationary spatial modeling.

\bibitem{sqrtm}
Nicholas~J. Higham.
\newblock Computing real square roots of a real matrix.
\newblock {\em Linear Algebra Appl.}, 88/89:405--430, 1987.

\bibitem{sqit1}
Nicholas~J. Higham.
\newblock Stable iterations for the matrix square root.
\newblock {\em Numer. Algorithms}, 15(2):227--242, 1997.

\bibitem{sqit2}
Charles Kenney and Alan~J. Laub.
\newblock Rational iterative methods for the matrix sign function.
\newblock {\em SIAM J. Matrix Anal. Appl.}, 12(2):273--291, 1991.

\bibitem{h2KL}
B.~N. Khoromskij, A.~Litvinenko, and H.~G. Matthies.
\newblock Application of hierarchical matrices for computing the
  {K}arhunen-{L}o\`eve expansion.
\newblock {\em Computing}, 84(1-2):49--67, 2009.

\bibitem{rational}
Igor Moret.
\newblock Rational {L}anczos approximations to the matrix square root and
  related functions.
\newblock {\em Numer. Linear Algebra Appl.}, 16(6):431--445, 2009.

\bibitem{matrixsq}
Bernhard~A. Schmitt.
\newblock Perturbation bounds for matrix square roots and pythagorean sums.
\newblock {\em Linear Algebra and its Applications}, 174:215 -- 227, 1992.

\end{thebibliography}
\end{document}